\definecolor{bulgarianrose}{rgb}{0.28, 0.02, 0.03}
\definecolor{chocolate}{rgb}{0.48, 0.25, 0.0}
\definecolor{berry}{rgb}{0.78, 0.31, 0.39}
\tikzset{
    >=stealth',
    punkt/.style={
           rectangle,
           rounded corners,
           draw=black, very thick,
           text width=6.5em,
           minimum height=2em,
           text centered},
    pil/.style={->,shorten <=0pt,shorten >=6pt,}
}
\newtheorem{theoremw}{Theorem}[section]
\newtheorem{lemma}[theoremw]{Lemma}
\newtheorem{proposition}[theoremw]{Proposition}
\newtheorem{def+prop}[theoremw]{Definition+Proposition}
\newtheorem{theorem}[theoremw]{Theorem}
\newtheorem{conjecture}[theoremw]{Conjecture}
\newtheorem{definition}[theoremw]{Definition}
\newtheorem{remark}[theoremw]{Remark}
\theoremstyle{nonumberplain}
\newtheorem{proof}{Proof}
\newcommand{\N}{\mathbb{N}}
\newcommand{\Z}{\mathbb{Z}}
\newcommand{\Prob}{\mathbb{P}}
\newcommand{\Pa}{\mathsf{P}}
\newcommand{\E}{\mathbb{E}}
\newcommand{\GW}{\mathsf{GW}}
\newcommand{\GWm}{\mathsf{GW^{mult}}}
\newcommand{\AGW}{\mathsf{AGW}}
\newcommand{\MR}{\mathsf{MoveRoot}}
\newcommand{\SRW}{\mathsf{SRW}}
\newcommand{\BMC}{\mathrm{BMC}}
\newcommand{\FM}{\mathrm{FM}}
\newcommand{\connect}{\bullet \hspace{-4pt}- \hspace{-4pt}\bullet}
\newcommand{\fmm}{\mathsf{FM}}
\newcommand{\ST}{\mathsf{ST}}
\newcommand{\PER}{\mathsf{PER}}
\newcommand{\GWbb}{{\GW}_{\mathsf{bg}}}
\newcommand{\tb}{\mathsf{b}}
\newcommand{\tg}{\mathsf{g}}
\newcommand{\tbr}{\mathsf{b_r}}
\newcommand{\les}{\mathsf{es}}
\newcommand{\lbs}{\mathsf{bs}}
\newcommand{\ls}{\mathsf{s}}
\newcommand{\lno}{\mathsf{n}}
\newcommand{\pa}{\mathsf{p}}
\newcommand{\f}{\mathsf{f}}
\newcommand{\T}{\mathbb{T}}
\newcommand{\Tr}{\mathsf{T}}
\title{On transience of frogs on Galton--Watson trees}
\author{Sebastian M\"uller, Gundelinde Maria Wiegel }
\newcommand*{\SwitchToOpenAny}{\csname @openrightfalse\endcsname}
\newcommand*{\SwitchToOpenRight}{\csname @openrighttrue\endcsname}
\begin{document}
\maketitle

\begin{abstract}
We consider a interacting particle system, known as the \textit{frog model}, on infinite Galton--Watson trees allowing offspring 0 and 1. The system starts with one awake particle (frog) at the root of the tree and a random number of sleeping particles at the other vertices. Awake frogs move according to simple random walk on the tree and as soon as they encounter sleeping frogs, those will wake up and move independently according to simple random walk. The frog model is called transient if there are almost surely only finitely many particles returning to the root. In this paper we prove a 0--1-law for transience of the frog model and show the existence of a transient phase for certain classes of Galton--Watson trees.

\noindent \textit{Keywords} Frog model, branching Markov chain, recurrence and transience

\noindent \textit{AMS 2010 Subject Classification} 60K35, 60J10, 60J85
\end{abstract}

\section{Introduction} \label{introduction}
The frog model $\FM(X,\eta,\Pa)$ is a random interacting particle system, consisting of three parts: a graph $X$ with a dedicated root, an i.i.d.\ configuration of sleeping frogs on each vertex according to a distribution $\nu$, and a path measure $\Pa$ describing the movement of the frogs.
We identify the graph with its vertex set and assume that the mean number of frogs $\bar{\eta}$ is finite. 

The model starts by definition with one awake frog at the root $o$ of the graph $X$ and i.i.d.\ sleeping particles according to $\eta$ at the other vertices. The awake frogs move independently on the graph with respect to $\Pa$. When a vertex with sleeping frogs is visited for the first time, the sleeping frogs at this vertex wake up and start to move according to $\Pa$ independently of the other frogs. The different frog models can vary in the underlying graph, the initial distribution of the sleeping frogs (deterministic or random) and the path measure of the awake frogs. Unless it is not specified otherwise, we assume that the frogs move according to simple random walk. We write from now on $\SRW$ instead of $\Pa$, and shorten the notation from $\FM(X,\eta,\SRW)$ to $\FM(X,\eta)$. More precisely,  for $v,w\in X$ we consider the transition probabilities $p(v,w)=\frac{1}{deg(v)}$ if $v$ is a neighbour of $w$, and $0$, otherwise.

In 1999 the frog model was originally introduced as ``egg model" in \cite{telcs} and later on Rick Durrett established the name ``frog model''. One main point of interest since its introduction was studying the recurrence and transience of the frog model. 
Let  $\fmm:=\fmm_{X}:=\SRW \times \eta$ denote the probability measure on paths of all frogs (following the dynamics of a  SRW) given by choosing an independent and identically distributed initial frog configuration according to $\eta$ on the graph  $X$.
Moreover, we define the random variable
\[\nu:= \# \, \text{of visits  to the root} ,\]
which is the aggregated number of visits to the root in the frog model.  We define recurrence and transience in the following way:
\begin{definition} \label{def:trans+rec}
Let $X$ be a graph with a dedicated root $o$. The frog model $ \FM(  X,\eta)$ is called {transient} if 
\[\fmm[\nu <\infty]=1\, ,\]
that is, there are $\fmm$-almost surely only finitely many visits to the root. Otherwise the frog model is called {recurrent}.
\end{definition}

Studying transience and recurrence of the frog model is only interesting when the single random walk is transient. The first result concerning the question about recurrence was in the aforementioned article \cite{telcs}, where Telcs and Wormald  showed that $\FM(\Z^d,\delta_1,\SRW)$ is recurrent for all $d\in \N$. Later Gantert and Schmidt showed conditions for recurrence for the frog model with drift on the integers in \cite{gantertschmidt}. This was generalized to higher dimensions and a drift in the direction of one axis by D\"obler and Pfeifroth  \cite{doblerpfeiffroth} and  D\"obler {\it et al.}~\cite{gantertweidner2018}.

In 2002,  Alves, Machado and Popov \cite{popovphase} studied the frog model on trees with the modification, that the frogs can die with a certain probability $p$ in each step. Let $p_c$ denote the smallest $p$ such that the frog model survives with positive probability. In \cite{popovphase} they are prove in which cases there exists a phase transition, that is $0<p_c<1$, on homogeneous trees and integer lattices. Moreover, they have proven phase transitions between transience and recurrence with respect to the survival probability. In 2005 there was the first improvement of the upper bound of $p_c$ by Lebensztayn, Machado and Popov \cite{Lebensztayn2005}. Recently, Lebensztayn and Utria improved the result again in \cite{Lebensztayn2019} and proved an upper bound for $p_c$ on biregular trees in \citep{Lebensztaynbiregular}. Another modification of the frog model was considered by Deijfen, Hirscher and Lopes in  \cite{deijfenhirscher} and by Deijfen and Rosengren in \cite{deijfenrosengren}. These two papers work on a two-type frog model performing lazy random walk. They show that two populations of frogs on $\Z_d$ can coexist under certain conditions on the path measure of the frogs. Moreover, the coexistence of the frog model does not depend on the shape of the initially activated sets and their frog configuration.

The question if $\FM(T_{d+1},\delta_1,\SRW)$ on the homogeneous tree, or $d+1$-regular tree, $T_{d+1}$  is recurrent or transient remained open for quite some time. In 2017 Hoffmann, Johnson and Junge could show in \cite{onefrogpersite}, that $\FM(T_{d+1},\delta_1,\SRW)$ is recurrent for $d=2$ and transient for $d\geq 5$. This result was extended by Rosenberg \cite{rosenberg} showing that the alternating tree  $T_{3,2}$  with offspring $3$ and $2$ is recurrent.  Studying the frog model on trees was continued by modifying the frog configuration $(\eta(x))_{x\in T_{d+1}}$ to $pois(\mu)$-distributed frogs. Hoffmann, Johnson and Junge proved in \cite{poissonfrogsone} the existence of a critical parameter $\mu_c$, bounded by $C d < \mu_c(d) < C' d \log d$ with $C,C'$ constants, such that $\FM(T_{d+1},pois(\mu),\SRW)$ is recurrent for $\mu > \mu_c$, and transient for $\mu< \mu_c$. Johnson and Junge improved the bounds to $0.24d \leq \mu_c(d) \leq 2.28d$ for sufficiently large $d$ in \cite{poissonfrogstwo}. 

The subtlety of the question of recurrence and transience is also reflected in a  result by Johnson and Rolla \cite{JoRo:18}. In fact,  transience and recurrence are sensitive not just to the expectation of the frogs  but to the entire distribution of the frogs. This is in contrast to closely related models like branching random walk and activated random walk.

Very recently, Michelen and Rosenberg proved in \cite{RosenbergMichelen} the existence of a phase transition between transience and recurrence on Galton--Watson trees. This was done for trees of at least offspring two. In this paper we want to answer an open question which appeared in \cite{RosenbergMichelen}, and extend their result. We will prove the existence of a transient phase for supercritical Galton--Watson trees with bounded offspring but also allowing offspring 0 and 1.  As in the references above we assume that the initial distribution is random according to a distribution $\eta$ with finite first moment. We start with showing a 0--1-law for transience. 
\begin{theorem} \label{01law}
Let $\GW$ be the measure of a Galton--Watson tree and $\Tr$ a realization. Then, 
\[\GW[ \ \FM(  \Tr,\eta) \, \text{is transient} \,| \, \Tr \text{ is infinite} ]\in \{0,1\}.\]
\end{theorem}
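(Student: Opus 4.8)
The plan is to deduce the statement from a $0$--$1$ law for inherited properties of Galton--Watson trees. Since for a fixed realization $\Tr$ the event $\{\FM(\Tr,\eta)\text{ is transient}\}$ is by definition the deterministic statement $\fmm[\nu<\infty]=1$, transience is a $\{0,1\}$-valued measurable function of the tree alone. I would then consider the class of trees
\[
\A := \{\Tr : \Tr \text{ is finite}\}\cup\{\Tr : \Tr \text{ is infinite and } \FM(\Tr,\eta)\text{ is transient}\}.
\]
If $\A$ can be shown to be \emph{inherited}, i.e.\ that $\Tr\in\A$ if and only if every child subtree $\Tr_{v}$ rooted at a child $v$ of the root lies in $\A$, then the branching property yields $\GW(\A)=\sum_{k}p_{k}\,\GW(\A)^{k}=f(\GW(\A))$, where $f$ is the offspring generating function. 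As the process is supercritical with $p_{1}<1$, the only fixed points of $f$ in $[0,1]$ are the extinction probability $q<1$ and $1$, so $\GW(\A)\in\{q,1\}$. Writing $\GW(\A)=q+(1-q)\,\GW[\FM(\Tr,\eta)\text{ transient}\mid \Tr\text{ infinite}]$ then forces the conditional probability to be $0$ or $1$, which is the assertion.

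The crux is therefore the recursion: for an infinite tree $\Tr$ with root $o$ and children $v_{1},\dots,v_{k}$,
\[
\FM(\Tr,\eta)\text{ is transient}\ \Longleftrightarrow\ \FM(\Tr_{v_{i}},\eta)\text{ is transient for every $i$ with $\Tr_{v_{i}}$ infinite}
\]
(finite child subtrees lie in $\A$ automatically, so they do not appear on the right). First I would record the robustness facts that make the two sides comparable: transience is unaffected by finite modifications of the tree and of the initial configuration near the root -- in particular by adding or deleting the edge $ov_{i}$, which only changes $\deg(v_{i})$, and by moving the starting frog from $o$ to $v_{i}$. This is precisely the type of invariance the $\MR$ construction is meant to supply, and it lets me identify the standalone model $\FM(\Tr_{v_{i}},\eta)$ with the frogs of $\Tr$ confined to the subtree $\Tr_{v_{i}}$, up to finitely many exceptions. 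I would also use that once some infinite subtree carries a transient frog model the ambient simple random walk is transient, whence each finite child subtree is entered by, and returns to $o$ via, only finitely many frogs.

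For the implication ``all infinite subtrees transient $\Rightarrow$ $\Tr$ transient'' I would couple the frog model on $\Tr$ with independent frog models on the subtrees $\Tr_{v_{i}}$. Every visit to $o$ is preceded by a step from some child $v_{i}$, so $\nu$ is controlled by the number of frogs reaching the $v_{i}$; by transience of each subtree model together with the robustness above, only finitely many frogs ever return to each $v_{i}$, and the feedback created by frogs that leave $o$ and re-enter a subtree is absorbed into the same finite counts because the ambient walk is transient. Summing the finite contributions over the finitely many children gives $\nu<\infty$ almost surely. The converse is the easier direction: if some infinite subtree $\Tr_{v_{i}}$ were recurrent, then $v_{i}$ would be visited infinitely often in the dynamics on $\Tr$, and since $v_{i}$ is a neighbour of $o$, a Borel--Cantelli argument applied to the steps out of $v_{i}$ (each landing on $o$ with probability at least $1/\deg(v_{i})$) forces infinitely many visits to $o$.

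I expect the main obstacle to be the forward coupling in the previous paragraph. Decoupling the subtrees is delicate because frogs genuinely interact through the root, so the independent subtree models must be coupled to the true dynamics in a way that controls the feedback of frogs passing through $o$, and the degree change at each $v_{i}$ must be handled throughout via the root-moving invariance rather than dismissed as negligible. Once the recursion is in place, the generating-function computation and the passage to the conditional probability are routine.
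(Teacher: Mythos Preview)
Your approach via inherited properties is genuinely different from the paper's. The paper instead exploits the ergodicity of the augmented Galton--Watson measure $\AGW$: Lemma~\ref{lemma:invariant} shows that transience is invariant under re-rooting on a \emph{fixed} tree, so ergodicity gives $\AGW[\,\text{transient}\mid\text{infinite}\,]\in\{0,1\}$, and a separate coupling on trees of the form $\Tr_1\connect\Tr_2$ then transfers this dichotomy to $\GW$.

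Your argument, however, has a real gap at the inheritance step. Note first that you only need the direction $\Tr\in\A\Rightarrow\Tr_{v_i}\in\A$ (this already yields $\GW(\A)\le f(\GW(\A))$ and hence $\GW(\A)\in\{q,1\}$); in contrapositive this is your ``easier'' claim that recurrence of some $\FM(\Tr_{v_i},\eta)$ forces recurrence of $\FM(\Tr,\eta)$. But your justification conflates two different models: recurrence of $\FM(\Tr_{v_i},\eta)$ says that $v_i$ is visited infinitely often \emph{in the standalone model on $\Tr_{v_i}$}, not in the model on $\Tr$, and the two differ precisely at $v_i$ where the degrees disagree. The ``robustness'' you invoke---that deleting the edge $ov_i$ is a harmless finite modification---is false, since it disconnects the tree and discards the rest of $\Tr$ entirely; likewise the $\MR$ invariance concerns re-rooting on a fixed tree and does not bridge $\Tr_{v_i}$ to $\Tr$. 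In fact the inheritance fails for general trees: take $\Tr_{v_1}$ the half-line, $\Tr_{v_2}$ a large regular tree, and $\eta=\delta_0$; then $\FM(\Tr_{v_1},\delta_0)$ is recurrent while $\FM(\Tr,\delta_0)$ is transient. Any repair would therefore have to use the $\GW$ structure to establish inheritance $\GW$-almost surely, and your outline offers no non-circular way to do this. The paper's route through $\AGW$ sidesteps the problem because its comparison (the ``T-shirt'' coupling) is between $\Tr_1$ and $\Tr_1\connect\Tr_2$ with \emph{both} halves assumed recurrent, which is exactly the symmetry that makes that coupling go through.
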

Michelen and Rosenberg recently proved a stronger 0--1-law for recurrence and transience in \cite{RosenbergMichelen}. In fact, in our paper, recurrence is defined as not being transient, meaning that infinitely many particles return to the root with positive probability. The 0--1-law of  \cite{RosenbergMichelen} treats the stronger definition of recurrence; the process is recurrent if almost surely infinitely many particles return to the root.

We learned about their proof after writing our first version. While both proofs rely on the stationarity of the  augmented Galton-Watson measure, our proof differs in the connection between the ordinary Galton--Watson measure and the augmented Galton--Watson measure. 
In \cite{zerkosfrogs} Kosygina and Zerner proved a 0--1-law for transience and recurrence of the frog model on quasi-transitive graphs. 

The main result of the paper is the existence of a transient phase while allowing offspring 0 and 1:
\begin{theorem} \label{main}
Let  $\GW$  be a Galton--Watson measure defined by $(p_{i})_{i\geq 0}$. We assume that  $d_{max}=\max\{i : p_{i}>0\}<\infty$ and set $d_{min}:=\min\{i\geq 2: p_{i}>0\}$. Then, for any choice of $p_{0}$ and $p_{1}$ there exist some constants $c_{d}=c_{d}(p_{0},p_{1})$ and $c_{\eta}=(p_{0},p_{1}, d_{max})$ such that for $d_{min}\geq c_{d}$ the frog model $\FM(\Tr,\eta,\SRW)$ is transient $\GW$-almost surely (conditioned on $\Tr$ to be infinite) if $\bar{\eta} < c_\eta$.
\end{theorem}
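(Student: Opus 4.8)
The plan is to prove transience by controlling the total number of frogs that ever reach the root, and then to invoke the 0--1-law of Theorem~\ref{01law}. By that theorem the $\GW$-probability (conditioned on $\Tr$ being infinite) that $\FM(\Tr,\eta)$ is transient is either $0$ or $1$, so it suffices to exhibit an event of positive $\GW$-probability on which $\fmm[\nu<\infty]=1$. I would produce such an event by bounding a conditional expected number of returns to the root and applying Markov's inequality.

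First I would decompose the returns. Every visit to the root is made by some frog that was woken at a vertex $v$ and subsequently walked to $o$. Writing $A_v$ for the event that $v$ is ever visited (hence its frogs woken), $\eta(v)$ for the number of frogs initially at $v$, and letting $G(v,o)$ denote the expected number of visits to $o$ made by a single $\SRW$ started at $v$, the expected number of returns is dominated by
\[
  \sum_{v\in\Tr} \E[\eta(v)]\,\Prob[A_v]\,G(v,o)
  \;\le\; \bar\eta \sum_{v\in\Tr} G(v,o),
\]
since $\Prob[A_v]\le 1$ and frogs move as independent random walks once awake. Here the interaction can only help in the sense that it cannot create frogs; the genuine difficulty, addressed below, is that this first-moment bound implicitly treats awakened frogs as an independent family, whereas returning frogs can themselves trigger further awakenings.

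The heart of the argument is then to show that $\sum_{v}G(v,o)$ is finite (in expectation over a favourable environment) when $d_{min}$ is large. I would analyse $G(v,o)$ along the unique geodesic from $v$ to $o$ in $\Tr$, classifying its vertices into branching vertices (offspring $\ge d_{min}$), maximal degree-one chains arising from offspring-$1$ vertices, and leaves arising from offspring-$0$ vertices. Crossing a branching vertex towards the root costs a factor of order $1/d_{min}$ in $G$, yielding geometric decay in the number of branching vertices separating $v$ from $o$; a degree-one chain of length $L$ contributes at most a factor polynomial in $L$, but such a chain occurs with probability at most $p_1^{L}$, so its contribution is summable; leaves contribute only a bounded multiplicative factor. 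Summing over $v$, the number of vertices at a given branching-depth grows at a rate governed by $d_{max}$, while the per-vertex weight decays geometrically at rate $1/d_{min}$; choosing $d_{min}\ge c_d(p_0,p_1)$ large enough makes the decay win, so the series converges, and the smallness condition $\bar\eta<c_\eta(p_0,p_1,d_{max})$ keeps the prefactor controlled.

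The main obstacle is precisely the interaction masked by the inequality above: a frog returning to the root, or a wave of frogs woken along a long degree-one chain, can feed further awakenings and so sustain the return process. To make the first-moment bound rigorous I would dominate the population of frogs that reach the root by a multitype branching process whose types record the local environment (branching vertex, position in a chain, leaf) in the augmented Galton--Watson tree, exploiting the stationarity of $\AGW$ to obtain a self-similar recursion. The key quantitative step is to show that the mean matrix of this process has spectral radius strictly below $1$ exactly when $d_{min}\ge c_d$ and $\bar\eta<c_\eta$: the $1/d_{min}$ escape at branching vertices and the geometric tail $p_1^{L}$ of chain lengths must jointly dominate the frog-multiplication rate, which is where the bounded-offspring hypothesis $d_{max}<\infty$ and the explicit dependence of $c_\eta$ on $d_{max}$ enter. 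Subcriticality gives an almost surely finite number of returns on a positive-probability environment, and Theorem~\ref{01law} then promotes this to transience for $\GW$-almost every infinite tree.
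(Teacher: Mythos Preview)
Your first-moment bound fails before the interaction ever becomes an issue. The sum $\sum_{v\in\Tr} G(v,o)$ is \emph{always} infinite on an infinite graph of bounded degree: reversibility of the SRW gives $G(v,o)=\frac{\deg(o)}{\deg(v)}\,G(o,v)$, hence
\[
\sum_{v\in\Tr} G(v,o)\;\ge\;\frac{\deg(o)}{d_{max}+1}\sum_{v\in\Tr} G(o,v)\;=\;\infty,
\]
since $\sum_v G(o,v)$ is the expected total lifetime of a single walk. So the inequality $\bar\eta\sum_v G(v,o)$ yields nothing, however small $\bar\eta$ is; the factor $\Prob[A_v]$ you discarded was carrying all the decay. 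You also have the direction of the difficulty backwards: replacing $\Prob[A_v]$ by $1$ already gives the ``all frogs awake from time zero'' model, which \emph{dominates} the frog model, so there is no missing feedback to worry about---the bound is simply too crude.

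Your fallback to a dominating branching process is in the right spirit, but you miss the central obstruction that drives the paper's proof. The natural domination is by a branching Markov chain with mean offspring $\bar\mu=1+\bar\eta$, transient iff $\bar\mu\le 1/\rho(\Tr)$ (Theorem~\ref{transiencecriterium}). When $p_0+p_1>0$, however, the tree almost surely contains arbitrarily long stretches or bushes, forcing $\rho(\Tr)=1$ (Lemma~\ref{lemmaspectralradii}), so \emph{every} non-trivial BMC on $\Tr$ is recurrent; no choice of types for a multitype process on $\Tr$ circumvents this. The paper's remedy is to truncate each stretch to length at most $N$, obtaining a tree $\Tr_N$ with $\rho(\Tr_N)<1$, run a BMC there, and compare the frog model on $\Tr$ to this BMC \emph{pathwise in expectation over the stretch-length measure} $\ST$. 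Most of the work (Section~\ref{subsec:nobushesbutstretches}) goes into showing one can choose $N$ large enough that the $\ST$-averaged contribution of stretches longer than $N$ in the frog model is absorbed by the BMC on $\Tr_N$, yet small enough that the BMC remains transient; the resulting tension is precisely Inequality~(\ref{eq:final_stretches}), and it is what forces $d_{min}\ge c_d$. Bushes are handled by first collapsing each finite bush onto its root, which inflates $\bar\eta$ by a bounded factor depending on $d_{max}$, and then reducing to the stretch case.
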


We recall that $\bar{\eta}$ is the expected value of the number of sleeping frogs at each vertex. The assumption of finite maximum offspring is needed to control the possible number of attached bushes in a Galton--Watson tree allowing offspring $0$. We want to note that the proof in the case with stretches and no bushes does not need this assumption.
The proof of Theorem \ref{main} gives bounds on the constants. These bounds can certainly be improved in refining the involved estimates, see Figure \ref{fig:dMsearch} for some explicit values.
\begin{figure}[th]
 \centering
  \resizebox{0.65\textwidth}{!}{
\begin{tikzpicture}
\begin{axis}[xlabel=$p_1$, ylabel=$c_{d}$]
\addplot[only marks ] table [col sep=comma] {searchdn_newdmin.csv};
\node[] at (-30,2) {\tiny{$2$}};
\node[] at (1,16) {\tiny{$9$}};
\node[] at (390,24) {\tiny{$16$}};
\node[] at (720,34) {\tiny{$23$}};
\end{axis}
\end{tikzpicture}
    }
\caption{The minimal $d_{\min}=c_{d}$ for each $p_1$, such that there exists a transient phase of the frog model; with $p_{0}=0$. The mesh size for $p_{1}$ is $0.01$.}
 \label{fig:dMsearch}
\end{figure}
We believe that a different approach or a new perspective is needed to  prove the following conjecture. 

\begin{conjecture}
Let  $\GW$  be a Galton--Watson measure defined by $(p_{i})_{i\geq 0}$ with mean offspring $\sum_{i} i {p_{i}}>1$. Then, there exists some constant $c=c(\GW)>0$ such that the frog model $\FM(\Tr,\eta,\SRW)$ is transient for $\GW$-almost all infinite realization $\Tr$ if $\bar{\eta} < c$.
\end{conjecture}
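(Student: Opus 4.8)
Since the statement is conjectural — the authors themselves flag that a new perspective seems necessary — what follows is the strategy I would attempt rather than a complete argument. The plan is to reduce, via the $0$--$1$-law of Theorem \ref{01law}, to proving that transience holds with positive $\GW$-probability, and to establish this by a first-moment estimate. Write $N$ for the number of \emph{distinct} frogs that ever reach the root. On a $\GW$-a.s.\ transient tree each activated frog performs a transient $\SRW$ and hence visits $o$ only finitely often, so $\nu<\infty$ is equivalent to $N<\infty$, and $\E_{\fmm}[N]<\infty$ already forces transience. Conditioning on $\Tr$, let $F(x,o)$ denote the $\SRW$-probability of ever hitting $o$ from $x$ (on a tree a product of per-edge return factors along the geodesic) and $p_{\mathrm{act}}(x):=\fmm[x\text{ is ever visited}]$. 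Since the event that $x$ is activated depends only on the walks and configurations away from $x$, it is independent of $\eta(x)$ and of the fresh walks of the frogs sitting at $x$; each of those reaches $o$ with probability $F(x,o)$ regardless of its activation time. This yields the clean identity
\[
\E_{\fmm}[N] \;=\; 1 \;+\; \bar\eta \sum_{x\neq o} F(x,o)\, p_{\mathrm{act}}(x).
\]

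First I would record why the naive bound $p_{\mathrm{act}}(x)\le 1$ is hopeless: already on a regular tree $\sum_{x}F(x,o)$ diverges, the level-$n$ contribution being of order one, so the decay of the activation probability is essential. The heart of the matter is therefore to control $p_{\mathrm{act}}(x)$. I would dominate the activation process by a branching random walk in which every frog, upon arriving at a site, spawns $\eta(x)$ independent $\SRW$ walkers instead of waking a single sleeping frog; this over-counts visits, so $p_{\mathrm{act}}(x)$ is bounded by the expected number $g(x)$ of branching-walk particles ever at $x$, which obeys a linear renewal equation driven by the Green's function. For small $\bar\eta$ this branching walk is ``subcritical in the outward direction,'' and the goal is to show that $\bar\eta\,F(x,o)\,g(x)$ is summable, the summability threshold defining $c=c(\GW)$. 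The natural level parameter is not graph distance but the number of branch points crossed (the reduced tree): across each branch point $F$ loses a geometric factor coming from the outward drift there, whereas along a stretch of degree-$2$ vertices the walk is unbiased and $F$ barely decays. On such stretches I would replace the pointwise estimate by a renewal/regeneration computation, integrating the contribution of an entire stretch at once and using the stationarity of the environment seen from the walk under the augmented measure $\AGW$ to handle the randomness of the stretch lengths.

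The hard part, and the reason the statement remains only conjectured, is twofold. First, the feedback between activation and return is genuinely self-reinforcing — woken frogs activate further frogs, which can themselves return — so the branching-walk domination is lossy exactly where one needs it to be sharp. Second, and more seriously, as the mean offspring $\sum_i i p_i$ approaches $1$ the tree becomes dominated by long stretches of geometrically distributed length, the drift at the sparse branch points becomes arbitrarily weak, and the transience of the single walk grows fragile; the dominating branching walk then degenerates from subcritical to critical no matter how small $\bar\eta$ is taken, and a pure first-moment sum threatens to diverge rather than merely forcing $c(\GW)\to0$. Overcoming this would require either a second-moment or martingale refinement capturing cancellation among the many simultaneously activated frogs, or an inductive subtree decomposition producing a contraction estimate for the return count. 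The decisive difficulty I expect is \textbf{uniformity}: the contraction (or summability) constant must be controlled uniformly over the random environment so that, after passing through the $0$--$1$-law, the conclusion holds for $\GW$-almost every infinite realization — and it is precisely this uniformity over near-critical environments that the current regeneration-and-first-moment toolkit does not seem able to deliver.
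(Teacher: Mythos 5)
You should first be clear about the status of this statement: it is the paper's Conjecture~1.3, which the paper itself does \emph{not} prove --- Theorem~\ref{main} only covers offspring laws with $d_{max}<\infty$ and $d_{min}$ above a threshold $c_d(p_0,p_1)$, strictly less than what is conjectured --- so there is no proof of record to compare against, and your text, candidly presented as a plan, is not a proof either. The parts you do carry out are sound: the reduction via the 0--1-law (Theorem~\ref{01law}) to showing transience with positive $\GW$-probability, the equivalence of $\nu<\infty$ and $N<\infty$ on trees where $\SRW$ is a.s.\ transient, and the first-moment identity $\E_{\fmm}[N]=1+\bar\eta\sum_{x\neq o}F(x,o)\,p_{\mathrm{act}}(x)$ (the independence argument for the first visit to $x$ is correct). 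These steps are also consistent with the methodology behind the paper's partial result.

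The genuine gap is the step you lean on to control $p_{\mathrm{act}}(x)$: domination by a branching random walk in which every visit spawns fresh walkers. This fails not merely ``as the mean offspring approaches $1$,'' as you suggest, but for \emph{every} offspring law with $p_0+p_1>0$, i.e.\ on the entire range that the conjecture adds beyond Lemma~\ref{couplingstandard}. If $p_1>0$, the tree $\GW$-a.s.\ contains stretches of arbitrary length (Borel--Cantelli), hence $\rho(\Tr)=1$ by Lemmas~\ref{lem:unbranched} and~\ref{lemmaspectralradii} (and bushes force the same conclusion when $p_0>0$). By Theorem~\ref{transiencecriterium}, any $\BMC$ with mean offspring $\bar\mu=1+\bar\eta>1=1/\rho(\Tr)$ is then recurrent, and its expected number of returns to the root,
\begin{equation*}
\sum_{n\geq 0}(1+\bar\eta)^n p^{(n)}(o,o),
\end{equation*}
diverges because the Green function has radius of convergence $1/\rho(\Tr)=1$. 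Consequently the dominating sum $\bar\eta\sum_x F(x,o)\,g(x)$ is infinite for every $\bar\eta>0$: the first-moment-plus-domination scheme does not define a degenerating threshold $c(\GW)\to 0$, it collapses identically, even for laws far from criticality (say $m=10$ with $p_1=0.01$). This is precisely why the paper's Proposition~\ref{propositionstretches} abandons domination on $\Tr$ itself and instead compares expected visit counts path-by-path with a $\BMC$ on a truncated tree $\Tr_N$ (stretches cut at length $N$), and even that comparison only closes when $d_{min}$ satisfies (\ref{eq:final_stretches}); note also that the conjecture drops the assumption $d_{max}<\infty$, which the paper needs to control bushes. Your closing remarks about regeneration along stretches and $\AGW$-stationarity point toward the right difficulty, but they are not carried out, and as written the proposal contains no mechanism that could produce the conjectured constant $c(\GW)>0$.
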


For proving Thorem \ref{main} we compare the frog model with a branching Markov chain (BMC). In contrast to the frog model, the particles in the BMC branch at every vertex, regardless if they visited the vertex already or not. Therefore,  there are more particles in the BMC than in the frog model and we can couple the two models. In this way, transience of the BMC implies transience of the frog model. The same kind of approach was already used for example in the proofs of transience 
in \cite{poissonfrogsone} and \cite{poissonfrogstwo}. 

While on homogeneous trees the existence of a transient branching Markov chain is guaranteed, this is no longer true in general for Galton--Watson trees. Namely, allowing  the particle to have  $0$ and $1$ offspring creates stretches and finite bushes in the family tree. Such trees have a spectral radius equal to $1$ and therefore the branching Markov chain is always recurrent on such trees, see \cite{mullerganterttransience}. To tackle this problem, we first modify the  Galton--Watson trees and then adapt the branching Markov chain to get a dominating process. Firstly, we start with dealing with arbitrary long stretches. This turns out to be more difficult than expected, since a direct coupling of the frog model and the branching Markov chain is not possible. For this reason we compare the expected number of returns to the root of the frog model with the expected number of returns of annother, appropriate branching Markov chain. Next, we treat the case of appearing bushes and possible stretches. This part is essentially a rather straightforward generalization of the first part. The main idea is to control the bushes and the ``backbone'' (the tree without bushes) separately. The backbone is essentially a Galton--Watson tree with stretches and the bushes just increase the number of frogs per vertex. 

The paper is structured in the following way. In Section \ref{sectionGW} we give an introduction to Galton--Watson trees  and state some useful structural results. Then, we recall the definition of a branching Markov chain together with the above stated transience criterion in Section \ref{sectionBMC}. The 0--1-law is proved in Section \ref{sec:0-1}. The proof of Theorem \ref{main} will be split in three parts. In Subsection \ref{subsec:nobuschnostretch} we treat the case of no bushes and no stretches ($p_1=0,p_0=0$), in Subsection \ref{subsec:nobushesbutstretches}  the case when there are no bushes, but stretches ($p_1>0,p_0=0$), and in Subsection \ref{subsec:bushesandstretches} the case when we have bushes and possibly also stretches ($p_0>0$).

\section{Galton--Watson trees}\label{sectionGW}
The {Galton--Watson tree} (GW-tree) is the family tree of a {Galton--Watson process}. This latter process starts with one particle at time $0$ and at each discrete time step every particle  generates new particles independently of the previous history and the other particles of the same generation. More formally, let $Y$ be a non-negative integer valued random variable with $p_{k}:=\Prob[Y=k]$ for each $k\in \N$ and let $m:=\sum_{k\geq 0} k\, p_k$ be the mean of $Y$. Moreover, let $Y_i^{(n)}$, $i,n\in \N$, be independent and identically distributed random variables with the same distribution as $Y$. Then, the Galton--Watson process is defined by $Z_0 :=1$ and 
\[Z_n:=\sum_{i=1}^{Z_{n-1}} Y_{i}^{(n)}\, \] 
for $n\geq 1$.  The random variable $Z_n$ represents the number of particles in the  $n$-th generation. 
A GW-process with $p_0>0$ will survive with positive probability, that is $\Prob[Z_n >0 \,\ \mbox{for all} \,\ n>0]>0$, if and only if $m>1$.  We introduce $\T$ as the random variable for the family tree of the GW-process and its corresponding measure by $\GW$. Moreover, we denote by $\Tr:=\T(\omega)$ a fixed realization of $\T$. 
In the remaining paper we only consider GW-trees with bounded number of  offspring: There exists a $d_{max} \in \N$, such that $\sum_{k=0}^{d_{max}} p_k=1$.  
For a more detailed introduction to GW-processes and trees we refer to Chapter 5 in \cite{baumbibel}. 

In the case where $p_0>0$  the  GW-tree  contains a.s.~finite bushes. We will distinguish between two types of vertices.
\begin{definition}
We call a vertex $v \in \Tr$ of {type $\tg$} if it lies on an infinite geodesic starting from the root. Otherwise we call vertex $v$ of {type $\tb$}.
\end{definition}
If a vertex of type $\tb$ is the child of a type $\tg$ vertex we call it of {type $\tbr$} and speak of it as the root of the finite bush that consists of its descendants.

We set
\[f(r):= \E\left[r^Z\right]=\sum_{k\geq0} r^k p_k\]
 as the generating function of the GW-process and $q$ the smallest solution of $f(r)=r$.
 
 Let us consider the case  where $p_0>0$ and describe the distribution of the  tree $\T$ conditioned to be infinite.  We start with a tree $\T^*$ generated according to the generating function 
\[f^*(s):= \frac{f(q+(1-q)s)-q}{1-q}\, .\]
This tree will serve as the backbone of $\bar{\T}$ and looks like a  supercritical GW-tree without leaves. All vertices in this tree are of type $\tg$. To each of the vertices of $\T^*$ we attach a random number of independent copies of a sub-critical GW-tree generated according to
\[\tilde{f}(s):=\frac{f(qs)}{q}\, .\]
These are finite bushes consisting of vertices of type $\tb$.
The resulting tree $\bar{\T}$ has the same law as $\T$, conditioned on nonextiction and is a multitype GW-tree with vertices of type $\tb$ and $\tg$. We denote the measure generating $\bar{\T}$ by $\GWm$, e.g.~see Proposition 5.28 in \citep{baumbibel}.

Let $(Z^{sub}_n)_{n\geq 0}$ denote the subcritical Galton--Watson process with probability generating function $\tilde{f}$ and $\T^{sub}$ its family tree. We know that $\E[Z^{sub}_1]<1$ and moreover it holds, e.g.,~Theorem 2.6.1 in \cite{jagers} that 
\begin{align}
\lim_{n\rightarrow \infty} \frac{\Prob[Z^{sub}_n>0]}{\E[Z^{sub}_1]^n}=c \quad \text{and} \quad \E\left[\lvert\T^{sub}\lvert\right]<\infty\, . \label{populationsubcritical}
\end{align}

Now, if we assume that $p_1>0$ the resulting GW-tree may contain  arbitrary long stretches. We want to show that this tree generated by $\GW$ is equivalent to a tree generated in  three steps where firstly the tree without stretches is generated, secondly the location of the stretch is determined and thirdly the stretches are inserted. Therefore we define a new GW-measure using the modified offspring distribution
\[\hat{p}_k:=\frac{p_k}{1-p_1}\]
for $k=0,2,\ldots,N$ and let ${\GW}_{\mathsf{bg}}$ be the measure generating a tree with this distribution. Let us denote a tree generated by $\GWbb$ with $\T_{\tb\tg}$. In the next step every vertex will be independently labeled with $\lbs$ with probability $p_1$, which denotes the starting point of a stretch. If such a vertex has no offspring we attach one vertex, otherwise insert a vertex with offspring one in between the vertex and its descendants, see Figure \ref{figure:Baumstretch}. We write for such a tree $\T_{\pa\times\tb\tg}$. In the next step, the length of the stretch attached to a vertex with label $\lbs$ will be distributed according to $L$ were $L$ is geometrically distributed $geo(p_1)$ and we obtain a tree $\T_{\ls\times\pa\times\tb\tg}$.  This yields  $1+geo(p_1)$ distributed vertices with offspring one in a row. 
\begin{figure}[h]
 \centering
 \begin{minipage}[t]{0.24\textwidth}
 \vspace*{0pt}
\includegraphics[width=1\textwidth]{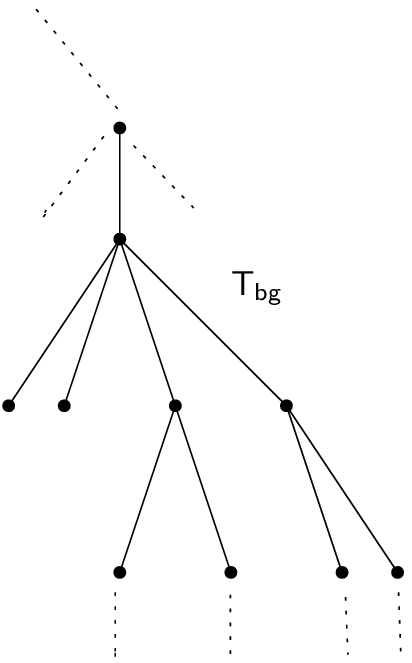}
\end{minipage}
\hspace{6mm}
\begin{minipage}[t]{0.24\textwidth}
\vspace*{0pt}
\includegraphics[width=1\textwidth]{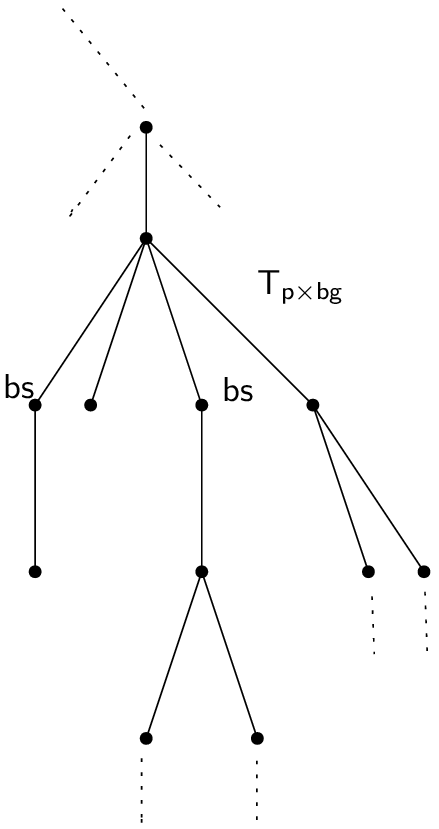}
\end{minipage}
\hspace{6mm}
\begin{minipage}[t]{0.24\textwidth}
\vspace*{0pt}
\includegraphics[width=1\textwidth]{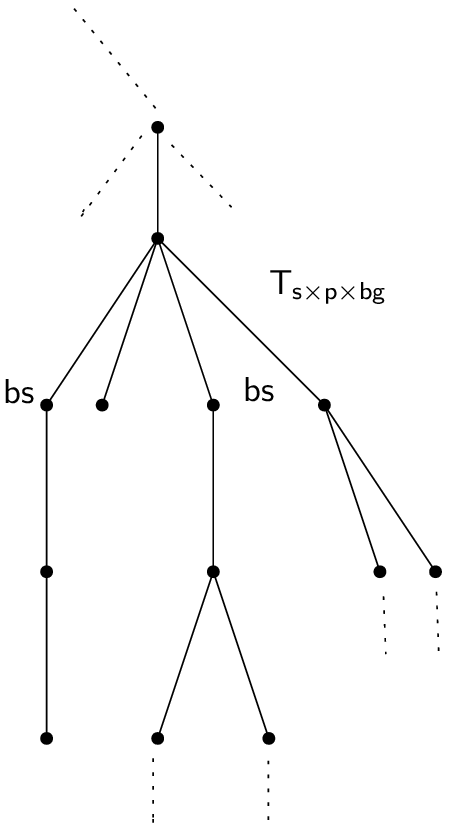}
\end{minipage}
\caption{Realizations of ${\T}_{\ls\times \pa\times \tb\tg}$ by $\ST\times \PER\times \GWbb$ step by step.}
 \label{figure:Baumstretch}
\end{figure}
The length of the stretches will be determined for each stretch starting point independently and identically distributed. We will call this measure of selecting a stretch point $\PER$ and the one of choosing the length of the stretch by $\ST$. We denote by  $\T_{\ls\times\pa\times\tb\tg}$ the tree constructed in the three steps according to $\ST\times \PER\times \GWbb$. 
The resulting tree has the same distribution as the tree constructed as follows: We start with a root and proceed inductively. Every new vertex
\begin{itemize}
\item has $0$ descendants (new vertices)  with probability  $\hat{p}_0(1-p_1)=p_0$,
\item has $k\geq2$ descendants with probability $\hat{p}_k (1-p_1)=p_k$,
\item is the starting point of a stretch with length $\ell+1$ and the end of the stretch has  $k =0,2,\ldots,N$ descendants  with probability $ p_1 p_1^{\ell} (1-p_1) \hat{p}_k=p_1 p_1^{l} {p_k}$.
\end{itemize}
Moreover,  it holds for any finite tree $\Tr$, that
$ \GW(\Tr')=\ST\times \PER\times \GWbb (\Tr'')\, $ where $\Tr'$ and $\Tr''$ are GW-trees starting with $\Tr$.
Therefore the two measures $ \GW$ and $\ST\times \PER\times \GWbb$ are equivalent on the space of all rooted locally finite trees.

\section{Branching Markov chain} \label{sectionBMC}
One method for proving transience of the frog model relies on the comparison of the frog model to a {branching Markov chain (BMC)}. 

A BMC is a cloud of particles that move on an underlying graph $G$ in discrete time. The process starts with one particle in the root $o$ of the graph. Each particle splits into offspring particles at each time step, which then move one step according to a Markov chain on $G$. Particles branch and move independently of the other particles and the history of the process. 
We denote $\mu(v)=(\mu_{k}(v))_{k\in \N}$ for the offspring distribution in a vertex $v$; $\mu_{k}(v)$ is the probability that a particle in $v$ splits into $k$ particles. 

A BMC  can also be seen as a labeled Galton--Watson process or tree-indexed Markov chain, \cite{BePe:94a}, where the labels correspond to the particles' position. 
In our setting the particles will move on a tree $T$ according to   the transition operator $P=(p(v,w))_{v,w\in T}$ of a simple random walk (SRW). We note $p^{(n)}(v,w)$ for the $n$-step probabilities.
If $T$ is connected, the SRW is irreducible and the {spectral radius}
\[ \rho(T):= \rho(P):=\limsup_{n\rightarrow \infty}\left( p^{(n)}(v,w)\right)^{\frac{1}{n}}, v,w\in T\]
is well-defined and takes values in $(0,1]$. 

We add the branching mechanism that in every vertex $v\in T$ a particle arriving at $v$ branches according to a branching distribution $\mu(v)$; i.e.~each $\mu(v)$ is a measure on $\N$. We denote by $\mu$ the whole sequence $(\mu(v))_{v\in T}$. The expected value of each branching distribution is
\[\bar{\mu}(v)=\sum_{k\in \N} k \mu_k(v)\]
for all $v \in T$ where $\mu_k(v)$ is the probability that a particle jumping to $v$ branches into $k \in \N$ particles. We set  $\BMC(T,P,\mu)$ for this branching Markov chain.

Similarly to the frog model, the $\BMC$ is called {transient} if the root will be visited almost surely only by finitely many particles. Otherwise it is called {recurrent}. A particular case of the transience criterion for BMC given in \citep{mullerganterttransience} is the following.
\begin{theorem} Let $T$ be a locally finite tree and $P$ the transition of the SRW on $T$. We assume that all branching distributions $\mu(v)$, $v \in T$, have the same mean   $\bar{\mu}>1$. Then the $\BMC(T,P,\mu)$ is transient if and only if 
\[\bar{\mu}\leq \frac{1}{\rho(T)}\, .\] \label{transiencecriterium}
\end{theorem}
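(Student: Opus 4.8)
The plan is to reduce the transience/recurrence dichotomy of $\BMC(T,P,\mu)$ to the extinction/survival of an auxiliary Galton--Watson process that counts returns to the root, and to control the mean of that process through the Green function of the SRW. The starting point is a first-moment (many-to-one) computation. Since by hypothesis every branching distribution has the same mean $\bar\mu$, the expected total number of particles at time $n$ is $\bar\mu^n$, and because branching and motion are independent, the expected number of particles sitting at $w$ at time $n$ started from one particle at $v$ equals $\bar\mu^n p^{(n)}(v,w)$: each lineage of the branching tree carries an independent SRW trajectory. Refining this to first returns, let $u^{(n)}(o,o)$ be the probability that a SRW started at $o$ returns to $o$ for the first time at time $n$, and put $U(z):=\sum_{n\ge1}u^{(n)}(o,o)\,z^n$ and $G(z):=\sum_{n\ge0}p^{(n)}(o,o)\,z^n$. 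Then the expected number of particles whose lineage returns to $o$ for the first time equals $m:=U(\bar\mu)$.

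Next I would exhibit the Galton--Watson structure of the returns. By the branching Markov property each first-return particle at $o$ launches, independently of the others and of the past, a fresh copy of the $\BMC$ from $o$. Hence the successive returns to $o$ form a Galton--Watson process whose offspring mean is exactly $m=U(\bar\mu)$, and the total number of visits to $o$ is finite if and only if this process goes extinct. By standard Galton--Watson theory the process dies out almost surely precisely when $m\le1$ (the critical case $m=1$ still dies out, the number of first returns being genuinely random, hence non-degenerate) and survives with positive probability when $m>1$. Thus $\BMC(T,P,\mu)$ is transient if and only if $m\le1$.

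It then remains to translate $m\le1$ into $\bar\mu\le 1/\rho(T)$, using the renewal identity $G(z)=1/(1-U(z))$ together with the fact that $1/\rho(T)$ is the common radius of convergence of $G$ and $U$. For $z<1/\rho(T)$ the Green function converges and $G(z)\ge p^{(0)}(o,o)=1$, so $U(z)=1-1/G(z)\in[0,1)$; letting $z\uparrow 1/\rho(T)$ and invoking Abel's theorem for power series with nonnegative coefficients gives $U(1/\rho(T))\le1$. Hence $\bar\mu\le 1/\rho(T)$ forces $m\le1$ and transience. Conversely, for $\bar\mu>1/\rho(T)$ one evaluates $U$ beyond its radius of convergence, so $m=U(\bar\mu)=+\infty>1$ and the process is recurrent. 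Here one must first check that the radius of convergence of $U$ really is $1/\rho(T)$: by Pringsheim's theorem $z=1/\rho(T)$ is a singular point of $G$, and were $U$ analytic there with $U(1/\rho(T))<1$, the relation $G=1/(1-U)$ would make $G$ analytic at $1/\rho(T)$ as well, a contradiction; combined with $u^{(n)}\le p^{(n)}$ this pins the radius at $1/\rho(T)$.

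The main obstacle is the recurrent direction: turning the divergence $U(\bar\mu)=\infty$ into genuine survival of the return process, i.e.\ arguing that an offspring law of infinite mean is still supercritical (extinction probability strictly below $1$) and dealing with the case where the number of first returns is itself infinite with positive probability. The delicate boundary is the critical value $\bar\mu=1/\rho(T)$, which is exactly where the estimate $U(1/\rho(T))\le1$ obtained via Abel's theorem is required; it is this estimate that secures transience at criticality and separates the two regimes.
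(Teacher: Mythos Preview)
The paper does not prove this theorem at all: it is quoted as ``a particular case of the transience criterion for BMC given in \cite{mullerganterttransience}'' and used as a black box. So there is no proof in the paper to compare against; your proposal is an attempt to supply what the paper merely cites.

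As a proof, your outline is essentially the standard one (embedded Galton--Watson process of first returns, controlled via $U(z)$ and the renewal identity $G=1/(1-U)$), and it is basically correct. Two small points are worth tightening. First, your argument that the radius of convergence of $U$ equals $1/\rho(T)$ only rules out the case ``$U$ analytic at $1/\rho(T)$ with $U(1/\rho(T))<1$''; it does not exclude $U(1/\rho(T))=1$ with $U$ analytic there (the $\rho$-recurrent situation). This gap is harmless for your conclusion: in that scenario $U$ is strictly increasing on its interval of convergence with $U(1/\rho(T))=1$, so $U(\bar\mu)>1$ for every $\bar\mu>1/\rho(T)$ whether or not the series diverges, and the return process is supercritical. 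Second, at the critical value $\bar\mu=1/\rho(T)$ you need the offspring law of the return process to be non-degenerate; this follows because $\bar\mu>1$ forces $\rho(T)<1$, hence the SRW is transient and the number of first-return particles is $0$ with positive probability. With these two clarifications your argument goes through.
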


\begin{remark}
A  BMC with $\bar \mu=1+\bar \eta$ is a natural candidate to bound the frog model with sleeping frogs distributed according to $\nu$. Let us consider the open problem of transience of the ``one frog per vertex frog model''  on the $4$-ary tree $\Tr_{5}$,  see \cite{onefrogpersite}. The spectral radius of the SRW on $\Tr_{5}$ is $\rho(\Tr_{5})=4/5.$ In the case of one frog per vertex we have that $\bar \mu=2$ and the criterion can not be applied to obtain transience. However, if the probability of having one sleeping frog is less than $1/4$ and zero frogs otherwise,  then $\bar \mu \leq 5/4$ and the BMC is transient.
\end{remark}

\section{0--1-law for transience}\label{sec:0-1}
Before proving the existence of a transient phase for the frog model we want to show that the existence of a transient phase does not depend on the specific realization of the GW-tree. In other words, we show that the frog model is either transient for $\GW$-almost all infinite trees or recurrent for $\GW$-almost all infinite trees. 

The proof of  this 0--1-law,  Theorem \ref{01law}, relies on the concept of the environment viewed by the particle. We prove that the events of transience and recurrence are invariant under re-rooting and hence the 0--1-law follows from the ergodicity of the augmented GW-measure.

The {augmented Galton--Watson measure}, denoted by  \textbf{$\AGW$}, is a stationary version of the usual Galton--Watson measure. This measure is defined just like $\GW$ except that the number of children of the root  has the law of $Y + 1$; i.e.~the root has $k + 1$ children with probability $p_{k}$.   The measure $\AGW$ can also be constructed as follows: choose two independent copies $\Tr_{1}$ and $\Tr_{2}$ with roots $o_{1}$ and $o_{2}$ according to $\GW$ and connect the two roots by one edge to obtain the tree $\Tr$ with the root $o_{1}$. We write $\Tr= \Tr_{1} \connect \Tr_{2}$.

We consider the Markov chain on the state space of rooted trees. 
If we change the root of a tree $T$ to a vertex $v\in T$, we denote the new rooted tree by $\MR(T,v)$. We define a Markov chain on the space of rooted trees as:
\begin{align*}
p_{\SRW}((T,v),(T',w))=
\begin{cases} \frac{1}{deg(v)}, \,\ &\text{if} \,\  v\sim w\,\  \text{and}\,\ (T',w)=\MR(T,w), \\
0, \,\ &\text{otherwise}.
\end{cases}
\end{align*}
By Theorem 3.1 and Theorem 8.1 in \cite{peres:ergodic} it holds that this Markov chain with transition probabilities $p_{\SRW}$ and the initial distribution $\AGW$ is {stationary} and {ergodic} conditioned on non-extinction of the Galton--Watson tree. 

\begin{lemma} \label{lemma:invariant}
The events of transience and recurrence of the frog model are invariant under changing the root of the underlying rooted tree $T=(T,o)$, i.e.~$\FM(T, \eta)$ is transient if and only if  $\FM(\MR(T,v), \eta)$ is transient for some (all) $v\in T$.
\end{lemma}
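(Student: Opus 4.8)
The plan is to reduce to the case where $v$ is a neighbour of $o$ and then to combine a monotonicity (coupling) principle with a Borel--Cantelli argument relating the numbers of visits to adjacent vertices. Since any vertex $v\in T$ is joined to $o$ by a finite path $o=x_0\sim x_1\sim\cdots\sim x_k=v$, once the statement is known for neighbours it follows for arbitrary $v$ by moving the root one edge at a time. Throughout I will use the monotonicity principle for the frog model: if one initial frog configuration dominates another (more initially awake frogs and/or more sleeping frogs at every vertex), then the two models can be coupled so that every visit to every vertex in the smaller model is also realised in the larger one; in particular recurrence is preserved under adding frogs. I write $\nu_o,\nu_v$ for the aggregated numbers of visits to $o$ and $v$.

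The first and cleanest step is the following claim, valid for an arbitrary fixed tree and an arbitrary frog model on it: for neighbours $o\sim v$ one has $\{\nu_o=\infty\}=\{\nu_v=\infty\}$ up to a null set. To see this I reveal the trajectories step by step. Each time some frog sits at $o$, its next increment is uniform over the $\deg(o)$ neighbours and independent of the past, hence equals the step $o\to v$ with probability $1/\deg(o)>0$. On the event $\{\nu_o=\infty\}$ there are infinitely many such independent trials, so by the second Borel--Cantelli lemma infinitely many of them step to $v$; each such step is a visit to $v$, whence $\nu_v=\infty$ almost surely on $\{\nu_o=\infty\}$. By symmetry the reverse inclusion holds. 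Consequently, \emph{within a single model} recurrence does not depend on which vertex we track: $o$ is visited infinitely often with positive probability if and only if $v$ is, and by iterating along a path this holds for every pair of vertices.

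It remains to compare the two \emph{different} models $\FM((T,o),\eta)$ and $\FM((T,v),\eta)$ for neighbours $o\sim v$. These differ only in the initial data at the two boundary vertices: in the $o$-rooted model $o$ carries the awake seed and no sleeping frogs while $v$ carries its $\eta(v)$ sleeping frogs, and in the $v$-rooted model the roles are exchanged; every other vertex carries the identical sleeping configuration. I would couple the two models on a common supply of frogs and condition on the initial awake frog making its first step across the edge $\{o,v\}$, an event of probability $1/\deg(o)>0$. After this step the $o$-rooted model has an awake frog at $v$ together with the awakened home frogs at $v$, all other sleeping frogs being untouched; its continuation therefore dominates the $v$-rooted model with the $\eta(o)$ sleeping frogs at $o$ deleted. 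Together with the claim of the previous paragraph and monotonicity, this transfers recurrence from this reduced $v$-rooted model to the $o$-rooted one, and the symmetric conditioning transfers it the other way, so that the equivalence holds \emph{up to} the finitely many frogs sitting at $o$ and $v$.

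The main obstacle is precisely this last point: showing that the finitely many frogs by which the two configurations differ cannot change the transience/recurrence dichotomy. I plan to isolate it as a finite-modification lemma --- adding finitely many frogs at a fixed vertex to a transient model leaves it transient --- and to attack it as follows. In the coupling, the extra activity generated by the added frogs is dominated by the progeny of a frog model launched at that vertex, which lives on the same tree; since the simple random walk on $T$ is transient (equivalently $\rho(T)<1$), each individual particle of this progeny crosses the separating edge $\{o,v\}$ only finitely often almost surely, and a Borel--Cantelli count of edge crossings should bound the number of particles that ever reach $o$. The delicate part, which I expect to be the crux of the whole proof, is to rule out an infinite cascade of newly awakened frogs conspiring to return to the root infinitely often while the original model stays transient; this is where the transience of the single walk must be used essentially, and it is the step that needs the most care to avoid circularity with the re-rooting invariance one is trying to establish.
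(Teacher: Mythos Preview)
Your overall architecture---reduce to neighbours, use Borel--Cantelli to identify $\{\nu_o=\infty\}=\{\nu_v=\infty\}$ within one fixed model, then compare the two rooted models via monotonicity---is exactly the paper's, and your first two steps are correct and match it.

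The genuine gap is your ``finite-modification lemma'', and your proposed attack on it does not work. First, the parenthetical ``equivalently $\rho(T)<1$'' is simply false: on the very Galton--Watson trees treated in this paper with $p_0+p_1>0$ one has $\rho(\Tr)=1$ almost surely (Lemma~\ref{lemmaspectralradii}) while the SRW is still transient, so any argument that feeds $\rho(T)<1$ into a generating-function or spectral bound is unavailable here. Second, and more seriously, even granting transience of the single walk, your Borel--Cantelli edge-crossing count controls each particle individually but says nothing about the cascade: the extra frogs at $o$ may awaken new frogs, which awaken further frogs, and what you need is a bound on the \emph{total} number of particles ever reaching $o$. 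This is precisely the circularity you flag in your last sentence, and you have not resolved it.

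The paper sidesteps the entire issue by a conditioning trick that you are missing. Rather than trying to absorb the discrepancy of $\eta(o)$ frogs, \emph{match} it across the two models. If the $v$-rooted model is recurrent, pick some $k$ with $\Prob[\eta=k]>0$ such that, conditioned on $\eta(o)=k$, infinitely many frogs still visit $v$ with positive probability. Now work in the $o$-rooted model conditioned on $\eta(v)=k$. With positive probability the seed frog steps $o\to v$; then the $k$ freshly awakened frogs at $v$ all step back $v\to o$ while the seed waits one step at $v$ (a harmless local modification). The resulting configuration has one awake frog at $v$, $k$ awake frogs at $o$, and the identical sleeping configuration elsewhere; hence it \emph{dominates} the $v$-rooted model conditioned on $\eta(o)=k$. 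Recurrence transfers, and your Borel--Cantelli step then carries infinitely many visits at $v$ over to $o$. No finite-modification lemma is needed at all.
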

\begin{proof}
As the case of finite trees is trivial we consider an infinite rooted tree $(T,o)$ and let $v\sim o$.  We proof that transience of $(T,o)$ implies transience of $(T,v)$ by assuming the opposite. If $\FM(\MR((T,v), \eta)$ is recurrent, then there exists some $k\in \N$ such that with positive probability infinitely many frogs visit $v$ conditioned on $\eta(o)=k$. In the frog model $\FM((T,o),\eta)$ conditioned on $\eta(v)=k$, the starting frog in $o$ jumps to $v$ with positive probability. Again with positive probability at the second step  all frogs awaken in $v$ jump back to  $o$ while the frog that came from $o$ is assumed to stay in $v$ for one time step. Note that this has no influence on transience or recurrence of the process.
 This recreates the same  initial configuration of $\FM((T,v), \eta)$ conditioned on $\eta(o)=k$  with the difference that more frogs are already woken up. By assumption in this process infinitely many particles visit $v$ with positive probability, and hence, by the Borel--Cantelli Lemma,  also $o$ is visited infinitely many times with positive probability. A contradiction. The claim for arbitrary $v$ now follows by induction and connectedness of the tree.
\end{proof}

\begin{proof}[Theorem \ref{01law}]
By the ergodicity of the Markov chain with  transition probabilities $p_{\SRW}$ and Lemma \ref{lemma:invariant}, it holds that 
\begin{align*}
&\AGW [\FM(\Tr, \eta)\, \text{ transient} \, \lvert \, \Tr \, \text{infinite} ]\in \{0,1\}\, . 
\end{align*}
We prove first that \[\GW[ \FM(\Tr, \eta)   \, \text{is transient}]>0\] implies  \[\AGW[\FM(\Tr, \eta)  \, \text{ transient}]>0\, .\] 
Let $\Tr_{1}$ be a realization on which the frog model is transient. Then, there exists some ball $\mathcal{B}$ around the root $o_{1}$ such that no frog awaken outside this ball $\mathcal{B}$ will visit the origin $o_{1}$. Let $\Tr_{2}$ be an independent realization according to $\GW$ and let $\Tr:= \Tr_{1} \connect \Tr_{2}$. 

In the frog model on $(\Tr, o_{1})$  the starting frog jumps into $\Tr_{1}$ at time $n=1$ with positive probability. Now, since every frog is transient,
with positive probability all frogs in the set $\mathcal{B}$ that are woken up will never cross the additional edge $(o_{1}, o_{2})$  and we obtain that $\AGW[ \FM(\Tr, \eta)  \, \text{transient}]>0$.  We write $\AGW_{\infty}[\cdot]:= \AGW[\cdot \, \lvert \, \Tr \, \text{infinite}]$ and define $\GW_{\infty}$ similarly. The 0--1-law gives that $\AGW_{\infty}[\FM(\Tr, \eta)  \, \text{transient}]=0$ implies $\GW_{\infty}[\FM(\Tr, \eta)  \, \text{transient}]=0$.

It remains to show that \[\GW_{\infty}[\FM(\Tr, \eta)  \, \text{recurrent}]>0\] implies that \[\AGW_{\infty}[\FM(\Tr, \eta)  \, \text{recurrent}]>0\, .\] Let $\Tr_{1}$ and $\Tr_{2}$ be two recurrent realizations of $\GW_{\infty}$ and let $\Tr:=\Tr_{1} \connect \Tr_{2}$. Each copy $\Tr_{i}, i\in\{1,2\}$, is recurrent with positive probability. Hence, we have to verify that the possibility that frogs can change from one  $\Tr_{i}$ to the other does not change this property. Let us say that every frog originally in $\Tr_{1}$ wears a red T-shirt and every frog in $\Tr_{2}$ wears a blue T-shirt. Now,  every frog that jumps from $o_{1}$ to $o_{2}$ leaves its red T-shirt in a stack in $o_{1}$. In the same way every frog leaving $o_{1}$ to $o_{2}$ leaves its blue T-shirt in a stack in $o_{2}$. A frog arriving from $o_{1}$ to $o_{2}$ takes a blue T-shirt from the stack. If the stack is empty, the frog ``creates'' a new  blue shirt. We proceed similarly for the frogs that arrive in $o_{1}$ coming from $o_{2}$. The frog model $\FM(\Tr, \eta)$ starts with one awoken frog in a red T-shirt in $o_{1}$. Once a frog visits $o_{2}$, the blue frog model  $\FM(\Tr_{2}, \eta)$ is started and a red shirt is left in $o_{1}$. Conditioned on the event that  $\FM(\Tr_{2}, \eta)$ is recurrent a blue frog will eventually jump from $o_{2}$ to $o_{1}$ and put on the red shirt. In this way, every red shirt is finally put on and the distribution of the red frogs in $\FM(\Tr,\eta)$ equals the distribution of the frogs in $\FM(\Tr_{1},\eta)$ with possible additional frogs. In other words, $\FM(\Tr,\eta)$ is recurrent with positive probability.

Finally, we can conclude
\begin{align*}
&\GW [\FM(\Tr, \eta)\, \text{is transient} \, \lvert \, \Tr \, \text{is infinite}]\in \{0,1\}\, . 
\end{align*}
\end{proof}

\section{Transience of the frog model} \label{transiencephase}
\subsection{No bushes, no stretches} \label{subsec:nobuschnostretch}
We start with considering GW-trees $\T$ with $p_0+p_1=0$. By Lemma \ref{lemmaspectralradii} we know that $\rho(\T)<1$ and hence  Theorem \ref{transiencecriterium} guarantees a transient phase for $\BMC$ on such GW-trees $\T$. Coupling the frog model with an appropriate branching Markov chain implies a transient phase for the frog model.

\begin{lemma} \label{couplingstandard}
Consider a Galton--Watson measure $\GW$ with $p_0+p_1=0$ and $m>1$. Then,  for $\GW$-almost all trees $\Tr$ the frog model with $\eta$ distributed number of frogs per vertex is transient if mean $\bar{\eta} \leq  \frac{d+1}{2\sqrt{d}} -1$ where $d:=min\{k:p_k>0\}$.
\end{lemma}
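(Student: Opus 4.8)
The plan is to dominate the frog model by a suitable branching Markov chain and then invoke the transience criterion of Theorem \ref{transiencecriterium}. First I would fix a realization $\Tr$ with $\rho(\Tr)<1$ (guaranteed for $\GW$-almost every tree by Lemma \ref{lemmaspectralradii} once $p_0+p_1=0$ and $m>1$) and define on it the $\BMC(\Tr,P,\mu)$ whose branching distribution at every vertex $v$ is the law $\mu(v)$ of $1+\eta(v)$; that is, a particle arriving at $v$ is replaced by $1+k$ particles with probability $\Prob[\eta(v)=k]$. Since $\eta$ is i.i.d., each $\mu(v)$ has the same mean $\bar{\mu}=1+\bar{\eta}$, so the equal-mean hypothesis of Theorem \ref{transiencecriterium} is met, and in the nondegenerate case $\bar{\eta}>0$ (the case $\bar{\eta}=0$ reducing to a single transient SRW) we have $\bar{\mu}>1$.

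Next I would build the coupling showing that this BMC stochastically dominates the frog model, so that $\nu$, the number of visits to the root, is bounded above by the number of root visits in the BMC. The idea is to run both processes on the same driving randomness: a single particle starts at the root, and whenever a particle reaches a vertex $v$ for the first time it branches into $1+\eta(v)$ particles, one continuing the incoming trajectory and the remaining $\eta(v)$ playing the role of the frogs woken at $v$. This reproduces the frog model exactly if we branch only at first visits. The BMC, however, also branches at every later visit to $v$, producing strictly more particles; discarding these surplus particles recovers the frog model, so the frog trajectories form a subset of the BMC trajectories and in particular visit the root no more often. Consequently transience of the BMC yields $\fmm[\nu<\infty]=1$, i.e.\ transience of the frog model. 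This is the same mechanism used in \cite{poissonfrogsone, poissonfrogstwo}.

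Finally I would read off the threshold. By Theorem \ref{transiencecriterium} the BMC is transient precisely when $\bar{\mu}\leq 1/\rho(\Tr)$, and by the spectral radius estimate of Lemma \ref{lemmaspectralradii}, which bounds $\rho(\Tr)\leq \frac{2\sqrt{d}}{d+1}$ since every internal vertex of $\Tr$ has at least $d$ children, we have $1/\rho(\Tr)\geq \frac{d+1}{2\sqrt{d}}$. Therefore the hypothesis $\bar{\eta}\leq \frac{d+1}{2\sqrt{d}}-1$ gives $\bar{\mu}=1+\bar{\eta}\leq \frac{d+1}{2\sqrt{d}}\leq 1/\rho(\Tr)$, the criterion applies, and the BMC, hence the frog model, is transient for $\GW$-almost every $\Tr$.

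The hard part will be making the coupling fully rigorous: the waking rule of the frog model is genealogically path-dependent, since a vertex contributes new particles only at its first visit, yet which trajectory arrives first is itself random. One must argue carefully that the natural pairing of frog trajectories with BMC particles is consistent with the shared SRW steps, and that discarding the BMC's surplus (late-visit) offspring indeed leaves a process with the exact frog-model law. Everything else, namely the common branching mean and the spectral radius bound, is routine given the results quoted above.
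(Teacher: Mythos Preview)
Your proposal is correct and follows essentially the same approach as the paper: define the $\BMC(\Tr,P,\mu)$ with offspring law that of $1+\eta(v)$, couple it so that every frog is shadowed by a BMC particle (the paper spells out the pairing at first visits and at simultaneous arrivals in slightly more detail), and then apply Theorem~\ref{transiencecriterium} together with Lemma~\ref{lemmaspectralradii}. The only cosmetic difference is that the paper records $\rho(\Tr)=\frac{2\sqrt{d}}{d+1}$ as an equality rather than the inequality you quote, but only the upper bound is used.
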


\begin{proof}
The proof relies on the fact that the  $\BMC(\Tr,P,\mu)$, where $\mu(v)$ fulfills $\mu_k(v)=\Prob [ \eta(v) +1=k]$ for each $k \geq 1$ and $v\in \Tr$, stochastically dominates the frog model. 
We use a coupling of the frog model with a $\BMC$ such that at most as many frogs (in the frog model) as particles (in the BMC)  visit the root. More precisely, in both models we start with one frog, respectively particle, at the root and couple them. A particle of the $\BMC$ that is coupled to a frog  $x$ in the frog model is denoted by $x'$. The ``additional'' particles in the BMC, in the meaning that they have no counterpart in the frog model, will move and branch without having any influence on the coupling. 
Let $(\f_v)_{v \in \Tr}$ be a realization of the sleeping frogs. If a first coupled particle arrives at $v$ it branches  into $\f_v+1$ particles. The awakened frogs and newly created particles are coupled. If more than one coupled particle arrives at $v$ for the first time at the same moment, we choose (randomly) one of these, let it have  $\f_v+1$ offspring and couple the resulting particles with the frogs as above. The offspring of the other particles (those that are coupled to the remaining frogs arriving at $v$) are chosen i.i.d.\ according to $\mu(v)$ and one of them (randomly chosen) is coupled to each corresponding frog.
Similarly, if a vertex $v\in \Tr$ will be visited a second time by a frog, no new frogs will wake up but the particle will branch again into a random $\mu(v)$ distributed number of particles and we couple the frog arriving at $v$ with one (randomly chosen) of the particles. In this way every awake frog is coupled with a particle of the $\BMC$. Hence if the $\BMC$ is transient, then also the frog model is transient.
The mean offspring $\bar{\mu}$ of $\BMC(\Tr,P,\mu)$ is  constant
\[\bar{\mu}:=\bar{\mu}(v)=\E[\eta(v)]+1=\bar{\eta} +1\, \]
for any $v\in\Tr$ as $\eta(v)$ are independent and identically distributed.
Using Theorem \ref{transiencecriterium} it follows that the $\BMC$ is transient if and only if 
\begin{align*}
\bar{\eta}+1 \leq \frac{1}{\rho(\Tr)} \, .
\end{align*}
By Lemma \ref{lemmaspectralradii} it holds that $\rho(\Tr)= \rho(T_{d+1})=\frac{2\sqrt{d}}{d+1}$, where $d:=min\{k:p_k>0\}$ and $T_{d+1}$ is the homogeneous tree with offspring $d$.
Hence, $\FM(\Tr,\eta)$ is transient if we choose $\eta$ such that it holds
\[\bar{\eta} \leq \frac{1}{\rho(\Tr)} -1\, .\]
\end{proof}

Throughout this section, we shall make frequent use of several known results which we have assembled in Section \ref{appendix_frogs} below in the form of an appendix.

\subsection{No bushes, but  stretches} \label{subsec:nobushesbutstretches}
In the case $p_0+p_1>0$ a direct coupling as in the proof of Lemma  \ref{couplingstandard} does not allow us to prove transience since every non-trivial BMC is recurrent.
This is due to the existence of bushes or stretches in the Galton--Watson tree and the fact that the spectral radius of such trees is a.s.~equal to $1$, see Lemma  \ref{lemmaspectralradii}. We will start with dealing with stretches and then continue with treating bushes and stretches at the same time. 
The case of stretches uses a different method than in Lemma \ref{couplingstandard}. We modify the model, such that we wake up all frogs in a stretch, if the beginning of a stretch is visited for the first time. The awoken frogs are placed  according to the first exit measures (of a SRW) at the ends of this stretch. Moreover we send every frog entering a stretch  immediately to one of the ends of the stretch; again according to the exit measures. This makes it possible to consider the stretch as one vertex. However,  the original length of the stretch is important for the path measure  and the number of frogs. 

Let us explain why we did not succeed to construct a direct coupling between the frog model and a BMC.  The problem results from the frogs entering a stretch. These frogs leave the stretch according to the exit measure. The longer the stretch is, the higher the probability that a frog will return to the point from which it entered the stretch. Now, since the length of the stretches is not bounded,   this probability is not bounded away from $1$, and we can not dominate the frogs with an ``irreducible'' BMC. For this reason, we are comparing only the expectation, and not the whole distribution, of the returning frogs with the expectation of the returning particles of a suitable new BMC.
This new BMC will live on a truncated version of the Galton--Watson tree $\Tr$.
 We will truncate every stretch to a stretch of length at most  $N$. The resulting tree is denoted by $\Tr_{N}$, and the new BMC will live on the truncated tree $\Tr_{N}$ and will be denoted by $\BMC_{N}$. Most of our effort is then to choose the value of $N$ such that the following conditions hold. First, $N$ has to be sufficiently small such the $\BMC_{N}$ is transient on the truncated tree, and, second,  $N$ has to be sufficiently large such that  $\BMC_{N}$ ``dominates''  the frog model that lives on the larger tree $\Tr$.

\begin{proposition} \label{propositionstretches}
Consider a Galton--Watson measure $\GW$ with $0<p_1<1$, $p_0=0$ and mean $m>1$. We assume that  $d_{max}=\max\{i : p_{i}>0\}<\infty$ and set $d_{min}:=\min\{i\geq 2: p_{i}>0\}$. Then, for any choice of  $p_{1}$ there exist constants $c_{d}=c_{d}(p_{1})$ and $c_{\eta}=(c_{d}, d_{max})$ such that for $d_{min}>c_{d}$ the frog model $\FM(\Tr,\eta,\SRW)$ is transient $\GW$-almost surely (conditioned on $\Tr$ to be infinite) if $\bar{\eta} < c_\eta$.
\end{proposition}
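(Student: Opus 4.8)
The plan is to prove Proposition~\ref{propositionstretches} by constructing a dominating branching Markov chain that lives on a truncated tree and whose transience can be certified by Theorem~\ref{transiencecriterium}. Following the strategy outlined in the paper, I would first exploit the equivalence of measures established in Section~\ref{sectionGW}, namely that $\GW$ (with $p_0=0$, $0<p_1<1$) is equivalent to $\ST\times\PER\times\GWbb$. This lets me treat the underlying tree as a stretch-free tree $\T_{\tb\tg}$ (generated by $\GWbb$, with all offspring numbers at least $d_{min}\geq 2$) into which geometrically distributed stretches of length $1+geo(p_1)$ have been inserted at independently chosen vertices. The first reduction is to collapse each stretch into a single ``super-edge'': when a frog first reaches the beginning of a stretch, I wake all sleeping frogs along it and redistribute every frog (woken or entering) directly to one of the two endpoints according to the first-exit measure of the SRW on the stretch. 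Since this only changes timing and can only increase the number of frogs available to return, it does not affect transience, and it reduces the geometry to the stretch-free backbone $\T_{\tb\tg}$ with modified jump and branching data.

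**Building the truncated dominating BMC.** Next I would fix a truncation level $N$ and replace every stretch of length exceeding $N$ by one of length $N$, obtaining $\Tr_N$; on $\Tr_N$ the stretch lengths are bounded, so $\rho(\Tr_N)<1$ by Lemma~\ref{lemmaspectralradii} and Theorem~\ref{transiencecriterium} becomes applicable. The BMC $\BMC_N$ on $\Tr_N$ must be designed so that the \emph{expected} number of particles returning to the root dominates the expected number of returning frogs in the original model. Here the key quantity is the expected number of frogs that a stretch of true (geometric) length $L$ contributes back toward the root: a frog entering a stretch of length $\ell$ returns to its entry endpoint with a probability that grows toward $1$ as $\ell\to\infty$, and the number of sleeping frogs encountered scales with $\ell$. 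I would compute the expected return contribution by averaging over $L\sim 1+geo(p_1)$ (a convergent geometric sum, so finite and explicitly controllable in terms of $p_1$ and $\bar\eta$), and then choose the branching mean $\bar\mu$ of $\BMC_N$ large enough — but still constant across vertices, as required by Theorem~\ref{transiencecriterium} — to absorb this expected return contribution of the untruncated tail beyond level $N$. The finite maximum offspring $d_{max}$ enters to bound the number of edges and hence the total frog budget per collapsed vertex, which is why $c_\eta$ depends on $d_{max}$.

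**Closing the two competing constraints.** The heart of the argument is the two-sided squeeze on $N$: I need $N$ large enough that the expected-return comparison genuinely dominates the frog model (the geometric tail beyond $N$ must contribute negligibly, or be compensated by the enlarged branching), yet $N$ small enough that $\bar\mu\cdot\rho(\Tr_N)\leq 1$ still holds so that $\BMC_N$ is transient. Since $\rho(\Tr_N)$ increases toward $1$ as $N\to\infty$ while the required branching inflation $\bar\mu$ decreases toward its untruncated value as $N\to\infty$, I would show these two monotone requirements are simultaneously satisfiable provided the backbone is ``bushy enough,'' i.e.\ $d_{min}\geq c_d(p_1)$ for a suitable threshold, and provided $\bar\eta<c_\eta$. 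Concretely, I expect to estimate $\rho(\Tr_N)$ from above using a comparison with a homogeneous tree of degree $d_{min}+1$ into which stretches of length at most $N$ are inserted (again via Lemma~\ref{lemmaspectralradii}), obtaining a bound of the form $\rho(\Tr_N)\leq g(d_{min},N,p_1)$ that tends to $2\sqrt{d_{min}}/(d_{min}+1)$ as the stretch influence is damped by large $d_{min}$.

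**The main obstacle.** The genuinely delicate step is \emph{not} the transience criterion itself but the passage from an expectation comparison to an almost-sure transience statement. Because the stretch lengths are unbounded, there is no direct coupling that dominates the full distribution of returning frogs — only their expectation can be compared, as the authors emphasize. So the hard part will be showing that bounding the \emph{expected} number of returns of $\BMC_N$ (and hence of the frog model) by a finite quantity suffices to conclude $\fmm[\nu<\infty]=1$. I would handle this by arguing that finiteness of the expected total number of returns to the root immediately forces $\nu<\infty$ almost surely (a first-moment/Borel--Cantelli argument), and by carefully verifying that the truncation and stretch-collapsing operations only ever \emph{increase} the expected number of returning frogs, so that transience of $\BMC_N$ transfers back to the untruncated frog model on $\Tr$. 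Getting the quantitative thresholds $c_d(p_1)$ and $c_\eta(c_d,d_{max})$ to be mutually consistent — threading the needle between the lower bound on $N$ from domination and the upper bound on $N$ from transience — is where the bulk of the estimation effort will lie.
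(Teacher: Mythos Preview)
Your high-level strategy matches the paper's: collapse stretches via exit measures, truncate at level $N$, build a BMC on the truncated tree, compare expected returns, and thread the two competing constraints on $N$. The first-moment argument in your last paragraph is also exactly how the paper closes.

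However, there is a genuine gap in the middle. You write that you would ``compute the expected return contribution by averaging over $L\sim 1+geo(p_1)$ (a convergent geometric sum, so finite and explicitly controllable).'' This is where the real work lies, and it is not a single geometric average. The expected number of returns decomposes as a sum over all paths $\pa'$ from $o'$ to $o'$ in the collapsed tree, and a given path may visit the \emph{same} stretch many times. Because the length $\ell$ of that stretch is a single random variable shared by all those visits, the $\ST$-expectation does not factor over visits; you cannot average each visit separately and multiply. The paper handles this by classifying path segments into three types (no stretch; traversing a stretch; entering but not traversing) and then bounding, for each fixed stretch, the joint contribution of $k$ traversals and any number of non-traversing visits, uniformly in $k$, before integrating over $\ell$. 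This case analysis (equations (\ref{eq:type2first})--(\ref{eq:6}) in the paper) is the technical heart of the proof and is entirely absent from your proposal.

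A second, related gap: you attribute the lower bound $d_{\min}\geq c_d(p_1)$ solely to the spectral-radius squeeze. In fact a separate constraint $\frac{d_{\min}+1}{d_{\min}}\leq \bar\mu$ arises for a different reason. After a first non-traversing (type-3) visit to a stretch, subsequent type-3 visits meet no sleeping frogs and can be erased from the frog model; but conditioning a frog at $u'$ not to re-enter the stretch changes its transition probability from $1/\deg(u')$ to $1/(\deg(u')-1)$, and the BMC must dominate this inflated step. This is what forces $d_{\min}$ large independently of the spectral-radius estimate, and your outline does not see it. (Minor point: in the stretches-only case the paper notes that $d_{\max}$ is not actually needed; your explanation of why $c_\eta$ depends on $d_{\max}$ is off.)
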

\begin{proof}
Let $\Tr$ be an infinite realization of $\GW$. As $0<p_1<1$ we can consider $\Tr$ constructed according to $\ST\times \PER\times \GWbb$, see Section \ref{sectionGW}. Using this construction we label its vertices in the following way, see also Figure \ref{figurestretch}: 
\begin{itemize}
\item\textbf{label $\lbs$}: a vertex of degree $2$ with a mother vertex  of degree strictly larger than $2$;
\item\textbf{label $\les$}: a vertex of degree $2$ with a  child of  degree strictly larger than $2$;
\item \textbf{label $\ls$}: a vertex of degree $2$ with all two neighbours of degree $2$;
\item\textbf{label $\lno$}: a vertex of degree higher than $2$. 
\end{itemize} 
These labels help us to identify the stretches and their starting and end points. More precisely, a stretch is a path $[v_1,\ldots,v_n]$ where $v_1$ has label $\lbs$ and $v_n$ has label $\les$ and all vertices $v_i,i\in \{2,\ldots,n-1\}$, are labeled with $\ls$.
As mentioned above a $\BMC$ on a GW-tree with $0<p_1<1$ would a.s.\ be  recurrent. To find a dominating $\BMC$, which has a transient phase, we consider two  modified state spaces $\Tr'$ and $\Tr'_{N}$.

\subsubsection*{Construction  of  a dominating frog model $\FM'$ on $\Tr$ and $\Tr'$}
We modify the frog model in the following way.  Frogs in the new $\FM'$ behave as in $\FM$ on vertices that are not in stretches. Once a frog enters a stretch we add more particles in the following way. 
 Let $[v_1,\ldots,v_{\ell}]$ be a stretch of length $\ell=\ell_{v_1}$ and $u$ the mother vertex of $v_1$ and $w$ the child of $v_{\ell}$, see Figure \ref{figurestretch}. Here,  $v_{1}$ is the first vertex in a stretch, i.e.~a vertex with label $\lbs$.  {Now, if a first frog jumps on $v_1$, all frogs from the stretch are activated and placed on $u$ and $v$, respectively, according to their exit measures. For any later visit any frog entering the stretch is immediately  placed on $u$ or $v$ according to the exit measure of the stretch. The exit measures are solutions of a ruin problem.}
 Similar to the proof of Lemma \ref{couplingstandard}, we can couple $\FM$ and $\FM'$ such that
\[\nu \preccurlyeq \nu',\]
where $\nu'$ is the number of visits to the root in $\FM'$, and conclude that transience of $\FM'$ implies transience of $\FM$.

 \begin{figure}[h]
 \centering
 \begin{minipage}[t]{0.35\textwidth}
 \vspace*{0mm}
\includegraphics[width=1\textwidth]{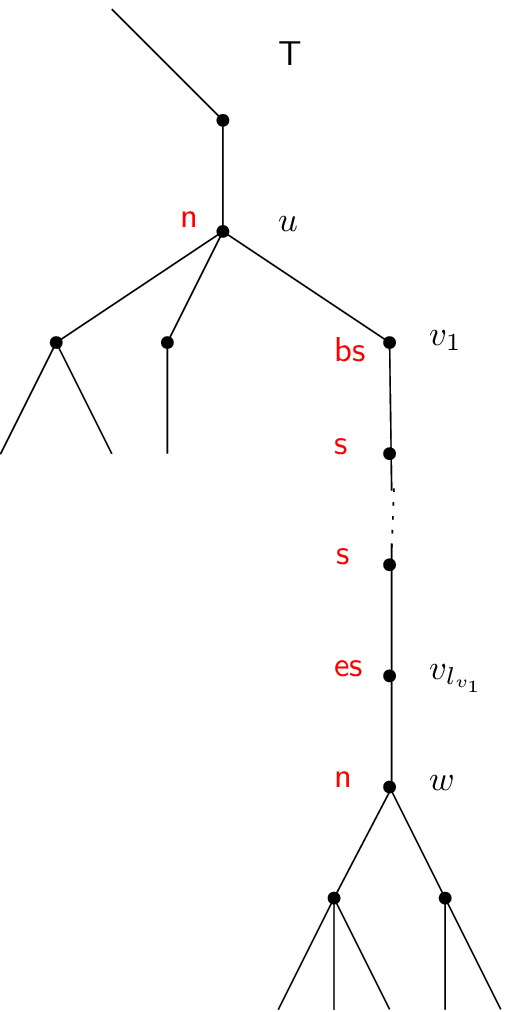}
\end{minipage}
\hfill
\begin{minipage}[t]{0.35\textwidth}
\vspace*{0mm}
\includegraphics[width=1\textwidth]{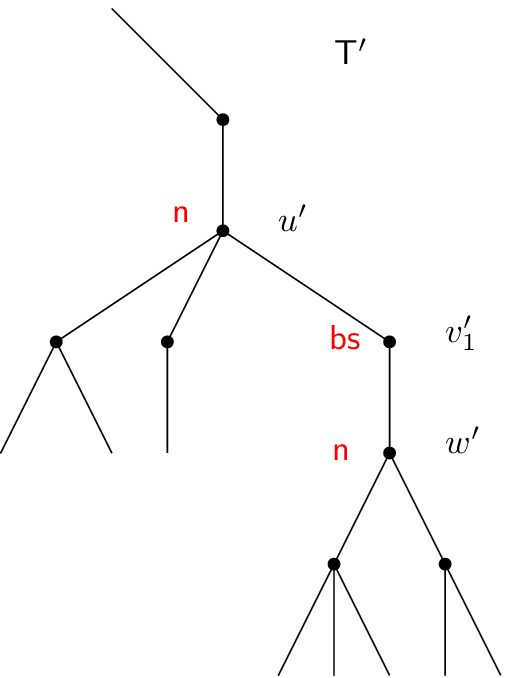}
\end{minipage}
\caption{A stretch $[v_1,\ldots,v_{\ell}]$ and its transformation to one vertex $v_1'$ in $\Tr'$.}
 \label{figurestretch}
\end{figure}

Concerning the stretches, in the definition of $\FM'$ only their ``exit measures'' play a role. The model $\FM'$ can therefore live on the tree $\Tr'$  constructed as follows. Let $[v_1,\ldots,v_{\ell}] \subset \Tr$ be a stretch and $w\in \Tr$ the child of $v_l$. Then, we merge the stretch into the vertex $v_1$ (with label $\lbs$). Hence, there is a single vertex of degree $2$ left in between vertices with higher degree, see Figure \ref{figurestretch}. We identify each vertex $v'\in\Tr'$ with its corresponding vertex $v \in \Tr$ due to this construction.  We can distinguish the vertices of $\Tr'$ into $V_{\mathsf{n}}:=\{v'\in \Tr'\mid v' \, \text{with label} \, \lno\}$ and $V_{\mathsf{s}}:=\{v'\in \Tr'\mid v' \, \text{with label} \, \lbs\}$. This modified state space $\Tr'$ corresponds to the first two stages, namely $\PER\times\GWbb$, in the construction of $\ST\times\PER\times\GWbb$. In other words, it has the same law as $\T_{\pa\times\tb\tg}$.
Moreover, the third step, i.e.~$\ST$, in the construction of the measure is encoded in the length of each stretch. 

We introduce the following  quantities. Let $\nu'(w')$ be the number of visits to $w'$ and $\nu'_n(w')$ the number of particles in  $w'$ at time $n$. Then, for a fixed realization $\Tr'$ let  
$\E^{\Tr'}_{v'}[\nu(w')]$ be the expected number of visits to $w'\in \Tr'$, when the  frog started in $v'\in \Tr'$. We also denote this as
\[m^{\Tr'}_{{\tiny{\FM}}'}(v',w'):=\E^{\Tr'}_{v'}[\nu(w')].\]
The expected number $m^{\Tr'}_{{\FM}'}(v',w')$ depends on the state space $\Tr'$ and we can look at the expected value 
\[m_{\tiny{\FM'}}^{\ST}(v',w'):=\E_{\tiny{\ST}}[m^{\Tr'}_{{\tiny{\FM}}'}(v',w')] \]
with respect to $\ST$ for $v',w' \in \Tr'$. Note here, that the measure $\ST$ has no impact on the underlying tree but only on the number of  frogs and the exit measure from the stretches.  Moreover, it holds that 
\begin{align}
m_{\tiny{\FM'}}^{\ST}(o',o') < \infty  \label{expectedexpectationfiniteFM}
\end{align}
implies
\begin{align}
m^{\Tr'}_{{\tiny{\FM}}'}(o',o') < \infty  \label{expectationfiniteFM}.
\end{align}

\subsubsection*{Construction  of dominating  $\BMC'$ on $\Tr'$}
In the next step we are going to define a branching Markov chain $\BMC'$ on $\Tr'$ such that \begin{align}
\E[\nu_{\tiny{\BMC'}}]<\infty \,\ \Rightarrow\,\ \E_{\tiny{\FM'}}[\nu']< \infty\, , \label{expectedrootreturncomparison} 
\end{align}
where $\nu_{\tiny{\BMC'}}$ is the number of returns to the root of the $\BMC'$.

 We recall that the length  $L$ of a stretch in the original tree $\Tr$ is geometrically distributed; $L\sim geo(p_1)$. Let $L_v, v\in \Tr$, denote this random stretch attached to a vertex $v$ with label $\lbs$.  The presence of arbitrarily long stretches prevents the existence of transient BMC on $\Tr$, see Lemma \ref{lem:unbranched}. Our strategy is to approximate the unbounded stretches with stretches of bounded size.  To do this, we define the tree $\Tr_{N}$ as a copy of $\Tr$ where each stretch of length larger than $N$ is replaced by a stretch of length $N$. Now,  for a given $p_{1}$,  we will find  some value $N\in \N$ such that the frog model on $\Tr$ can be ``bounded'' by a transient BMC on $\Tr_{N}$.
 
 \subsubsection*{Construction  of dominating  $\BMC_{N}$ on $\Tr_{N}$}
We define a BMC, called $\BMC_{N}$, on $\Tr_{N}$, with driving measure $\SRW$ and  offspring distribution  $\mu_k(v)=\Prob [\eta(v) +1=k]$ for each $v\in \Tr_N$. The $\BMC_{N}$, defined on $\Tr_{N}$, defines naturally a branching Markov chain $BMC'_{N}$ on $\Tr'$, where once a particle enters a stretch,  it produces offspring particles according to the exit-measures. This quantity is described by the first visit generating function
\begin{equation}
F_{N}(x,y |z):= \sum_{n=0}^{\infty} f^{(n)}_{N}(x,y) z^{n}. 
\end{equation}
The expected number of particles exiting a stretch of length $\ell$ in the entry vertex is given by $F_{\ell+1}(1,0\lvert \mu)$ while the expected number of particles exiting the stretch in the other vertex is given by $F_{\ell+1}(1,\ell+1\lvert \mu)$; 
 we refer to Subsection  \ref{sec:absBMC} for more details.
 The aim is now to find some integer $N$ such that $\BMC'_{N}$ is still transient and dominates (in $\ST$-expectation) the frog model $\FM'$. 
 
To find such a domination, we compare the mean number of visits  ``path-wise'' in $\FM'$ and $\BMC'_{N}$.  More precisely,  we want to express the quantity $\nu'_n(o')$ in terms of frogs following a specific path.  Let $\pa'$ be a path starting and ending at $o'$. A path of length $n\in \N$ looks like $\pa'=[o',p_1',p_2',\ldots, p'_{n-1},o']$ with $p'_i\in \Tr'$ and $p'_i\sim p'_{i+1}$ for each $i$. Let $\theta_k$ denote the $k$-th cut of a path, that is $\theta_k(\pa'):=[p_k',\ldots, o']$. We call a frog sleeping at some $p_i', 1\leq i \leq n-1$, {activated by frogs following the path $\pa'$} ($\mathsf{affb}_{\pa'}$), if inductively the frog was activated from a frog in $p_{i-1}$ that was activated by frogs following the path $\pa'$ or started at $p_1$ and followed $\pa'$. We denote by $\mathsf{affb}_{\pa'}(v',i)$ for the event that the $i$th frog  in $v'$ is $\mathsf{affb}_{\pa'}$. 
Additionally, for $i,j \in \N$ let  $S_j(v',i)$ denote the position of the $i$-th frog initially placed at $v' \in \Tr'$  after $j$ time steps after waking up. (Here we assume an arbitrary enumeration of the frogs at each vertex.)
Using this, $\nu'_n(o')$ is equal to
\begin{align*}
\left| \bigcup_{\lvert \pa'\lvert =n} \bigcup_{p'_i \in \pa'} \bigcup_{r\in \N}A(p'_{i},r,\pa)\right|\, .
\end{align*}
where
\begin{equation*}
A(p'_{i},r,\pa'):=\left\lbrace\exists  k :  \{S_j(p_i',r)\}_{j=0}^{n-k}=\theta_{k}(\pa') \, \text{and} \, \mathsf{affb}_{\pa'}(p'_{i},r)   \right\rbrace.
\end{equation*}
Now, we can rewrite
\begin{align}
&m_{\tiny{\FM'}}^{\ST}(o',o') =\E_{\tiny{\ST}}\left[\E^{\Tr'}_{o'}\left[\nu'\right]\right] = \E_{\tiny{\ST}}\Biggl[\E^{\Tr'}_{o'}\biggl[\ \sum\limits_{n=1}^{\infty} \nu'_n(o') \biggr] \Biggr] \\
&=\sum\limits_{n=1}^{\infty}\E_{\tiny{\ST}}\left[\E^{\Tr'}_{o'}\Biggl[\ \sum_{\lvert\pa'\lvert =n}  \biggl| \biggl\lbrace  \bigcup_{p'_i \in \pa} \bigcup_{r\in \N}A(p'_{i},r,\pa') \biggr\rbrace\biggr|\,  \Biggr]\right]  \\
&=\sum\limits_{n=1}^{\infty} \sum_{\lvert\pa'\lvert =n} \E_{\tiny{\ST}}\left[\E^{\Tr'}_{o'}\Biggl[ \, \biggl| \biggl\lbrace \ \bigcup_{p'_i \in \pa'} \bigcup_{r\in \N}A(p'_{i},r,\pa')\biggr\rbrace\biggr|\,  \Biggr]\right] \, \label{pathlengthn}
\end{align}  
by using the monotone convergence theorem.
For a given path $\pa'$ the term 
\[
\nu_{\tiny{\ST}}'(\pa'):=\E_{\tiny{\ST}}\left[\E^{\Tr'}_{o'}   \Biggl[ \, \biggr| \biggl\lbrace \ \bigcup_{p'_i \in \pa'} \bigcup_{r\in \N}A(p'_{i},r,\pa') \biggr\rbrace\biggr|\,  \Biggr]\right]
\] equals the expected number of frogs that were activated following the path and that follow the paths after their activation. In the same way as for the frog process we can define the expected number of particles $\nu_{\tiny{\BMC}}(\pa')$ for a BMC following a path $\pa'$. In the remaining part of the proof we construct a branching Markov chain $\BMC'_N$ such that 
\begin{equation}
\nu_{\tiny{\ST}}'(\pa')\leq \E\left[\nu_{\tiny{\BMC'_N}}(\pa')\right]
\end{equation} for all paths $\pa'$. Transience of the BMC then implies transiences of the frog model. 
The paths $\pa'$ are concatenations of three different types of vertex sequences. \textit{Type $1$} is a sequence that does not see any stretches. A sequence of \textit{type $2$} traverses a stretch, whereas a sequence of \textit{type $3$} visits a stretch but does not traverse it. We will split each path $\pa'$ into these three types and give upper bounds for (\ref{pathlengthn}) for each type separately. We have to take into account that multiple visits of the same sequence of vertices are not independent from each other. Here the frogs face in every visit the same length of a stretch. Hence, while taking the expectation over the length of the stretches, multiple visits of the same vertices have to be considered at the same time. Therefore, we give upper bounds of (\ref{pathlengthn}) for each combination of multiple visits. Then, we combine the results for a final upper bound of a mixed path.

For this purpose we consider for the $\BMC$ the mean number of particles created in stretches in $\Tr_{N}$. We consider the situation described in Figure \ref{figurestretch}. Let $\ell$ be the length of a stretch generated according to $\ST$. Such a stretch appears in $\Tr_N$ with probability $p_1^{\ell-1} (1-p_1)$ if $\ell < N-1$ and with probability $p^{N-1}$ if $\ell=N-1$. We denote by $m_{\tiny{\BMC'_N}}^{\Tr'}(p'_{i},p'_{i+1}) $ the expected number of particles arriving in $p'_{i+1}$ while starting in $p'_i$. Again we can look at the expectation with respect to $\ST$ 
\[m_{\tiny{\BMC'_N}}^{\ST}(p'_{i},p'_{i+1})=\E_{\tiny{\ST}}\left[ m_{\tiny{\BMC'_N}}^{\Tr'}(p'_{i},p'_{i+1})  \right],  \]
where $\ST$ impacts only the number of created particles and not the underlying tree.
We define the vertices $u$ and $w$ as absorbing and denote by $\eta_{N}(u)$ (resp.\  $\eta_{N}(w)$) the number of particles absorbed in $u$ (resp.\ in $w$), see also Section \ref{sec:A1}.

\textbf{Only visits of type 1}: We assume that $\pa'=[p'_{0},p'_1,p'_2,\ldots, p'_{n-1}, p'_{n}]$ only consists of sequences of type 1. Using the Markov property  we can bound
\begin{align}\label{pathlengthn2}
\nu'(\pa')& \leq \prod_{i=0}^{n-1 }m_{\tiny{\FM'}}^{\ST}(p'_{i},p'_{i+1}) = \prod_{i=0}^{n-1 }m_{\tiny{\BMC'_N}}^{\ST}(p'_{i},p'_{i+1}) ,
\end{align}
due to the choice of the $\BMC'_N$, see the paragraph after Equation (\ref{expectedrootreturncomparison}).

\textbf{Multiple visits of a stretch in sequences of type 2:} We assume that the path also has some sequences of type 2, see Figure \ref{figure:path_case3}.
 \begin{figure}[ht]
 \centering
\includegraphics[width=0.7\textwidth]{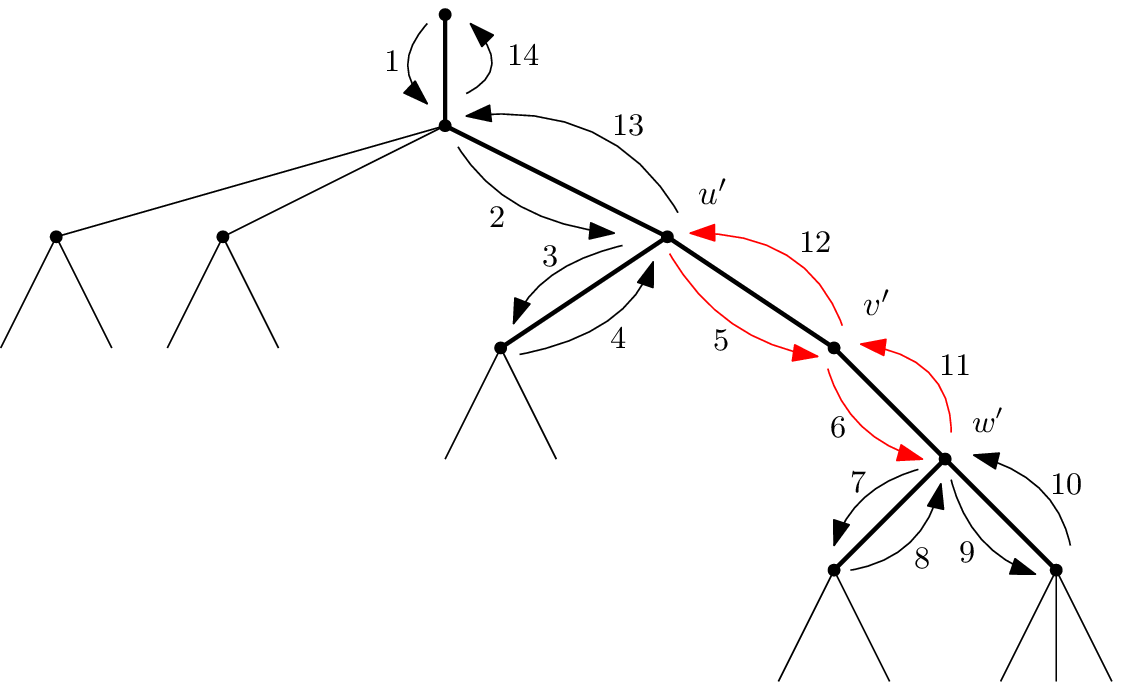}
\caption{A typical path with a sequence of type 2.}
 \label{figure:path_case3}
\end{figure}
An important observation is that every path from $o'$ to $o'$ that traverses a stretch in one direction has to traverse it in the other direction as well. Hence, such a path in $\Tr'$ of length $n$ has for example the form
\[
\pa'= [o', \underbrace{ p'_1,\ldots,p'_{i},u'}_{\text{degree} \geq 3}, v',\underbrace{w',p'_{i+4},\ldots,p'_{j},w'}_{\text{degree}\geq 3},v',\underbrace{u',p'_{j+4},\ldots,p'_{n-1}}_{\text{degree}\geq  3}, o'] ,
 \]
where $v'$ has degree $2$. We start with the case where the stretch is visited twice. The case of more visits will be an immediate consequence.

{In order to bound the expected number of frogs along a path we define
$m^{\Tr'}_{\tiny{\FM'}}(u'\rightarrow v' \rightarrow w') $ as the expected number of frogs that follow the path $[u', v', w']$ in $\FM'$ starting with one frog in $u'$. The modified frog model $\FM'$ is defined such that all frogs in the stretch are woken up and distributed at the end of the stretches if the starting vertex of the stretch is visited. In the case of traversing a stretch, this is dominated by the following modification: if the frog jumps on $v_{1}$ from $u$ the first time we start a BMC in $v_{1}$ with offspring distribution $\eta+1$ and absorbing states $u$ and $w$. The mean number of frogs absorbed in $u$ and $w$ can be calculated using Lemmata  \ref{lemma:Z-infty-formel} and \ref{lem:F}. This dominates $\FM'$ since we consider a path traversing the stretch. This means that all vertices in the stretch were visited in the new model, since some particles arrived in $w'$ and we can couple the sleeping frogs in $\FM'$ with the created particles in the stretch. We conclude by Lemma \ref{lemma:Z-infty-formel}  that}
\begin{equation}
m^{\Tr'}_{\tiny{\FM'}}(u'\rightarrow v' \rightarrow w') \leq \frac{1}{deg(u')} F_{\ell+1}(1, \ell+1 \mid {\bar{\eta}+1}),
\end{equation}
where $\ell=\ell_{v'}+1$ is the length of the total stretch (including the initial point of the stretch). To take into account that at the second traversal of the stretch no sleeping frogs are left in the stretch we define $m^{\Tr', 2+}_{\tiny{\FM'}}(w'\rightarrow v' \rightarrow u') $ as the expected number of frogs following $[w', v', u']$ with no frogs between $w'$ und $u'$. Hence,
\begin{align}
m^{\Tr'}_{\tiny{\FM'}}(u'\rightarrow v' \rightarrow w')  m^{\Tr', 2+}_{\tiny{\FM'}}(w'\rightarrow v' \rightarrow u') &\leq \frac{1}{deg(u')} F_{\ell+1}(1, \ell +1 \mid {\bar{\eta}+1}) \frac{1}{deg(w')} \frac{1}{\ell+1} \\
& \leq  \frac{1}{deg(u')} F_{\ell+1}(1, \ell+1 \mid  {\bar{\eta}+1})^{2} \frac{1}{deg(w')}. \label{eq:type2first}
\end{align}
Note that the last term equals the mean number of particles ending at $u'$
in  $\BMC'_{\ell}$, starting with one particle in $u'$ and following the path $[u', v', w',v',u']$.   

Now, we want to find an $N\in \N$ such that we can dominate a visit of type 2 to a stretch by the $\BMC_N$, that is, we  want $N \in \N$ such that it holds
\begin{align}
m^{\Tr'}_{\tiny{\FM'}}(u'\rightarrow v' \rightarrow w')  & m^{\Tr', 2+}_{\tiny{\FM'}}(w'\rightarrow v' \rightarrow u')\\
& \leq  \frac{1}{deg(u')} F_{N+1}(1, N+1 \mid  {\bar{\eta}+1})^{2} \frac{1}{deg(w')} \label{eq:type2N}
\end{align}
for all possible lengths $\ell$ of a stretch in $\Tr$. For this purpose we consider
\begin{align}
m^{\Tr'}_{\tiny{\FM'}}(u'\rightarrow v' \rightarrow w')  m^{\Tr', 2+}_{\tiny{\FM'}}(w'\rightarrow v' \rightarrow u') &\leq \frac{1}{deg(u')} \left(\frac{\ell \bar{\eta} }2 + \frac{1}{\ell+1} \right) \frac{1}{deg(w')} \frac{1}{\ell+1}.  
\end{align}
We have a lower bound for the right hand side of (\ref{eq:type2N}) from Lemma \ref{lem:Fapprox}. Omitting the transition probabilities from $u'$ to $v'$ and from $w'$ to $v'$ we obtain for $\varphi= \arccos(\frac{1}{1+\bar{\eta}}) >0$  that
\begin{align}
F_{N+1}(1, N+1 \mid  {\bar{\eta}+1})^{2} &\geq \left(\frac{1}{N+1} \left(1+ \frac{ \left(2N+N^2\right)\varphi^{2}}{3!}\right) \right)^2 \\
& \geq \left(\frac{1}{N+1}\right)^2 +  \frac13 N \varphi^{2}.
\end{align}
Hence, we have to find $\bar \eta$ and $N$ such that 
\begin{equation}\label{eq:type2N1}
 \frac{\ell \bar{\eta} }2  \frac{1}{\ell+1} + \left(\frac{1}{\ell+1} \right)^{2} \leq \left(\frac{1}{N+1}\right)^2 +  \frac13 N \varphi^{2}.
\end{equation}
for all $\ell >N$. Note that we want that the $\BMC_{N}$ is transient and therefore that 
\begin{equation}
\varphi < \frac{\arccos \left(\frac{{2}\sqrt{d}}{d+1}\right)}{N+1},
\end{equation}
with $d=\min\{i\geq 2: p_i> 0\}$; this is a consequence of Lemma \ref{lemmaspectralradii},  Theorem   \ref{thm:rhoN}, and Lemma \ref{lem:specapprox}.  Let $\gamma  < \arccos\left( \frac{{2}\sqrt{d}}{d+1}\right)$ be such that
\begin{equation}
\bar \eta = \left( \left( \cos \frac{\gamma}{N+1}   \right)^{-1}  -1  \right).
\end{equation}
Inequality (\ref{eq:type2N1}) holds if 
\begin{equation}\label{eq:type2N2}
\frac12  \left( \left( \cos \frac{\gamma}{N+1}   \right)^{-1}  -1  \right) \leq \frac13 N 
\left(\frac{\gamma}{N+1}\right)^{2}.
\end{equation}
Using L'Hospital's rule we see that for any choice of $\gamma$ the inequality above is true for $N$ sufficiently large. 	
 In the case of multiple type 2 visits the proof is a immediate consequence of only one type 2 visit. For $\ell > N$ it holds 
\begin{align}
m^{\Tr'}_{\tiny{\FM'}}(u'\rightarrow &v' \rightarrow w')  m^{\Tr', 2+}_{\tiny{\FM'}}(w'\rightarrow v' \rightarrow u')^k m^{\Tr',2+}_{\tiny{\FM'}}(u'\rightarrow v' \rightarrow w')^{k-1} \\ 
&\leq \left(\frac{1}{deg(u')}\right)^k\left(\frac{\ell \bar{\eta} }2 + \frac{1}{\ell+1} \right) \left(\frac{1}{deg(w')}\right)^k \left(\frac{1}{\ell+1}\right)^{2k-1}
\\ 
&\leq \left(\frac{1}{deg(u')}\right)^k\left(\frac{\ell \bar{\eta} }2 + \frac{1}{\ell+1} \right) \left(\frac{1}{deg(w')}\right)^k \left(\frac{1}{\ell+1}\right)  \left(\frac{1}{N+1}\right)^{2k-2} \label{eq:typetwomultiple}
\end{align}
The probability that a simple random walk enters the stretch of length $N+1$ and reaches the other side is $1/(N+1)$. This quantity is naturally dominated by $F_{N+1}(1, N+1 \mid  {\bar{\eta}+1})$  since in the BMC at least one particle moves according to the simple random walk in the stretch. Using this we give an upper bound for (\ref{eq:typetwomultiple}) that holds for $N$ sufficiently large:
\begin{align}
&\left(\frac{1}{deg(u')}\right)^k  \left(\frac{\ell \bar{\eta} }2 + \frac{1}{\ell+1} \right) \left(\frac{1}{deg(w')}\right)^k \left(\frac{1}{\ell+1}\right)  \left(\frac{1}{N+1}\right)^{2k-2} \\
& \leq \left(\frac{1}{deg(u')}\right)^k  \left(\frac{\ell \bar{\eta} }2 + \frac{1}{\ell+1} \right) \left(\frac{1}{deg(w')}\right)^k  \left(\frac{1}{\ell+1}\right)    F_{N+1}(1, N+1 \mid  {\bar{\eta}+1})^{2k-2}  \\
& \leq \left(\frac{1}{deg(u')}\right)^k  \left(\frac{1}{deg(w')}\right)^k F_{N+1}(1, N+1 \mid  {\bar{\eta}+1})^{2k}  \\
& \leq m^{\Tr'}_{\tiny{\BMC'_N}}(u'\rightarrow v' \rightarrow w')^k m^{\Tr'}_{\tiny{\BMC'_N}}(u'\rightarrow v' \rightarrow w')^k,
\end{align}\label{eq:multiple:type2}
where $m^{\Tr'}_{\tiny{\BMC'_N}}(u'\rightarrow v' \rightarrow w')$ is the  expected number of particles following the path $[u', v', w']$ in $\BMC'_N$ starting with one particle in $u'$. 
Using the aforegoing estimate, we can bound $\E^{\ST}[m^{\Tr'}_{\tiny{\FM'}}(u'\rightarrow v' \rightarrow w')  m^{\Tr', 2+}_{\tiny{\FM'}}(w'\rightarrow v' \rightarrow u')^k m^{\Tr',2+}_{\tiny{\FM'}}(u'\rightarrow v' \rightarrow w')^{k-1} ]$  by $\E_{\ST}[m^{\Tr'}_{\tiny{\BMC'_N}}(u'\rightarrow v' \rightarrow w')^k m^{\Tr'}_{\tiny{\BMC'_N}}(w'\rightarrow v' \rightarrow u')^k]$ for  $N\in \N$ sufficiently large.
Moreover  the stretches are independently generated.  We obtain by induction for different sequences of type 2 that:
\begin{align}\label{pathlengthnganzBMC}
\nu'(\pa')& \leq\prod_{i=0}^{n-1 }m_{\tiny{\BMC'_N}}^{\ST}(p'_{i},p'_{i+1}) 
\end{align}
using the short notation $\E_{\ST}[m^{\Tr'}_{\tiny{\BMC'_N}}(u', w')]$ instead of $\E_{\ST}[m^{\Tr'}_{\tiny{\BMC'_N}}(u'\rightarrow v' \rightarrow w')]$.

\textbf{Multiple visits of a stretch in sequences of type 2 and 3:} 
We handle this situation in three steps. In the first we assume, that a sequence of vertices is only visited once in the manner of type 3. Secondly, we  treat a sequence of a path which visits a stretch more than once in the manner of type 3. Lastly, we study sequences which are visited by type 2 and type 3 sequences. There, we have to distinguish between the type of the first visit of the sequence.

We start with the first part. 
We assume that the path $\pa$ of length $n$ contains a  sequence of type 3, that is $p'_{i_j}=v'$,$i_j \in \{1,\ldots,n\}$, of degree $2$ and $p_{i_j-1}=p_{i_j+1}=u'$, see Figure \ref{figure:path_case2}. 
 \begin{figure}[ht]
 \centering
\includegraphics[width=0.70\textwidth]{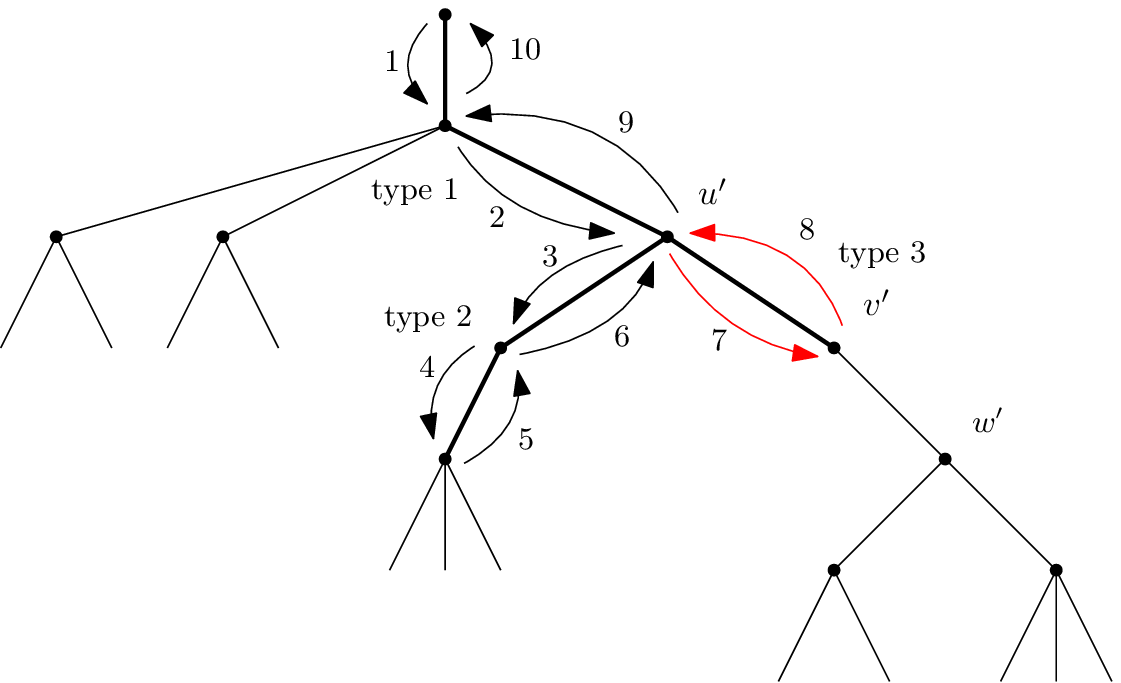}
\caption{A typical path with sequences of type 1,2 and 3.}
 \label{figure:path_case2}
\end{figure}
This means that the frogs in $\FM'$ did not pass the stretch completely. We call these parts of the path stretchbits.  A typical path $\pa$ in this case can be for example

\[
\pa'= [o',\underbrace{p'_1,\ldots,p'_{i_1-2}}_{\text{type 1,2}},\overbrace{u', v',u'}^{\text{type 3}},\underbrace{p'_{i_1+2},\ldots, p'_{n-1}}_{\text{type 1,2}}, o']\, .
\]

We define  $m^{\Tr'}_{\tiny{\FM'}}(u'\rightarrow v' \rightarrow u') $ as the expected number of frogs that follow the path $[u', v', u']$ in $\FM'$ starting with one frog in $u'$. Then 

\begin{align}\label{eq:caseupperbound}
m^{\Tr'}_{\tiny{\FM'}}(u'\rightarrow v' \rightarrow u')   & \leq \frac{1}{deg(u')}\left(\frac{\ell \bar{\eta} }2 + \frac{\ell}{\ell+1} \right).
\end{align}
Recall that the distribution of the total stretch length $L=\ell_{v_1}+1$ is geometric:
$$
\Prob(L=\ell)=p_{1}^{\ell-1} (1-p_{1}),~\forall ~\ell\geq 1.
$$ 
Hence, integrating (\ref{eq:caseupperbound}) with respect to $\ST$ yields 
\begin{align}
m^{\ST}_{\tiny{\FM'}}(u'\rightarrow v' \rightarrow u')   & \leq  \frac{1}{deg(u')} \sum_{\ell=1}^{\infty} \left(\frac{\ell \bar{\eta}}{2} + \frac{\ell}{\ell+1} \right) p_{1}^{\ell-1} (1-p_{1})   \\
& = \frac{1}{deg(u')} \left( \frac{\bar{\eta}}{2(1-p_{1})}+ \sum_{\ell=1}^{\infty} \left( \frac{\ell}{\ell+1} \right) p_{1}^{\ell-1} (1-p_{1}) \right).  
\end{align}
 Let $d=\min\{i\geq 2: p_i> 0\}$. A stretch of length $\ell$ is equivalent to an unbranched path of length $\ell +1$ in Section \ref{subsec:spectralradiusoftrees}. As we only allow a maximum stretch length $N$ in case of $\BMC_N$, we obtain at maximum an unbranched path of length $N+1$. Then, using Lemma \ref{lemmaspectralradii},  Theorem   \ref{thm:rhoN}, and Lemma \ref{lem:specapprox}  the spectral radius $\rho(P_{N+1})$ on the absorbing stretch piece  of length $N+1$ satisfies
\begin{equation} \rho(P_{N+1}) <   \cos\left( \frac{\arccos \left(\frac{{2}\sqrt{d}}{d+1}\right)}{N+1}\right).
\end{equation}
Furthermore,
\begin{align}\label{eq:caseBMC}
& m^{\Tr'}_{\tiny{\BMC'_N}}(u'\rightarrow v' \rightarrow u') =  \frac{1}{deg(u')} F_{\ell+1}(1, 0 | \bar \mu).
\end{align}
We now choose 
\begin{equation}
\bar \mu= \cos\left( \frac{\arccos \left(\frac{{2}\sqrt{d}}{d+1}\right)}{N+1}-\varepsilon\right)^{-1}  
\end{equation} for some sufficiently small $\varepsilon>0$ and define
$$
g(\ell) = F_{\ell}\left(1, 0 \middle| \bar{\mu}\right)<\infty.
$$
Observe here that, since $\bar \mu < \frac{1}{\rho(\Tr')}$, the $\BMC'_N$ with mean offspring $\bar \mu$ is not only transient but it also holds that $\E_{\BMC'_N}[\nu]<\infty$, see Chapter 5.C in \cite{woess}.
Now, integrating equation (\ref{eq:caseBMC}) with respect to $\ST$ yields
\begin{align}
& m^{\ST}_{\tiny{\BMC'_N}}(u'\rightarrow v' \rightarrow u') =   \frac{1}{deg(u')} \sum_{\ell=1}^{N-1} g(\ell+1) p_{1}^{\ell-1} (1-p_{1}) + g(N+1) p_{1}^{N-1}.
\end{align}
We now look for $\bar{\eta}$ sufficiently small and $N$ sufficiently large such that 
\begin{align}
&\left( \frac{\bar{\eta}}{2}\left(\frac{1}{1-p_1}\right)+ \sum_{\ell=1}^{\infty} \left( \frac{\ell}{\ell+1} \right) p_{1}^{\ell-1} (1-p_{1}) \right)    \\
& \quad \quad  < \sum_{\ell=1}^{N-1} g(\ell+1) p_{1}^{\ell-1} (1-p_{1}) + g(N+1) p_{1}^{N-1}. \label{eq:comparisontype3}
\end{align}
In order to achieve this last inequality, it suffices to find an $N$ such that
\begin{align}
\sum_{\ell=1}^{\infty} \left( \frac{\ell}{\ell+1} \right) p_{1}^{\ell-1} (1-p_{1}) < \sum_{\ell=1}^{N-1} g(\ell+1) p_{1}^{\ell-1} (1-p_{1}) + g(N+1) p_{1}^{N-1}. \label{eq:stretchesfirstcomp}
\end{align}
By Lemma \ref{lem:Fapprox} we can bound the right hand side from below by 
\begin{align}
\sum_{\ell=1}^{N-1} \frac{\ell}{\ell+1} \left(1+ \frac{ \left(1+2\ell\right)\varphi^{2}}{3!}\right) p_{1}^{\ell-1} (1-p_{1}) + g(N+1) p_{1}^{N} \,  
\end{align}
where $\varphi=\arccos(1/ \bar\mu)$.
This reduces (\ref{eq:stretchesfirstcomp}) to:
\begin{align}
\sum_{\ell=N}^{\infty} \left( \frac{\ell}{\ell+1} \right) p_{1}^{\ell-1} (1-p_{1}) < \sum_{\ell=1}^{N-1} \frac{\ell}{\ell+1} \left(\frac{ \left(1+2\ell\right)\varphi^{2}}{3!}\right) p_{1}^{\ell-1} (1-p_{1}) + g(N+1) p_{1}^{N-1}\, . \label{eq:approx}
\end{align}
The left hand side of (\ref{eq:approx}) decays exponentially in $N$ while the first part of the right hand side has polynomial decay in $N$ having the choice of $\varphi$ in mind. Therefore, there exists some $N$ such that (\ref{eq:approx}) is verified.

We continue with the second part, where a sequence of the path faces multiple type 3 visits.
If a frog makes a second type 3 visit to an already woken up stretch, this frog encounters no new frogs and returns to $u'$ almost surely. This follows for every other visit of type 3. Hence, conditioning the frog upon not making another type 3 visit to a stretch has no influence on the possible frogs returning to the root and consequently on transience and recurrence. We will call this model $\FM''$. But we notice that the path measure changes when we change to $\FM''$:
\begin{align}
\Pa[u'\rightarrow y'\, \lvert \, \text{no visit to} \, v'  ]=\frac{1}{deg(u')-1}  
\end{align}
where $y'$ is any neighbour of $u'$ apart from $v'$. Since the path measure of $\BMC_N$ is unchanged we have to compare 
\[m_{\tiny{\BMC'_N}}^{\Tr'}(u',y')=\frac{\bar{\mu}}{deg(u')}\] 
and
  \[m_{\tiny{\FM''}}^{\Tr'}(u',y')=\frac{1}{deg(u')-1}\, \] 
as $u'$ was visited already by assumption and obtain
\begin{align}
\frac{1}{deg(u')-1} \leq  \frac{\bar{\mu}}{deg(u')} \Longleftrightarrow \frac{deg(u')}{deg(u')-1} \leq {\bar{\mu}} \, .  
\end{align}
We conclude for the mean offspring $\bar{\mu}$ of $\BMC_N$ that a necessary condition for our majorization is 
\begin{align}
\frac{d_{min}+1}{d_{min}}\leq {\bar{\mu}} \, \label{eq:erasecomp}
\end{align}
with $d_{min}:=\min\{k\geq 2:p_k>0\}$ is a necessary condition for our majorization. Using the new model $\FM''$ we are left with only the first visit of type 3 to the stretch. As we have seen before, there is a $N$ such that (\ref{eq:approx}) holds.

Now, we will treat the third part, where we allow multiple visits of type 2 and 3 to a sequence of vertices. 
We want to erase again multiple visits of type 3 of a stretch and assume, that (\ref{eq:erasecomp}) holds, such that the $\BMC_N$ dominates the conditioned path. Then it remains to deal with either a first visit of type 2 or a first visit of type 3 and multiple visits of type 2. If the first visit is of type 2, we can bound the frog model by using (\ref{eq:multiple:type2}) additionally to (\ref{eq:erasecomp}). 

If the first visit is of type 3, and we have apart from other visits of type 3 (which will be erased and bounded using (\ref{eq:erasecomp})) $k$ visits and returns of type 2, we obtain
\begin{align}
&\E_{\ST}\left[m^{\Tr'}_{\tiny{\FM'}}(u'\rightarrow v' \rightarrow u')  m^{\Tr', 2+}_{\tiny{\FM'}}(w'\rightarrow v' \rightarrow u')^k m^{\Tr',2+}_{\tiny{\FM'}}(u'\rightarrow v' \rightarrow w')^{k}\right]    \\
&=  \left(\frac{1}{deg(u')}\right)^{k+1} \E_{\ST}\left[\left(\frac{\ell \bar{\eta}}{2} +\left(\frac{\ell}{\ell+1}\right) \right)\left(\frac{1}{\ell +1}\right)^{2k}  \right] \left(\frac{1}{deg(w')}\right)^k \, .  
\end{align}
For the upcoming equations we omit the factors of the transitions probabilities from $u'$ to $v'$ and from $w'$ to $v'$. These probabilities are the same for the BMC  and do not play a role for the comparison with the frog model. Then we get:
\begin{align}
&\sum_{\ell=1}^{\infty} \frac{\ell \bar{\eta}}{2} \left(\frac{1}{\ell+1}\right)^{2k}p_1^{\ell-1} (1-p_1)+ \sum_{\ell=1}^{\infty} \left(\frac{\ell}{\ell+1}\right) \left(\frac{1}{\ell +1}\right)^{2k}  p_1^{\ell-1} (1-p_1)    \\
&\leq\frac{ \bar{\eta} }{2} \left( \frac{1-p_1}{p_1^2}\right) \sum_{\ell=1}^{\infty} \frac{p_1^{\ell+1}}{(\ell+1)^{2k-1}} + \sum_{\ell=1}^{\infty} \left(\frac{\ell}{\ell+1}\right) \left(\frac{1}{\ell +1}\right)^{2k}  p_1^{\ell} (1-p_1).\label{eq:finalmultipletype23FM}
\end{align}
For the BMC we have the following identities as before:
\begin{align}
& \E_{\ST} \left[m^{\Tr'}_{\tiny{\BMC'_N}}(u'\rightarrow v' \rightarrow u') m^{\Tr'}_{\tiny{\BMC'_N}}(u'\rightarrow v' \rightarrow w')^k m^{\Tr'}_{\tiny{\BMC'_N}}(w'\rightarrow v' \rightarrow u')^k \right]     \\
&=   \left(\frac{1}{deg(u')}\right)^{k+1} \E_{\ST}\left[F_{\ell+1}(1,0\lvert \bar{\mu})^{2} F_{\ell+1}(\ell,\ell+1\lvert \bar{\mu})^{2k}\right] \left(\frac{1}{deg(w')}\right)^{k}  
\end{align}
By Lemma \ref{lem:Fapprox} (and again omitting the transitions probabilities) this is greater or equal to 
\begin{align}
&\sum_{\ell=1}^{N-1} \frac{\ell}{\ell+1} \left(\frac{1}{\ell+1}\right)^{2k} \left(1+ \frac{ \left(1+2\ell\right)\varphi^{2}}{3!}\right) p_{1}^{\ell-1} (1-p_{1})    \\
 &\qquad  \qquad + \frac{N}{N+1} \left(\frac{1}{N+1}\right)^{2k} \left(1+ \frac{ \left(1+2N\right)\varphi^{2}}{3!}\right) p_{1}^{N}.
 \label{eq:finalmultipletype23BMC}
\end{align}
We want to show that we can choose for each $p_1$ and $N$ an $\eta$ such that the following holds for all $k\geq 1$: 
\begin{align}
&\frac{ \bar{\eta} }{2} \left( \frac{1-p_1}{p_1^2}\right) \left(  \sum_{\ell=1}^{N-1} \frac{p_1^{\ell+1}}{(\ell+1)^{2k-1}}+ \sum_{\ell=N}^{\infty} \frac{p_1^{\ell+1}}{(\ell+1)^{2k-1}}\right)\label{eq:1}\\
&\qquad + \sum_{\ell=1}^{N-1} \left(\frac{\ell}{\ell+1}\right) \left(\frac{1}{\ell +1}\right)^{2k}  p_1^{\ell-1} (1-p_1)\label{eq:2} \\
&\qquad + \sum_{\ell=N}^{\infty} \left(\frac{\ell}{\ell+1}\right) \left(\frac{1}{\ell +1}\right)^{2k}  p_1^{\ell-1} (1-p_1) \label{eq:3}\\
&\leq \sum_{\ell=1}^{N-1} \frac{\ell}{\ell+1} \left(\frac{1}{\ell+1}\right)^{2k} p_{1}^{\ell-1} (1-p_{1}) \label{eq:4}\\
&\qquad + \sum_{\ell=1}^{N-1} \frac{\ell}{\ell+1} \left(\frac{1}{\ell+1}\right)^{2k} \left(\frac{ \left(1+2\ell\right)\varphi^{2}}{3!}\right) p_{1}^{\ell-1} (1-p_{1})   \label{eq:5}\\
 &\qquad + \frac{N}{N+1} \left(\frac{1}{N+1}\right)^{2k} p_{1}^{N-1}+ \frac{N}{N+1} \left(\frac{1}{N+1}\right)^{2k} \left(\frac{ \left(1+2N\right)\varphi^{2}}{3!}\right) p_{1}^{N-1}. \label{eq:6}
\end{align}
The second part of the left hand side, (\ref{eq:2}), is equal to the first part, (\ref{eq:4}), on the right hand side. Next we compare the third part of the left,  (\ref{eq:3}), to the third part on the right, (\ref{eq:6}). We notice that the function $\left(\frac{\ell}{\ell+1}\right) \left(\frac{1}{\ell +1}\right)^{2k}$ is monotonically decreasing in $\ell$ and thus
\begin{align}
& \sum_{\ell=N}^{\infty} \left(\frac{\ell}{\ell+1}\right) \left(\frac{1}{\ell +1}\right)^{2k}  p_1^{\ell-1} (1-p_1)\leq \left(\frac{N}{(N+1)^{2k+1}}\right) \sum_{\ell=N}^{\infty}  p_1^{\ell-1} (1-p_1)   \\
&= \left(\frac{N}{(N+1)^{2k+1}}\right) p_1^{N-1} \, .  
\end{align}
Now, we consider the remaining term on the left hand side, (\ref{eq:1}), and the second of the right hand side, (\ref{eq:5}). We start with giving an upper bound for the second sum in (\ref{eq:1}):
\begin{align}
\sum_{\ell=N}^{\infty} \frac{p_1^{\ell+1}}{(\ell+1)^{2k-1}} &\leq\left(\frac{1}{N+1}\right) ^{2k-1}\sum_{\ell=N-1}^{\infty} \frac{p_1^{\ell}}{(1-p_1)}  = \left(\frac{1}{N+1}\right) ^{2k-1}\frac{p_1^{N-1}}{(1-p_1)}.  
\end{align}
The second term of the right hand side, (\ref{eq:5}),  can be transformed into
\begin{align}
&\sum_{\ell=1}^{N-1} \frac{\ell}{\ell+1} \left(\frac{1}{\ell+1}\right)^{2k} \left( \frac{ \left(1+2\ell\right)\varphi^{2}}{3!}\right) p_{1}^{\ell-1} (1-p_{1})   \\
&\quad \geq \frac{1-p_{1}}{p_{1}^2} \left(\frac{\varphi^2}{3! (N+1)^2}\right) \sum_{l=1}^{N-1} \left(\frac{2\ell^2}{(\ell+1)^2}\right) \left(\frac{p_1^{\ell+1}}{(\ell+1)^{2k-1}}\right)   \\ 
&\quad \geq \frac{1-p_{1}}{p_{1}^2} \left(\frac{ (\arccos(1/ \bar\mu))^2}{3! (N+1)^2}\right) \frac{1}{2} \sum_{\ell=1}^{N-1} \left(\frac{p_1^{\ell+1}}{(\ell+1)^{2k-1}}\right).  
\end{align}
We have that $(\ref{eq:1})<(\ref{eq:5})$ if 
\begin{align}
&\frac{ \bar{\eta} }{2} \left( \frac{1-p_1}{p_1^2}\right) \left(  \sum_{\ell=1}^{N-1} \frac{p_1^{\ell+1}}{(\ell+1)^{2k-1}}+\left(\frac{1}{N+1}\right)^{2k-1} \frac{p_1^{N-1}}{(1-p_1)}\right)   \\
&\quad \leq \left(\frac{1-p_{1}}{p_{1}^2} \right)\left(\frac{(\arccos(1/ \bar\mu))^2}{3! (N+1)^2}\right) \frac{1}{2} \sum_{\ell=1}^{N-1} \left(\frac{p_1^{\ell+1}}{(\ell+1)^{2k-1}}\right).  
\end{align} For all choices of $p_{1}$ and $N\in \N$ we can now find $\bar{\eta}$ sufficiently small such that the latter inequality is verified for all $k \in \N$.

\subsubsection*{Summary} 
We summarize all the conditions on $\eta$ and $\bar{\mu}$ such that we can find a dominating transient $\BMC$ for a given frog model $\FM$  in the case when stretches come up:

\begin{enumerate} 
\item $1+\bar{\eta} < \bar{\mu}$;
\item $\frac{d_{min}+1}{d_{min}}\leq \bar{\mu} $;
\item Choosing $\eta$ such that $\bar{\eta}$ is small enough such that there exists some $N\in\N$ such that  (\ref{eq:type2N2}) and (\ref{eq:comparisontype3}) hold;
\item Choosing $\eta$ such that for given $p_1$ and the previously selected $N$ the inequality  (\ref{eq:1})- (\ref{eq:6}) holds;
\item $\bar{\mu} <   \left(\cos \frac{\arccos \left(\frac{{2}\sqrt{d_{min}}}{d_{min}+1}\right)}{N+1}\right)^{-1}$.
\end{enumerate}
In other words, for every  $p_{1}>0$ there exists some $N$ big enough such that if 
\begin{equation} \label{eq:final_stretches}
\frac{d_{min}+1}{d_{min}} < \left(\cos \frac{\arccos \left(\frac{{2}\sqrt{d_{min}}}{d_{min}+1}\right)}{N+1}\right)^{-1}
\end{equation}
there exists some small $\bar{\eta}>0$ and some $\BMC'_N$ with mean offspring larger than $1$ such that $\E[\nu_{\tiny{\BMC'_N}}]<\infty$ and  
\begin{equation}
\nu_{\tiny{\ST}}'(\pa')\leq \E[\nu_{\tiny{\BMC'_N}}(\pa')]
\end{equation} for all paths $\pa'$ and $\GW$-a.a.~trees $\Tr'$.  Finally, we found that $\nu'<\infty$ $\fmm$-a.s.~for $\GW$-a.a.~trees and hence $\nu<\infty$ $\fmm$-a.s.~for $\GW$-a.a.~trees. The existence of the constant $c_{\eta}$ follows from the $0$--$1$-law of transience. 

The existence of a transient phase is guaranteed since for all $N$ there exists $d_{min}$ fulfilling (\ref{eq:final_stretches}) as (\ref{eq:final_stretches}) is equal to 
\[N+1 < \frac{\arccos \left(\frac{{2}\sqrt{d_{min}}}{d_{min}+1}\right)}{\arccos\left(\frac{d_{min}}{d_{min}+1}\right)}\, \]
and the right hand side converges to $\infty$ for $d_{min} \rightarrow \infty$. 
\end{proof}


\subsection{Bushes and {possible} stretches} \label{subsec:bushesandstretches}
It is left to prove the main theorem of this paper where we allow $p_0>0$. The proof starts with the following modification: Once a frog visits a vertex $v \in \Tr$ with bushes attached, all frogs in the bushes are woken up and placed at $v$. This is equivalent to changing the number of frogs in $v$ and conditioning the frogs not to enter the bush. The erasure of the bushes does not change the transience behaviour of the process. Following this procedure, we end up with trees with stretches and without bushes and we can then apply the proof of  Proposition \ref{propositionstretches}.


\begin{proof}[Theorem \ref{main}]
We assume that  $p_0>0$ and start with explaining how we  remove the bushes.

\subsubsection*{Removing bushes from $\Tr$}

Every infinite  GW-tree can be seen  as a multitype GW-tree $\bar{\T}$ with types $\tg$ and $\tb$, see Section \ref{sectionGW}. We denote by $\Tr$ a realization of $\GW$ conditioned to be infinite. Moreover we recall that our GW-tree has bounded offspring: there is a $K=d_{max}<\infty$ such that $Y_i^{(n)}\leq d_{max}$ for all $i,n \in \N$. Therefore, every vertex which is part of a geodesic stretch can have at most $K-1$  finite bushes attached. 

To start with, we modify the original frog model $\FM$. If a frog visits a vertex $v \in \Tr$ with attached bushes for the first time, then immediately all frogs from the bushes attached to $v$ wake up and are placed at $v$. As $K+1$ is the maximum degree of the tree, we know that there are at most $K-1$ bushes attached to a vertex of type $\tg$. More formally, let $v_i, i\in \{1,\ldots,k\}$ and $k\leq K-1$, the vertices of type $\tb$ adjacent to $v$ and let $G_{v_i}$ denote the random bush starting with root $v_i \in \Tr$. Then, there will be $\eta^{*}(v):=\sum_{i=1}^{k}\sum_{w\in G_{v_i}} \eta(w)$  frogs in vertex $v$ with attached bushes and $\eta^{*}(u):=\eta_u$ frogs in a vertex of type $\tg$ with no attached bushes.  The bushes  $G_{v_i}$ are i.i.d.\ distributed like a subcritical GW-process with generating function $\tilde{f}$, see Section \ref{sectionGW}, and the expected size of $G_{v_i}$ is finite.  Conditioning the frog model on not entering bushes we obtain different transition probabilities for each frog. Let $v$ be a vertex with neighboured bushes, $v_1,\ldots,v_k$, $k\leq K-1$ the attached roots of bushes and $w_1,\ldots,w_d$, $d\leq K+1-k$ its neighbours of type $\tg$.
 Then we obtain 
\begin{align}
\Pa[v \rightarrow w_i\lvert \, \text{not entering a bush}]=\frac{1}{d}
\end{align} 
as new transition probabilities. This coincides with the probability of the first exit towards a neighbour $w_i$ of type $\tg$ starting in $v$. 
The new model actually lives on a new state space $\widehat{\Tr}$ that arises from $\Tr$ by erasing all {bushes},  see also Figure \ref{figurebush}. 
\begin{figure}[h]
 \centering
 \begin{minipage}[t]{0.48\textwidth}
\includegraphics[width=1\textwidth]{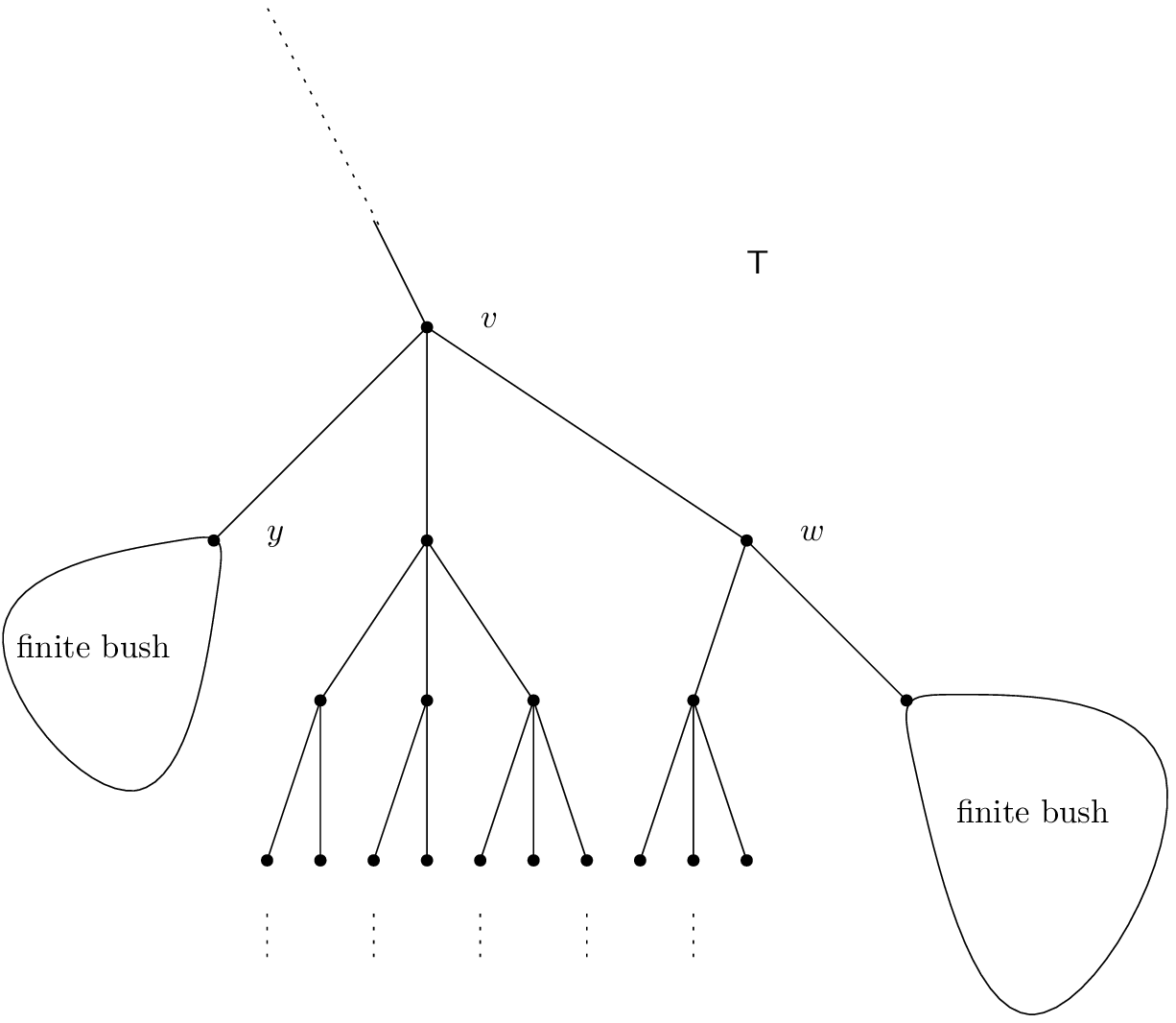}
\end{minipage}
\hspace{1mm}
\begin{minipage}[t]{0.48\textwidth}
\includegraphics[width=1\textwidth]{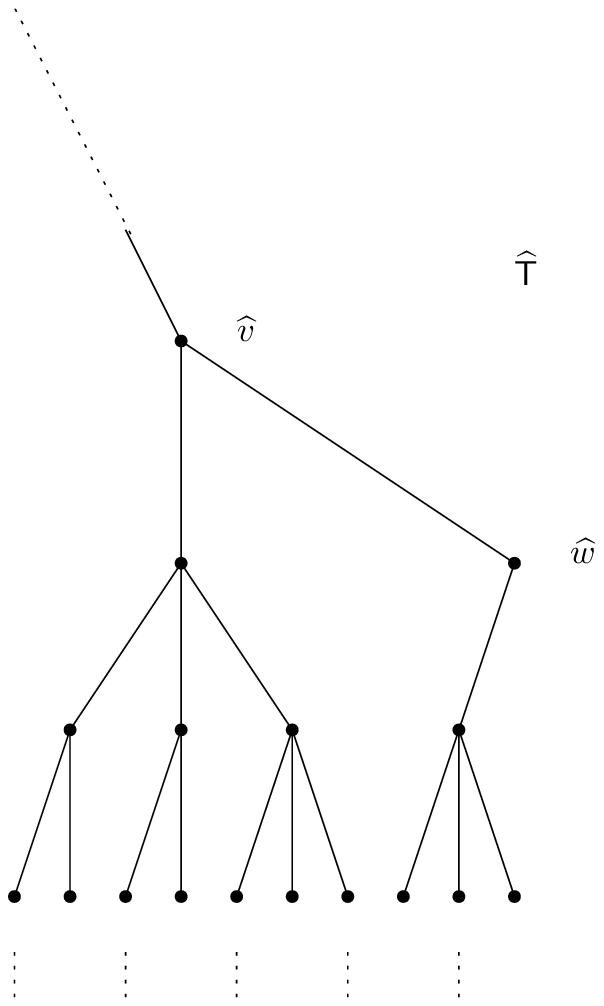}
\end{minipage}
\caption{Construction of $\widehat{\Tr}$ from $\Tr$ by deleting all bushes.}
 \label{figurebush}
\end{figure}
Then, we identify the frog configuration by $\eta^{*}(\widehat{v})=\eta^{*}(v)$ of the two models on $\Tr $ and $\widehat{\Tr}$ and obtain the new frog model $\FM( (\eta^{*}(\widehat{v}))_{\widehat{v}\in \widehat{\Tr}},\widehat{\Tr})$. We keep here the whole sequence of random variables in the frog configuration to point out that the random variables are not identically distributed. 

We denote by  $\nu$  the number of visits to the root in $\FM$  and by $\widehat{\nu}$ the number of visits to the root in $\FM( (\eta^{*}(\widehat{v}))_{\widehat{v}\in \widehat{\Tr}},\widehat{\Tr})$. Coupling the frog configuration at each vertex as in Lemma \ref{couplingstandard} we find for each frog in $\FM$ a corresponding frog in $\FM( (\eta^{*}(\widehat{v}))_{\widehat{v}\in \widehat{\Tr}},\widehat{\Tr})$. Using a coupling as in the proof of Lemma \ref{couplingstandard}  we obtain that  
\begin{equation}
\widehat{\nu}\succeq \nu. \label{BMCdominating}
\end{equation}
Thus transience of $\FM( (\eta^{*}(\widehat{v}))_{\widehat{v}\in \widehat{\Tr}},\widehat{\Tr})$ implies transience of $\FM$. 


\subsubsection*{Construction of a dominating BMC}
\begin{figure}[h]
 \centering
 \begin{minipage}[t]{0.29\textwidth}
\includegraphics[width=1\textwidth]{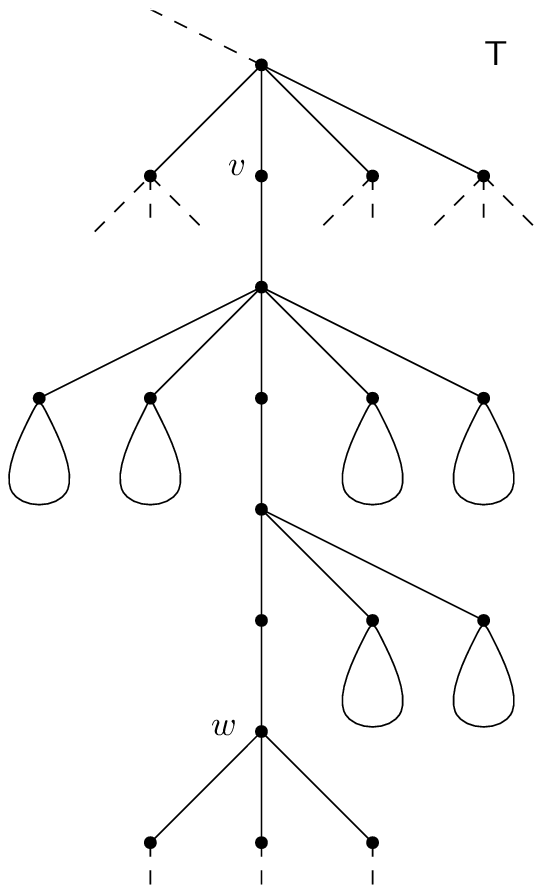}
\end{minipage}
\hspace{18mm}
\begin{minipage}[t]{0.29\textwidth}
\includegraphics[width=1\textwidth]{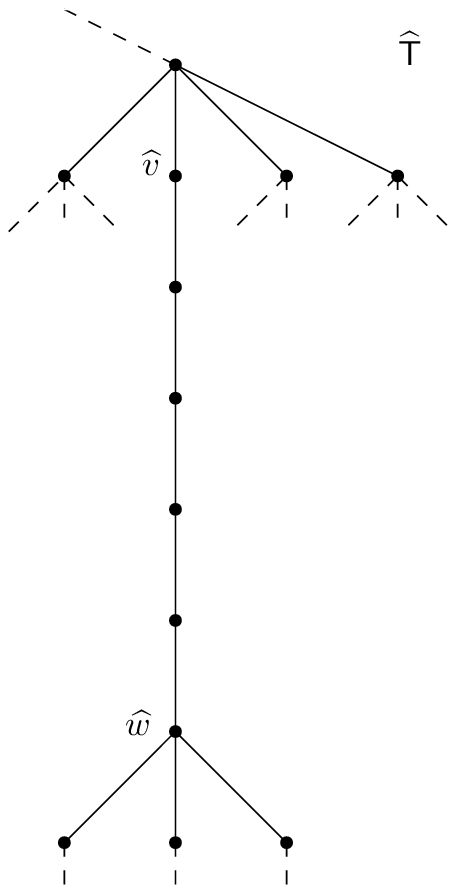}
\end{minipage}
\caption{Creating a stretch during the modification from $\Tr$ to $\widehat{\Tr}$.}
 \label{figure:stretch+bush_specialcase}
\end{figure}
Removing all bushes, we have to be aware that a sequence of vertices with only one child of type $\tg$ will create new stretches, see Figure \ref{figure:stretch+bush_specialcase}. Hence, we need to go on by using Proposition \ref{propositionstretches}. But the newly appeared stretches can be unbalanced  in the sense that some vertices were former neighbours to bushes and have the corresponding offspring and some not. This would inhibit the number of frogs emerging to the ends of the stretch to be equally distributed. Therefore, we modify the frog model in the following way: a vertex can have an offspring of at most $K$.  Therefore, every vertex which is part of a stretch could have at most $K-1$  finite bushes attached. We set $\widehat{\eta}({\widehat{v}}):=\sum_{i=1}^{K-1} \sum_{\widehat{w} \in G_{\widehat{v}_i}} \eta({\widehat{w}}) $ with $G_{\widehat{v}_i}$ being finite bushes generated according to $\T^{sub}$ for each vertex $\widehat{v}\in \widehat{\Tr}$ and notice that $(\widehat{\eta}({\widehat{v}}))_{\widehat{v}\in \widehat{\Tr}}$  is a sequence of i.i.d.\ random variables and we call their common measure $\widehat{\eta}$. Then, the model $\FM( (\eta^{*}(\widehat{v}))_{\widehat{v}\in \widehat{\Tr}},\widehat{\Tr})$ is dominated by $\widehat{\FM}:=\FM(\widehat{\Tr},\widehat{\eta})$, as there are only more particles in the new model and we can couple the two processes such that every visit in $\FM( (\eta^{*}(\widehat{v}))_{\widehat{v}\in \widehat{\Tr}},\widehat{\Tr})$  has a corresponding visit in $\widehat{\FM}$. 
\begin{figure}[h]
 \centering
\begin{minipage}[t]{0.3\textwidth}
\vspace{0mm}
\includegraphics[width=1\textwidth]{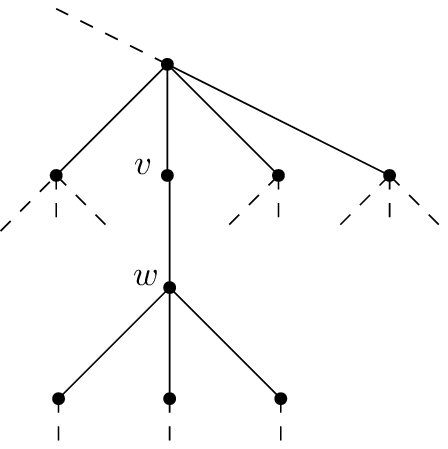}
\end{minipage}
\caption{The modification of the stretch in Figure \ref{figure:stretch+bush_specialcase} in the step from $\widehat{\Tr}$ to $\Tr'$.}
\label{figure:stretch+bush_specialcase_filledup}
\end{figure}

In the same manner as in Proposition \ref{propositionstretches} we want to couple $\widehat{\FM}$ with a modified model $\FM'$ doing the same steps as in Proposition \ref{propositionstretches}: if a frog enters a  stretch all frogs from the stretch are woken up and placed according to their exit measures at the two ends of the stretch. This results in the modified state space $\Tr'$ by merging the stretches into one vertex like in Proposition \ref{propositionstretches}, see Figure \ref{figure:stretch+bush_specialcase_filledup}. As there are $\sum_{i=1}^{K-1} \sum_{\widehat{w} \in G_{\widehat{v}_i}} \eta({\widehat{w}}) $ frogs placed on each vertex, from a stretch of length $\ell_{\widehat{v}}$ leave on average
\begin{align*}
\E^{\Tr'}_{v'}\left[F_{u'}\right] &= \frac{\ell_{\widehat{v}} \E[G](K-1)  \bar{\eta}}{2} + \frac{\ell_{\widehat{v}}}{\ell_{\widehat{v}} +1}\, , \, \\ 
\E^{\Tr'}_{v'}\left[{F}_{w'}\right] &= \frac{\ell_{\widehat{v}}(K-1) \E[G] \bar{\eta}}{2} + \frac{1}{\ell_{\widehat{v}}+1}\,
\end{align*}
 frogs to the two ends of the stretch. Here, the length $\ell$ of the stretch is distributed according to $geo(\widehat{p}_1)+1$, where  $\widehat{p}_1$ is the probability of having only one child of type $\tg$. 
 For the construction of a dominating $\BMC$ let again $N\in \N$ and  define the tree $\widehat{\Tr}_{N}$ as a copy of $\widehat{\Tr}$, where each stretch of length larger than $N$ is replaced by a  stretch of length $N$. On this tree we define again $\widehat{\BMC}_{N}$, on $\widehat{\Tr}_{N}$, with driving measure $\SRW$ and the offspring distribution $\mu$ is equal to the distribution which fulfills 
 \[ \mu_k(\widehat{v})=\Prob \left[ \sum_{i=1}^{K-1} \sum_{\widehat{w} \in G_{\widehat{y}_i}} \eta({\widehat{w}}) +1=k\right]  \] 
 for any $\widehat{v}\in \widehat{\Tr_N}$. Its mean offspring is denoted by $\bar{\mu}$.  We recall that $\Tr'$ is the tree, where the stretches of maximum length $N$ are compressed to a single vertex (similar to Proposition \ref{propositionstretches}). Then $\widehat{\BMC}_{N}$ defines naturally a $\BMC'_N=\widehat{\BMC}'_{N}$ on ${\Tr}'$: Once a particle enters a former stretch,  it produces offspring particles according to the exit-measures.

To find an $N \in \N$ such that ${\BMC}'$ is dominating for ${\FM}'$ we proceed like in the proof of Proposition \ref{propositionstretches} with the difference that in average to both sides of a geodesic stretch of length $\ell$ exit 
\[ \frac{\ell \E[G](K-1)  \bar{\eta}}{2}\]
frogs instead of $\frac{\bar{\eta}\ell}{2}$ frogs. The frog which is waking up the stretch leaves the stretch to each side with the same probability as before. Moreover the length of the stretch is now distributed according to $geo(\widehat{p}_1)+1$ and the probability that a vertex is dedicated as a starting vertex of a stretch is $ber(\widehat{p}_1)$-distributed, as well. 

The ${\BMC}'$ has to fulfill the transience criterion Theorem \ref{transiencecriterium}, as well. We notice, that $\widehat{\Tr}_N$ corresponds to the tree $\Tr_N$ from the construction of the dominating Branching Markov chain in the proof of Proposition \ref{propositionstretches} and 
\[\rho(\widehat{\Tr}_N)= \left(\cos \frac{\arccos \left(\frac{{2}\sqrt{d_{min}}}{d_{min}+1}\right)}{N+1}\right)\]
with $d_{min}=\min\{k\geq 2: p_k>0\}$. All together, using the same line of arguments as in Proposition \ref{propositionstretches}, we have the following conditions on $\eta$ and $\bar{\mu}$ such that there exists a dominating $\BMC'_N$:
\begin{enumerate} 
\item $1+\E[G]\bar{\eta} (K-1) < \bar{\mu}$;
\item $\frac{d_{min}+1}{d_{min}}\leq \bar{\mu} $ where $d_{min}=\min\{k \geq 2: \widehat{p}_k>0 \}$;
\item Choosing $\eta$ such that $\bar{\eta}$ is small enough such that there exists an $N$ such that 
\begin{align}
&\left( \frac{\bar{\eta}\, \E[G] (K-1)}{2}\left(\frac{1}{1-\widehat{p}_1}+1\right)+ \sum_{\ell=1}^{\infty} \left( \frac{\ell}{\ell+1} \right) \widehat{p}_1^{\ell-1} (1-\widehat{p}_1) \right)   \\
 & \quad < \sum_{\ell=1}^{N-1} g(\ell+1) \widehat{p}_1^{\ell-1} (1-\widehat{p}_1) + g(N) \widehat{p}_1^{N},
\end{align}
and (\ref{eq:comparisontype3}) hold;
\item Choosing $\eta$ such that for given $\widehat{p}_1$ and the previously selected ${N}$ equation
\begin{align}
&  \bar{\eta} \, \E[G] (K-1) \left(\frac{3! (N+1)^2}{(\arccos(1/ \bar\mu)^2}\right) \left(  \sum_{\ell=1}^{N-1} \frac{\widehat{p}_1^{\ell+1}}{(l+1)^{2k-1}}+\left(\frac{1}{N+1}\right) \frac{\widehat{p}_1^{N-1}}{(1-\widehat{p}_1)}\right)   \\
&\quad\leq  \sum_{l=1}^{N-1} \left(\frac{\widehat{p}_1^{l+1}}{(l+1)^{2k-1}}\right)   
\end{align} holds;
\item $\bar{\mu} <   \left(\cos \frac{\arccos \left(\frac{{2}\sqrt{d_{min}}}{d_{min}+1}\right)}{N+1}\right)^{-1}$.
\end{enumerate}

We can conclude similar to Proposition \ref{propositionstretches}. 

\end{proof}

\section[Some properties]{Some properties of Galton--Watson trees and branching random walks} \label{appendix_frogs}
\subsection{The relation with generating functions}\label{sec:A1}
 At various places we have used generating functions. They are a crucial tool in the study of BMC, e.g.,~see \cite{BePe:94}, \cite{Candellero},  \cite{Hueter2000}, \cite{MV}, and  \cite{woess}.
Let $M$ be a subset of the state space and modify the BMC in a way such that particles are absorbed in $M$ and once they have arrived in $M$, they keep on producing one offspring a.s. In other words, particles arriving in $M$ are \emph{frozen}. Set $Z_{\infty}(M)\in \N\cup\{\infty\}$ as the total number of frozen particles in $M$ at time ``$\infty$''.  For $M\subseteq \Gamma$, we define the first visiting generating function:
$$
F(x,M|z)  :=  \sum_{n\geq 0} \Prob\bigl[Z_n\in M,\forall m\leq n-1:
Z_m\notin M \mid X_0=x\bigr]z^n,
$$
 where $Z_{n}$ is the original SRW and $\Prob$ its corresponding probability measure.
 The following lemma will be used several times in our proofs; a short proof can be found for example in \cite[Lemma 4.2]{Candellero}.
 
 \begin{lemma}\label{lemma:Z-infty-formel}
{Let $\bar \mu$ be the mean offspring of the BMC.} For any $M\subseteq \Gamma$, we have \[\mathbb{E}\bigl[Z_{\infty}(M)\bigr]=F(e,M|\bar \mu)\, .\] 
\end{lemma}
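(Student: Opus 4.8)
The idea is to compute the expectation by a first–moment (many-to-one) argument, decomposing the frozen particles according to the time and the trajectory of their first entrance into $M$. First I would recall that the $\BMC$ admits the genealogical description of a labelled Galton--Watson tree: the genealogy is a Galton--Watson tree with mean offspring $\bar\mu$, and each genealogical node carries a position in $\Gamma$, the position of the root node being $e$ and the positions along any ancestral line performing a trajectory of the driving $\SRW$. A particle is \emph{frozen} precisely when it is the first node on its ancestral line whose position lies in $M$; since, by the freezing convention, a frozen particle thereafter reproduces as a single immortal line, each first entrance into $M$ contributes exactly $1$ to the monotone limit $Z_\infty(M)=\lim_n(\text{number of frozen particles present by time }n)$. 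Thus $Z_\infty(M)$ equals the number of genealogical nodes realising a first entrance into $M$.

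The main step is the many-to-one formula. I fix a finite path $(x_0,\ldots,x_n)$ in $\Gamma$ with $x_0=e$. Because branching and motion are independent and each particle at each step produces on average $\bar\mu$ offspring which then move one step according to $P$, an immediate induction on $n$ gives that the expected number of time-$n$ particles whose ancestral positions are exactly $(x_0,\ldots,x_n)$ equals
\[
\bar\mu^{\,n}\prod_{i=0}^{n-1} p(x_i,x_{i+1}) \;=\; \bar\mu^{\,n}\,\Prob\bigl[Z_0=x_0,\ldots,Z_n=x_n \mid X_0=e\bigr].
\]
Restricting to trajectories that first meet $M$ at time $n$, i.e. $x_0,\ldots,x_{n-1}\notin M$ and $x_n\in M$, and summing over all such paths, the expected number of particles performing a first entrance into $M$ at time $n$ is $\bar\mu^{\,n}\,\Prob[Z_n\in M,\ Z_m\notin M\ \forall m\le n-1\mid X_0=e]$, which is exactly $\bar\mu^{\,n}$ times the $n$-th coefficient of $F(e,M\mid z)$. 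Summing over $n\ge 0$ and using that all summands are nonnegative (so that Tonelli together with monotone convergence justifies interchanging expectation and the double sum) yields
\[
\E\bigl[Z_\infty(M)\bigr]=\sum_{n\ge 0}\bar\mu^{\,n}\,\Prob\bigl[Z_n\in M,\ Z_m\notin M\ \forall m\le n-1\mid X_0=e\bigr]=F(e,M\mid\bar\mu).
\]

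I expect the only genuine subtlety to be bookkeeping rather than analysis: one must check that the freezing convention counts every lineage exactly once (this is precisely why frozen particles are made to reproduce as a single particle, preventing both extinction of the frozen mass and any overcounting), and that the many-to-one identity is applied to the disjoint events indexed by first-passage trajectories, so that no particle is attributed to two different times $n$. The interchange of the sum over $n$ with the expectation is free of charge since $Z_\infty(M)$ is a monotone limit of nonnegative integer-valued random variables. As noted in the statement, a short self-contained argument along exactly these lines is given in \cite[Lemma 4.2]{Candellero}, so I would either reproduce the computation above or simply cite it.
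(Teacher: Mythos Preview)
Your argument is correct: the many-to-one decomposition along first-passage trajectories is exactly the standard proof, and your handling of the freezing convention and the interchange of sum and expectation is clean. The paper itself does not give a proof of this lemma at all; it simply refers the reader to \cite[Lemma~4.2]{Candellero}, which contains precisely the computation you outlined. So your proposal not only matches the paper's approach (deferring to that reference) but in fact supplies the short argument the paper omits.
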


\subsection{Spectral radius of trees} \label{subsec:spectralradiusoftrees}
In order to study recurrence and transience of a BMC it is essential to understand the spectral radius of the underlying Markov chain. In this section, we collect several results on the spectral radius of SRW on trees.

\begin{definition}\label{def:isoperimetricconstant}
The {isoperimetric constant} $\iota(T)$ of a tree with edges $E$ and vertices $V$ is defined by
\[\iota(T):=\inf\left\{\frac{\mid \delta_E F\mid}{Vol(F)}:F\subset X \, \mbox{finite} \right\}\]
where $\delta_E F=E(F,X\setminus F)$ is the set of edges connecting $F$ with $T\setminus F$ and $Vol(F)=\sum_{x\in F} deg(x)$.
\end{definition}
For the isoperimetric constant it holds that, $\iota(T) =0 $ if and only if the spectral radius $\rho(T)$ of the simple random walk  equal to $1$, see  Theorem 10.3 in \cite{woessRW}. 

There is a more precise statement on finite approximation of the spectral radius, e.g.,~see \cite{BePe:94a} and \cite{mullerrecurrenceBMC}.  Consider  an infinite irreducible Markov chain $(X,P)$ and write $\rho(P)$ for its spectral radius.  A
subset $Y\subset X$ is called irreducible if the sub-stochastic
operator $$P_Y=(p_Y(x,y))_{x,y\in Y}$$ defined by
$p_Y(x,y):=p(x,y)$ for all $x,y\in Y$ is irreducible. It is rather straightforward to show the next characterization.

\begin{lemma}\label{lem:specapprox}
Let $(X, P)$ be an irreducible Markov chain. Then, \begin{equation}\label{eq:specapprox}
\rho(P)=\sup_Y \rho(P_Y),
\end{equation}
where the supremum is over finite and irreducible subsets
$Y\subset X.$ Furthermore, $\rho(P_F)< \rho(P_G)$ if $F\subsetneq
G.$
\end{lemma}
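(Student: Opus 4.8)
The plan is to establish the identity (\ref{eq:specapprox}) by proving the two inequalities separately, and then to deduce the strict monotonicity from the Perron--Frobenius theory of finite nonnegative matrices. For the easy inequality $\sup_Y \rho(P_Y)\le \rho(P)$, I would fix a finite irreducible $Y\subset X$ and observe that, since $P_Y$ records only transitions that remain inside $Y$, every $Y$-internal path of length $n$ from $x$ to $y$ is also a path in $X$; hence $p_Y^{(n)}(x,y)\le p^{(n)}(x,y)$ for all $x,y\in Y$ and all $n$. Because $P_Y$ is a finite irreducible nonnegative matrix, Perron--Frobenius gives $\rho(P_Y)=\limsup_n (p_Y^{(n)}(x,y))^{1/n}$, so $\rho(P_Y)\le \limsup_n (p^{(n)}(x,y))^{1/n}=\rho(P)$, and taking the supremum over $Y$ yields the claim.

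The reverse inequality is the core of the argument. I would fix a reference state $x$, let $d$ be the period of the chain, and use the supermultiplicativity $p^{(m+n)}(x,x)\ge p^{(m)}(x,x)\,p^{(n)}(x,x)$, obtained by concatenating an $x$-to-$x$ loop of length $m$ with one of length $n$. Applying Fekete's lemma to $b_k:=p^{(kd)}(x,x)$ shows that $\rho(P)=\sup_k (p^{(kd)}(x,x))^{1/(kd)}$, so the spectral radius is approached at \emph{individual} finite times. Given $\varepsilon>0$, choose $k$ with $(p^{(kd)}(x,x))^{1/(kd)}>\rho(P)-\varepsilon$. Since $p^{(kd)}(x,x)$ is the total weight of all loops of length $kd$ based at $x$, I can select finitely many such loops carrying weight at least $p^{(kd)}(x,x)-\delta$ (in the locally finite case there are only finitely many loops, so $\delta=0$); let $Y$ be a finite irreducible set containing $x$ together with all vertices of these loops, enlarged if necessary to restore irreducibility, which is possible because $X$ is irreducible. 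Then $p_Y^{(kd)}(x,x)\ge p^{(kd)}(x,x)-\delta$, and the same supermultiplicativity applied to $P_Y$ gives $\rho(P_Y)\ge (p_Y^{(kd)}(x,x))^{1/(kd)}$. Choosing $\delta$ small makes the right-hand side exceed $\rho(P)-\varepsilon$, whence $\sup_Y \rho(P_Y)\ge \rho(P)-\varepsilon$; letting $\varepsilon\to 0$ completes the equality.

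For the strict monotonicity, I would view both $P_F$ and $P_G$ as nonnegative matrices indexed by $G$, writing $\tilde P_F$ for $P_F$ extended by zeros outside $F\times F$. Since $p_F(u,v)=p(u,v)=p_G(u,v)$ for $u,v\in F$ and all remaining entries of $\tilde P_F$ vanish, one has $\tilde P_F\le P_G$ entrywise; moreover, because $G$ is irreducible and $F\subsetneq G$, there is a vertex $w\in G\setminus F$ reached by a positive transition inside $G$, so $P_G$ is strictly positive at an entry where $\tilde P_F$ is zero, giving $\tilde P_F\ne P_G$. As $\rho(\tilde P_F)=\rho(P_F)$ (padding by zeros does not affect the spectral radius of the nontrivial block) and $P_G$ is irreducible, the strict monotonicity of the Perron--Frobenius eigenvalue under a nontrivial entrywise decrease yields $\rho(P_F)=\rho(\tilde P_F)<\rho(P_G)$.

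The step I expect to be the main obstacle is the reverse inequality, where one must convert the asymptotic, $\limsup$-based definition of $\rho(P)$ into usable information at a single finite time; this is exactly what the Fekete/supermultiplicativity argument supplies, combined with the (automatic, in the locally finite setting) fact that one return probability is captured up to arbitrarily small error by finitely many loops living in a finite set. A secondary point needing care is the periodicity bookkeeping in Fekete's lemma, which is handled cleanly by passing to the sublattice $d\,\N$ of return times.
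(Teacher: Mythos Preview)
The paper does not actually supply a proof of this lemma; it presents the result as ``rather straightforward,'' points to \cite{BePe:94a} and \cite{mullerrecurrenceBMC} for the general fact, and explicitly refers to the Perron--Frobenius theorem in \cite{Seneta} for the strict inequality $\rho(P_F)<\rho(P_G)$. Your argument is a correct and standard way to fill in these details, and your treatment of the strict monotonicity via entrywise domination and Perron--Frobenius for irreducible nonnegative matrices is precisely what the paper's reference to Seneta is indicating.

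Two minor remarks on your write-up. First, in the reverse inequality you do not need to invoke local finiteness: for a fixed length $n$, the (possibly infinite) sum of loop weights defining $p^{(n)}(x,x)$ is convergent, so a finite partial sum captures it up to any $\delta>0$, exactly as you say in the general case. Second, the step ``enlarge $Y$ to restore irreducibility'' is harmless without appealing to the not-yet-proved monotonicity: enlarging $Y$ can only increase $p_Y^{(kd)}(x,x)$ (more admissible paths), and irreducibility of $X$ guarantees one can connect the chosen loops through finitely many additional vertices while keeping $Y$ finite. With these points noted, the proposal is complete and in line with what the paper has in mind.
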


We compare this also to the Perron-Frobenius theorem, see for example \cite{Seneta}, especially for the last inequality. A first observation is the following result, see \cite[Lemma 9.86]{woess}. We say that  a stretch (or unbranched path) of length $N$ in a tree $T$ is a path $[v_{0}, v_{1}, \ldots, v_{N}]$ of distinct vertices such that $deg(v_{k})=2$ for $k=1,\ldots, N-1$.

\begin{lemma}\label{lem:unbranched}
Let $T$ be a locally finite tree $T$. If $T$ contains stretches of arbitrary length, then $\rho(T)=1$.
\end{lemma}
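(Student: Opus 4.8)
The plan is to use the equivalence, stated just above the lemma (Theorem 10.3 in \cite{woessRW}), that $\rho(T)=1$ if and only if the isoperimetric constant $\iota(T)$ vanishes. It therefore suffices to exhibit a sequence of finite subsets $F\subset T$ along which the ratio $\mid\!\delta_E F\!\mid / Vol(F)$ tends to $0$.

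First I would fix, for each $N\in\N$, a stretch $[v_0,v_1,\ldots,v_N]$ of length $N$, which exists by hypothesis. By the definition of a stretch the interior vertices $v_1,\ldots,v_{N-1}$ all have degree $2$ in $T$. I would then take as test set the interior of the stretch, $F_N:=\{v_1,\ldots,v_{N-1}\}$, and compute the two relevant quantities directly. Since each of the $N-1$ vertices of $F_N$ has degree $2$, we get $Vol(F_N)=\sum_{x\in F_N} deg(x)=2(N-1)$. The edge boundary $\delta_E F_N$ consists only of the two edges $(v_1,v_0)$ and $(v_{N-1},v_N)$ leaving the path at its two ends, so $\mid\!\delta_E F_N\!\mid = 2$. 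Hence $\mid\!\delta_E F_N\!\mid / Vol(F_N) = 1/(N-1)$, which converges to $0$ as $N\to\infty$. Because $T$ contains stretches of arbitrary length, this forces $\iota(T)=\inf_F \mid\!\delta_E F\!\mid / Vol(F)=0$, and the cited theorem yields $\rho(T)=1$.

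I expect no serious obstacle here; the only point requiring a little care is to restrict attention to the interior vertices $v_1,\ldots,v_{N-1}$, so that the degree-$2$ property applies uniformly and the boundary is exactly two edges. Including the endpoints $v_0,v_N$ would introduce their possibly larger degrees and additional boundary edges without changing the asymptotics, so it is cleaner to omit them.

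As an alternative route that avoids the isoperimetric characterization, I would instead invoke Lemma \ref{lem:specapprox}: with $Y=F_N$ as above, the substochastic operator $P_{F_N}=(p(x,y))_{x,y\in F_N}$ is precisely the $(N-1)\times(N-1)$ tridiagonal matrix with zero diagonal and entries $1/2$ on both off-diagonals, whose largest eigenvalue is $\cos(\pi/N)$. Since $F_N$ is a finite irreducible subset, Lemma \ref{lem:specapprox} gives $\rho(T)\ge\rho(P_{F_N})=\cos(\pi/N)$ for every $N$, and letting $N\to\infty$ again yields $\rho(T)=1$. Either argument is short; the isoperimetric one is the more economical, as it reuses the characterization recorded immediately before the statement.
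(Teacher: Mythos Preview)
Your proof is correct. The paper does not actually prove this lemma; it simply quotes it as \cite[Lemma~9.86]{woess}. Both of your routes are valid and self-contained, and in fact your isoperimetric argument is exactly the style of reasoning the paper itself uses a few lines later in the proof of Lemma~\ref{lemmaspectralradii} for the case of bushes (showing $\lvert \delta_E B_N\rvert / Vol(B_N)\leq 1/(2N)$ to force $\iota(\Tr)=0$). Your second approach via Lemma~\ref{lem:specapprox} and the explicit eigenvalue $\cos(\pi/N)$ of the absorbing path is also in the spirit of the paper, which invokes precisely these ingredients in Theorem~\ref{thm:rhoN} and Subsection~\ref{sec:absBMC}. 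Either argument would be an acceptable replacement for the bare citation.
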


Moreover,  we can give a precise characterization of the spectral radius of a simple random walk on a GW-tree
\begin{lemma} \label{lemmaspectralradii}
Let $\rho(\Tr)$ be  the spectral radius of the simple random walk on  a Galton--Watson tree $\Tr$ with offspring distribution $(p_{i})_{i\geq 0}$. Then,
\begin{itemize}
\item if $p_0+p_1>0$ we have  $\rho(\Tr)=1$  for $\GW$-a.a.~realizations $\Tr$;\item if $p_0+p_1=0$ we have  $\rho(\Tr) = \rho(T_{d+1})=\frac{2\sqrt{d}}{d+1}<1$  for $\GW$-a.a.~infinite realizations  $\Tr$,
\end{itemize}
where $d=\min\{i:p_i> 0\}$ and $T_{d+1}$ is the homogeneous tree with offspring $d$.
\end{lemma}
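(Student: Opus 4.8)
The plan is to prove the two cases separately, using different structural properties of the Galton--Watson tree in each.

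\textbf{The case $p_0+p_1=0$.} Here every vertex has at least $2$ children and at most $d_{max}$ children, so $\Tr$ contains no leaves and no stretches. First I would observe that the spectral radius is monotone under passing to subtrees that contain the walk's support, by Lemma \ref{lem:specapprox}. Since $d=\min\{i:p_i>0\}\geq 2$, the tree $\Tr$ contains the homogeneous tree $T_{d+1}$ as a subtree in the following sense: every vertex has at least $d$ children, so one can embed a copy of $T_{d+1}$ into $\Tr$. This gives the lower bound $\rho(\Tr)\geq \rho(T_{d+1})=\frac{2\sqrt d}{d+1}$, where the value for the homogeneous tree is classical and computed directly from the return generating function of the SRW on $T_{d+1}$. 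For the matching upper bound, the cleanest route is to use that $\rho$ equals the exponential growth rate of return probabilities and to compare $\Tr$ with $T_{d+1}$ from above: since $\Tr$ has bounded degree $d_{max}+1$, one shows the return probabilities on $\Tr$ decay at least as fast as on the slowest-decaying homogeneous tree compatible with the minimal degree. I would make this precise via a coupling or a flow/electrical-network comparison, or alternatively by invoking the known almost sure identity $\rho(\Tr)=\rho(T_{d+1})$ for Galton--Watson trees with minimal offspring $d$ and no leaves; the key point is that the annealed spectral radius is determined by the minimal degree $d$ when there are no stretches, and $\GW$-a.s.\ the realization attains this value.

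\textbf{The case $p_0+p_1>0$.} Here the claim is $\rho(\Tr)=1$ for $\GW$-almost every realization. The main tool is Lemma \ref{lem:unbranched}: if $\Tr$ contains stretches (unbranched paths) of arbitrary length, then $\rho(\Tr)=1$. So it suffices to show that, conditioned on survival, $\Tr$ $\GW$-a.s.\ contains stretches of every length. If $p_1>0$ this is immediate, since along any infinite ray the independent events that a given vertex has exactly one child occur with probability $p_1>0$ infinitely often; by a Borel--Cantelli / independence argument one finds arbitrarily long runs of degree-$2$ vertices almost surely on the event of nonextinction. If instead $p_1=0$ but $p_0>0$, there are no stretches of internal degree-$2$ vertices, so Lemma \ref{lem:unbranched} does not apply directly; in this subcase I would instead use the isoperimetric characterization (Definition \ref{def:isoperimetricconstant} together with Theorem 10.3 of \cite{woessRW}, $\iota(T)=0\iff\rho(T)=1$). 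The presence of arbitrarily large finite bushes forces the isoperimetric constant to vanish: one produces finite sets $F$ (large subtrees hanging off the backbone) whose boundary-to-volume ratio tends to $0$, because the subcritical bushes have $\E[|\T^{sub}|]<\infty$ but occur infinitely often along any ray and can be arbitrarily large. Establishing that $\iota(\Tr)=0$ then yields $\rho(\Tr)=1$.

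\textbf{Main obstacle.} I expect the delicate point to be the $p_1=0$, $p_0>0$ subcase of the second bullet, where stretches are absent and one must instead exhibit vanishing isoperimetric constant from the fluctuating bush sizes. One has to argue that along the infinite backbone (itself a supercritical leafless tree $\T^*$) there appear, infinitely often and $\GW$-a.s., attached bushes large enough that a test set $F$ consisting of the backbone segment up to that point plus the bush has $|\delta_E F|/\mathrm{Vol}(F)$ as small as desired; controlling this ratio requires quantitative estimates on the tail of $|\T^{sub}|$ and a second-moment or Borel--Cantelli argument along the backbone. The other steps---computing $\rho(T_{d+1})$ explicitly and the comparison inequalities via Lemma \ref{lem:specapprox}---are standard and should go through without surprises.
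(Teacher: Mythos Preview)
Your treatment of the case $p_0+p_1>0$ is essentially the paper's proof: Borel--Cantelli gives arbitrarily long stretches when $p_1>0$ (then Lemma \ref{lem:unbranched} applies), and when $p_1=0$, $p_0>0$ one uses the isoperimetric criterion via arbitrarily large bushes. Your choice of test sets is more elaborate than necessary, though: the paper simply takes $F=B_N$, a bush surviving to generation $N$, for which $|\delta_E B_N|=1$ and $\mathrm{Vol}(B_N)\geq 2N$, so no backbone segment or tail estimate on $|\T^{sub}|$ is needed.

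In the case $p_0+p_1=0$, however, you have the two inequalities swapped, and the argument you give for the lower bound does not work. Embedding $T_{d+1}$ as a subtree of $\Tr$ using that every vertex has at least $d$ children does \emph{not} yield $\rho(\Tr)\geq\rho(T_{d+1})$ via Lemma \ref{lem:specapprox}: the substochastic restriction $P_Y$ to the embedded vertex set has entries $1/\deg_{\Tr}(v)\leq 1/(d+1)$, so $P_Y$ is entrywise dominated by SRW on $T_{d+1}$ and one only gets $\rho(P_Y)\leq\rho(T_{d+1})$, which is useless for a lower bound. (Concretely, $T_3$ embeds in $T_4$, yet $\rho(T_4)<\rho(T_3)$.) In fact the inequality that comes from ``every vertex has at least $d$ children'' is the \emph{upper} bound $\rho(\Tr)\leq\rho(T_{d+1})$; the paper obtains it from Corollary 9.85 in \cite{woess}. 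For the matching lower bound the paper does something different from your embedding: it finds, $\GW$-a.s., arbitrarily large balls in $\Tr$ where every vertex has \emph{exactly} $d$ children (possible since $p_d>0$, by Borel--Cantelli). On such a ball the restricted operator agrees with the restriction of SRW on $T_{d+1}$, and then Lemma \ref{lem:specapprox} gives $\rho(\Tr)\geq\sup_B\rho(P_B)=\rho(T_{d+1})$. Your vague ``coupling or electrical-network comparison'' for the upper bound would need to be replaced by the cited monotonicity result, and your lower-bound step needs the exact-degree regions, not merely a subtree embedding.
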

\begin{proof}
If $\Tr$ is finite, the simple random walk is recurrent and it holds that $\rho(\Tr)=1$, see Section 1 in \cite{woessRW}.
Now, let  us assume that $\Tr$ is  infinite. In the case where $p_1>0$ the tree contains, for every choice of $N\in\N$, $\GW$-a.s.~a stretch of length $N$; this is a consequence of the lemma of Borel--Cantelli.  Using Lemma \ref{lem:unbranched} we conclude that $\rho(\Tr)=1$.
Now, we assume that $p_1=0$ but $p_0>0$. In this case the tree $\Tr$ contains, for every choice of $N\in\N$, a  finite bush of $N$ generations, which we call bush $B_{N}$.  For such a bush $B_N$ it holds that  $\frac{\mid \delta_E B_N\mid}{Vol(B_N)}\leq\frac{1}{2N}$. Again, by finding an arbitrary large bush we obtain $\iota(\Tr)=0$ and consequently using Theorem 10.3 in \cite{woessRW} we conclude $\rho(\Tr)=1$.
 In the case $p_0+p_1=0$  Corollary 9.85 in \cite{woess} implies that $\rho(\Tr)\leq\rho(T_{d+1})=\frac{2\sqrt{d}}{d+1}$  where $d$ is the smallest  offspring of the Galton--Watson tree and $T_{d+1}$ denotes the homogeneous tree with offspring $d$. The remaining equality follows  by finding arbitrarily  large balls of $T_{d+1}$ as copies in $\Tr$ as above  and applying Lemma  \ref{lem:specapprox}.
\end{proof}

We construct a new tree $\widetilde T$ by replacing each edge $e$ of $T$ with a stretch  of length $k=k(e)$. We call $\widetilde T$ a subdivision of $T$ and $\max_{e}\{k(e)\}$ the maximal subdivision length of $\widetilde T$. We write $T_{(N)}$ for the subdivision of $T$ where $k(e)=N$ for all edges $e$ in $T$. 
We state a particular case of Theorem 9.89 in \cite{woess}. 
\begin{theorem}\label{thm:rhoN}
Let $T$ be a locally finite tree and denote $\rho(T)$ (resp.\ $\rho(T_{(N)})$) the spectral radius of the SRW on $T$ (resp.\ $T_{(N)}$). Then, 
\begin{enumerate}
\item[a)] \begin{equation} \rho(T_{(N)})= \cos \frac{\arccos \rho(T)}{N};
\end{equation}
\item[b)] if $\widetilde T$ is an arbitrary subdivision of $T$ of maximal subdivision length $N$ then
\begin{equation}
\rho(T) \leq \rho(\widetilde T) \leq \rho(T_{(N)}).
\end{equation}
\end{enumerate}
\end{theorem}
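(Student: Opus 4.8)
The plan is to reduce the computation of $\rho(T_{(N)})$ to the simple random walk on a single subdivided edge, i.e.\ a finite stretch of degree-$2$ vertices, and to exploit that the relevant first-passage generating functions there satisfy a second-order linear recurrence whose solutions are trigonometric (Chebyshev) in a parameter $\phi$ with $\cos\phi=1/z$. I would first recall the standard fact (see \cite{woess}) that for the SRW on a tree $\rho$ is the reciprocal of the radius of convergence of the first-passage (equivalently Green) functions $F(x,y\mid z)$, and that on a tree these obey the local recursion
\[
F(x,y\mid z)=\frac{z/\deg(x)}{1-(z/\deg(x))\sum_{w\sim x,\,w\neq y}F(w,x\mid z)}.
\]
Consequently $\rho(T_{(N)})$ is detected by the smallest $z>1$ at which this system first becomes singular.

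For part a) I would make the transit through a stretch explicit. Fix a subdivided edge $u_0,u_1,\dots,u_N$ with $u_1,\dots,u_{N-1}$ of degree $2$; making $u_0,u_N$ absorbing, the functions $A_k$ (reach $u_N$ before $u_0$ from $u_k$) solve $A_{k+1}-\tfrac{2}{z}A_k+A_{k-1}=0$ with $A_0=0,\ A_N=1$, so that, writing $\cos\phi=1/z$, one gets $A_k=\sin(k\phi)/\sin(N\phi)$, and analogously for the reflection terms. Substituting these stretch quantities into the branch-vertex recursion above shows that the system for $T_{(N)}$ at parameter $z$ is algebraically identical to the system for $T$, but with the single-edge angle $\arccos(1/\zeta)$ for $T$ replaced by the angle $N\phi=N\arccos(1/z)$. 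Hence the critical parameter $z_c=1/\rho(T_{(N)})$ is reached precisely when $N\arccos(1/z_c)=\arccos\rho(T)$; since $\arccos\rho(T_{(N)})=\arccos(1/z_c)$, this gives $\arccos\rho(T_{(N)})=\tfrac1N\arccos\rho(T)$, i.e.\ $\rho(T_{(N)})=\cos\!\big(\arccos\rho(T)/N\big)$. As a consistency check, for $T=T_{d+1}$ with $d=2$ and $N=2$ the formula returns $\tfrac{\sqrt2+1}{\sqrt6}$, the known spectral radius of the biregular tree $T_{3,2}$ obtained by subdivision.

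For part b) both inequalities follow from the monotonicity of $\rho$ under lengthening edges: further subdividing an edge cannot decrease the spectral radius. I would derive this from Lemma~\ref{lem:specapprox}, which gives $\rho=\sup_Y\rho(P_Y)$ over finite irreducible $Y$ together with the strict monotonicity $\rho(P_F)<\rho(P_G)$ for $F\subsetneq G$. Any finite piece $Y\subset\widetilde T$ can be matched, edge by edge, with a piece $Y'$ of $T_{(N)}$ obtained by extending each of its stretches to length $N$, and a Perron--Frobenius comparison (\cite{Seneta}) of the two finite substochastic transition matrices yields $\rho(P_Y)\le\rho(P_{Y'})$; taking suprema gives $\rho(\widetilde T)\le\rho(T_{(N)})$. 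The lower bound $\rho(T)=\rho(T_{(1)})\le\rho(\widetilde T)$ is the same comparison run in the opposite direction, using that every edge of $\widetilde T$ has length at least $1$.

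The main obstacle I anticipate is twofold. First, in part a) one must justify carefully that the effective-parameter substitution is exactly the cosine map and that the radius of convergence of the infinite system is pinned down by the same branching condition as for $T$, rather than by a spurious zero of $\sin(N\phi)$ coming from the bare stretch; identifying which singularity is the relevant one is the delicate point. Second, the edge-by-edge Perron--Frobenius comparison in part b) for non-uniform subdivisions must be arranged so that lengthening one stretch while freezing the rest provably raises the leading eigenvalue of every finite truncation, uniformly along an exhausting sequence; establishing this monotonicity is the technical heart of the sandwiching.
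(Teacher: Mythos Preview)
The paper does not give its own proof of Theorem~\ref{thm:rhoN}; it simply quotes it as a particular case of Theorem~9.89 in \cite{woess}. So there is no in-paper argument to compare against, only the cited reference.

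Your strategy for part a) is precisely the one used in \cite{woess}: express the first-passage generating functions along a stretch via the Chebyshev-type identities $F_N(1,N\mid 1/\cos\phi)=\sin\phi/\sin(N\phi)$ (these are the formulas recorded in Lemma~\ref{lem:F} of the paper), substitute them into the one-step recursion at a branch vertex, and observe that the resulting system for $T_{(N)}$ at angle $\phi$ coincides with the system for $T$ at angle $N\phi$. Your remark about the possible spurious pole at $\sin(N\phi)=0$ is exactly the point one has to check, and it is handled in the reference by noting that $\arccos\rho(T)\in[0,\pi/2]$ so that $N\phi$ stays below $\pi$ at the critical parameter. Your numerical check on $T_3$ is correct.

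For part b) your overall idea---reduce to the statement ``subdividing a single edge cannot decrease $\rho$'' and iterate---is right, and is again what \cite{woess} does. However, the specific mechanism you propose, matching a finite piece $Y\subset\widetilde T$ with a piece $Y'\subset T_{(N)}$ and invoking Perron--Frobenius, is not quite straight: $Y$ is not a submatrix of $Y'$ (the stretch vertices are different), so Lemma~\ref{lem:specapprox} does not apply directly. The clean route is to observe that $T_{(N)}$ is itself a subdivision of $\widetilde T$ (each length-$k(e)$ stretch is further subdivided to length $N$), so it suffices to prove monotonicity under subdividing one edge; this one-edge step follows from the same generating-function substitution as in part a), since replacing $\phi$ by $\phi/2$ at one edge only pushes the singularity of the branch-vertex system outward. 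That is the argument in \cite{woess}, and it avoids the bookkeeping you flag in your last paragraph.
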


\subsection{Absorbing BMC on finite paths}\label{sec:absBMC}
We consider the SRW, $(Z_{n})_{n\geq 0},$ on an unbranched path of length $N$ with absorbing states $v_{0}$ and $v_{N}$. In other words, we consider the ruin problem (or birth-death chain) on $[N]:=\{0,1,\ldots, N\}$ defined through the transition kernel $P_{N}=( p_{N}(x,y))_{x,y\in [N]}$: $p_{N}(0,0)=p_{N}(0,N)=1$ and $p_{N}(x,x+1)=p_{N}(x,x-1)=1/2$ for $1 \leq x \leq N-1$. We set $\rho(P_{N})$ for the spectral radius of the reducible class $\{1,\ldots, N-1\}$. Let 
\begin{equation}
f^{(n)}_{N}(x,y) := \Prob[Z_{n}=y, Z_{k}\neq y~\forall~ 0\leq k<n | Z_{0}=x]
\end{equation}
and define the first visit generating function
\begin{equation}
F_{N}(x,y |z):= \sum_{n=0}^{\infty} f^{(n)}_{N}(x,y) z^{n}. \label{def:generatingfunc}
\end{equation}
The convergence radius of the power series equals $R_{N}=1/\rho(P_{N})$.

We give the following expressions of the generating function $F_{N}$ for two particular pairs of values of $x$ and $y$, see Example 5.6 in \cite{woess}.

\begin{lemma}\label{lem:F}
Let $1\leq z \leq R_{N}$ and $\varphi$ such that $1/z=\cos \varphi$. Then,
\begin{equation}
F_{N}\left(1,N \middle\vert  \frac{1}{\cos\varphi}\right)= \frac{\sin  \varphi}{\sin N \varphi} \mbox{ and } F_{N}\left(N-1,N \middle\vert  \frac{1}{\cos\varphi}\right)= \frac{\sin (N-1) \varphi}{\sin N \varphi}.
\end{equation}
\end{lemma}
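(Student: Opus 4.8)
The plan is to compute the first-visit generating function $u_x := F_N(x,N\mid z)$ by first-step analysis and then to solve the resulting linear recurrence in closed form. Conditioning the walk $(Z_n)$ on its first step, and using that an interior site $1\le x\le N-1$ sends mass $1/2$ to each of $x\pm1$, the defining power series reorganise into
\[
u_x=\frac z2\,\bigl(u_{x-1}+u_{x+1}\bigr),\qquad 1\le x\le N-1,
\]
an identity valid for $1\le z< R_N$. The boundary data are dictated by the absorbing structure of $P_N$: started at $N$ the first visit to $N$ occurs at time $0$, so $u_N=1$, while $0$ is absorbing and $N$ is never reached from it, so $u_0=0$.

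First I would solve the homogeneous recurrence through its characteristic equation $z\lambda^2-2\lambda+z=0$, with roots $\lambda_\pm=(1\pm\sqrt{1-z^2})/z$. The decisive step is the substitution $1/z=\cos\varphi$ supplied by the statement: then $1-z^2=-\tan^2\varphi$, hence $\sqrt{1-z^2}=i\tan\varphi$ and
\[
\lambda_\pm=\cos\varphi\,(1\pm i\tan\varphi)=\cos\varphi\pm i\sin\varphi=e^{\pm i\varphi}.
\]
Consequently every solution has the form $u_x=C\cos(x\varphi)+D\sin(x\varphi)$. Imposing $u_0=0$ forces $C=0$, and then $u_N=1$ fixes $D=1/\sin(N\varphi)$, so that
\[
u_x=\frac{\sin(x\varphi)}{\sin(N\varphi)},\qquad 0\le x\le N.
\]
Evaluating at $x=1$ and $x=N-1$ delivers the two asserted identities.

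The algebra here is light; the point that needs care is the analytic bookkeeping. I would verify that the first-step decomposition holds as an identity of convergent series on the relevant range, and that the denominator $\sin(N\varphi)$ does not vanish there. Since $\rho(P_N)=\cos(\pi/N)$ and $R_N=1/\rho(P_N)$, the range $z\in(1,R_N)$ corresponds to $\varphi\in(0,\pi/N)$, on which $\sin(x\varphi)>0$ for $1\le x\le N$ and the formula is well defined; at $z=R_N$ one has $\varphi=\pi/N$ and $\sin(N\varphi)=0$, matching the fact that $R_N$ is exactly the radius of convergence of $F_N$. The two remaining boundary values of $z$ are then obtained by continuity, the limit $\varphi\to0$ at $z=1$ reproducing the classical gambler's-ruin probability $u_x=x/N$.
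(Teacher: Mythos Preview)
Your proof is correct: the first-step decomposition together with the boundary data $u_0=0$, $u_N=1$ yields a linear recurrence whose characteristic roots become $e^{\pm i\varphi}$ under the substitution $z=1/\cos\varphi$, and the solution $u_x=\sin(x\varphi)/\sin(N\varphi)$ follows. The paper does not actually supply its own proof of this lemma but merely cites Example~5.6 in Woess, so there is nothing to compare against; your argument is the standard derivation and matches what one finds in that reference.
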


We present lower bounds of these generating functions; the index shift is done to improve the presentation of the proofs in the main part.

\begin{lemma}\label{lem:Fapprox}
Let $1\leq z \leq R_{N}$ and $\varphi$ such that $1/z=\cos \varphi$. Then, \begin{equation}
F_{N+1}\left( N, N+1 \middle \vert  \frac{1}{\cos\varphi}\right) \geq \frac{N}{N+1} \left(1+ \frac{ \left(1+2N\right)\varphi^{2}}{3!}\right),
\end{equation}
\begin{equation}
F_{N+1}\left( 1, N+1 \middle \vert  \frac{1}{\cos\varphi}\right) \geq \frac{1}{N+1} \left(1+ \frac{ \left(2N+N^2\right)\varphi^{2}}{3!}\right).
\end{equation}
\end{lemma}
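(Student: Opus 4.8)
The plan is to reduce the lemma to the two closed-form evaluations already available and then to a single elementary trigonometric inequality. First I would apply Lemma \ref{lem:F} with $N$ replaced by $N+1$, which gives, for $1/z = \cos\varphi$,
\[
F_{N+1}\!\left(1, N+1 \,\middle|\, \tfrac{1}{\cos\varphi}\right) = \frac{\sin\varphi}{\sin(N+1)\varphi}, \qquad F_{N+1}\!\left(N, N+1 \,\middle|\, \tfrac{1}{\cos\varphi}\right) = \frac{\sin N\varphi}{\sin(N+1)\varphi}.
\]
Both desired bounds are then special cases of the claim that, for integers $0 < a < b$ and $0 \le \varphi < \pi/b$,
\[
\frac{\sin a\varphi}{\sin b\varphi} \ge \frac{a}{b}\left(1 + \frac{(b^2 - a^2)\varphi^2}{6}\right),
\]
taking $(a,b) = (N, N+1)$ (so $b^2 - a^2 = 2N+1$) for the first inequality and $(a,b) = (1, N+1)$ (so $b^2 - a^2 = N^2 + 2N$) for the second; recall $3! = 6$.

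The proof of this claim I would base on the Euler product $\frac{\sin x}{x} = \prod_{n \ge 1}\left(1 - \frac{x^2}{n^2\pi^2}\right)$. Writing $\sin a\varphi = a\varphi \cdot \frac{\sin a\varphi}{a\varphi}$ and similarly for $b\varphi$, the ratio factors as
\[
\frac{\sin a\varphi}{\sin b\varphi} = \frac{a}{b}\prod_{n \ge 1} \frac{1 - \frac{a^2\varphi^2}{n^2\pi^2}}{1 - \frac{b^2\varphi^2}{n^2\pi^2}}.
\]
Since $0 \le \varphi < \pi/b$ forces $0 \le \frac{b^2\varphi^2}{n^2\pi^2} < 1$ for every $n$, each factor is well defined and, because $a < b$,
\[
\frac{1 - \frac{a^2\varphi^2}{n^2\pi^2}}{1 - \frac{b^2\varphi^2}{n^2\pi^2}} = 1 + \frac{(b^2 - a^2)\varphi^2/(n^2\pi^2)}{1 - \frac{b^2\varphi^2}{n^2\pi^2}} \ge 1 + \frac{(b^2 - a^2)\varphi^2}{n^2\pi^2},
\]
where the last step uses that the denominator lies in $(0,1]$. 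Applying the Weierstrass product inequality $\prod_n (1 + c_n) \ge 1 + \sum_n c_n$ for nonnegative $c_n$, together with the Basel sum $\sum_{n \ge 1} \frac{1}{n^2\pi^2} = \frac{1}{6}$, yields
\[
\prod_{n \ge 1} \frac{1 - \frac{a^2\varphi^2}{n^2\pi^2}}{1 - \frac{b^2\varphi^2}{n^2\pi^2}} \ge 1 + \frac{(b^2 - a^2)\varphi^2}{6},
\]
which is exactly the claim.

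Finally I would record that the hypothesis $1 \le z \le R_{N+1}$, i.e.\ $\cos\varphi \ge \rho(P_{N+1}) = \cos\frac{\pi}{N+1}$, is equivalent to $0 \le \varphi \le \frac{\pi}{N+1}$, so that $b\varphi = (N+1)\varphi \le \pi$ and the product argument applies on the open range $\varphi < \pi/(N+1)$; at the endpoint $\varphi = \pi/(N+1)$ one has $\sin(N+1)\varphi = 0$ and both generating functions equal $+\infty$, so the bounds hold trivially. The only point demanding care — the main obstacle — is ensuring the single factor $1 - \frac{b^2\varphi^2}{\pi^2}$ (the $n=1$ term) stays positive, which is precisely why the admissible range of $\varphi$ must be tied to $\pi/(N+1)$ rather than to $\pi/N$; this is the step where the index shift from $F_N$ to $F_{N+1}$ matters and explains why the generating functions are stated with the shifted index.
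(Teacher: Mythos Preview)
Your proof is correct and follows essentially the same route as the paper: apply Lemma~\ref{lem:F} with $N\mapsto N+1$, expand both sines via the Euler product, and bound each factor $\frac{1-a^2\varphi^2/(n^2\pi^2)}{1-b^2\varphi^2/(n^2\pi^2)}$ from below by $1+\frac{(b^2-a^2)\varphi^2}{n^2\pi^2}$. The only difference is in the last step: the paper recognizes $\prod_{n\ge1}\bigl(1+\tfrac{(b^2-a^2)\varphi^2}{n^2\pi^2}\bigr)$ as $\frac{\sin(i\varphi\sqrt{b^2-a^2})}{i\varphi\sqrt{b^2-a^2}}$ and then truncates its Taylor series, whereas you use the Weierstrass inequality $\prod(1+c_n)\ge 1+\sum c_n$ together with the Basel sum $\sum_{n\ge1}\frac{1}{n^2\pi^2}=\tfrac16$; both yield the identical bound $1+\tfrac{(b^2-a^2)\varphi^2}{6}$, and your version is arguably the cleaner of the two.
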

\begin{proof}
For proving the above approximations we will use the infinite product expansion \[\sin(z)=z \prod_{n=1}^{\infty} \left(1-\frac{z^2}{n^2\pi^2}\right), \, z\in \mathbb{C}\] and the power series expansion  
\[\sin(z)=\sum_{n=0}^{\infty} \frac{(-1)^{n} z^{2n+1}}{(2n+1)!},\, z\in \mathbb{C}\]
of the sine. Now, 
\begin{align}
F_{N}\left( N, N+1 \middle \vert  \frac{1}{\cos\varphi}\right) & =\frac{\sin (N) \varphi}{\sin (N+1) \varphi}= \frac{N\varphi}{(N+1) \varphi}\frac{\prod_{n=1}^{\infty} \left(1-\frac{(N\varphi)^2}{n^2 \pi^2}\right)}{\prod_{n=1}^{\infty} \left(1-\frac{((N+1)\varphi)^2}{n^2 \pi^2}\right)}\\
&=\frac{N}{N+1}\frac{\prod_{n=1}^{\infty} \left(1-\frac{N^2\varphi^2+2N\varphi^2+\varphi^2}{n^2 \pi^2}+\frac{2N\varphi^2+\varphi^2}{n^2\pi^2}\right)}{\prod_{n=1}^{\infty} \left(1-\frac{N^2\varphi^2+2N\varphi^2+\varphi^2}{n^2 \pi^2}\right)}\\
&\geq \frac{N}{N+1}\left( \prod_{n=1}^{\infty} \left(1+\frac{2N\varphi^2+\varphi^2}{n^2\pi^2}\right) \right)\, .
\end{align}
Defining $z=i\varphi\sqrt{1+2N}$ we obtain by using the product expansion and afterwards the power series expansion, that
\begin{align*}
\frac{N}{N+1}\left( \prod_{n=1}^{\infty} \left(1+\frac{2N\varphi^2+\varphi^2}{n^2\pi^2}\right) \right) &= \frac{N}{N+1}\frac{\sin(z)}{z} \\
&\geq \frac{N}{N+1}\left(1+\frac{\varphi^2(1+2N)}{3!}\right)\, .
\end{align*}
The second part follows the exact same line as the first part of the proof. 
\end{proof}

 {\textbf{Acknowledgement:} We want to express our great appreciation to Wolfgang Woess for many helpful discussions. Our thanks are extended to Nina Gantert for advice and discussions. We would also like to thank Stefan Lendl for his generous support in all numerical issues and Ecaterina Sava-Huss for raising the idea for this project.  We are grateful to Marcus Michelen and Josh Rosenberg for pointing out a mistake in the treatment of ``bushes, no stretches'' in a previous version. We also want to thank the anonymous referees whose comments lead to a considerable improvement of this paper. 

The second author was supported by the Austrian Science Fund (FWF): W1230. Grateful acknowledgment is made for hospitality from the Institute of Discrete Mathematics of TU Graz, where the research was carried out during the first author's visits.  }
\appendix

\bibliographystyle{abbrv}

\bibliography{literatur}

\bigskip
\noindent \begin{minipage}{0.48\textwidth}
Sebastian M\"{u}ller\newline
Aix Marseille Universit\'e\newline
CNRS,  Centrale Marseille\newline
I2M\newline
UMR 7373\newline 13453 Marseille, France\newline
\texttt{sebastian.muller@univ-amu.fr}
\end{minipage}
\hfill
\begin{minipage}{0.48\textwidth}
Gundelinde Maria Wiegel \newline
Institute of Discrete Mathematics,\newline
Graz University of Technology\newline
Steyrergasse 30,\newline
8010 Graz, Austria\newline
\texttt{wiegel@math.tugraz.at}
\end{minipage}

\end{document}